\title{On Converse Theorems for Hilbert Modular Forms Assuming Unramified Twists}
\author{Pengcheng Zhang}
\dedicatory{Dedicated to the memory of Ioan James (1928--2025)}
\address{Max Planck Institute for Mathematics, Vivatsgasse 7, 53111 Bonn, Germany}
\email{pzhang@mpim-bonn.mpg.de}
\date{03 November 2025}
\subjclass[2020]{11F41, 11F66}
\keywords{Hilbert modular forms, converse theorems, $L$-functions}
\theoremstyle{plain}
\newtheorem{theorem}{Theorem}[section]
\newtheorem{lemma}[theorem]{Lemma}
\newtheorem{prop}[theorem]{Proposition}
\newtheorem{corol}[theorem]{Corollary}
\theoremstyle{remark}
\newtheorem{rmk}{Remark}[section]
\numberwithin{equation}{section}
\newcommand\m[1]{\begin{pmatrix}#1\end{pmatrix}}
\newcommand\bm[1]{\begin{bmatrix}#1\end{bmatrix}}
\newcommand\sm[1]{\begin{psmallmatrix}#1\end{psmallmatrix}}
\newcommand\mbf[1]{\mathbf{#1}}
\newcommand\cl[1]{\mathcal{#1}}
\newcommand\fr[1]{\mathfrak{#1}}
\newcommand\ov[1]{\overline{#1}}
\newcommand\wt[1]{\widetilde{#1}}
\newcommand\wh[1]{\widehat{#1}}
\newcommand{\rat}{\mathbb{Q}}
\newcommand{\inte}{\mathbb{Z}}
\newcommand{\real}{\mathbb{R}}
\newcommand{\comp}{\mathbb{C}}
\newcommand{\ade}{\mathbb{A}}
\newcommand{\pos}{\mathbb{Z}^+}
\newcommand{\eps}{\varepsilon}
\newcommand{\vphi}{\varphi}
\newcommand{\clh}{\mathcal{H}}
\newcommand{\clo}{\mathcal{O}}
\newcommand{\bslash}{\backslash}
\newcommand{\simarrow}{\xrightarrow{\;\sim\;}}
\newcommand{\eq}{\;=\;}
\newcommand{\deq}{\;:=\;}
\newcommand{\bone}{\mathbf{1}}
\def\thin{{\hskip 1pt}}
\newcommand{\hash}{\texttt{\#}}
\newcommand{\Hom}{\mathrm{Hom}}
\newcommand{\N}{\mathrm{N}}
\newcommand{\GL}{\mathrm{GL}}
\newcommand{\PGL}{\mathrm{PGL}}
\newcommand{\SL}{\mathrm{SL}}
\newcommand{\tr}{\mathrm{tr}}
\newcommand{\Gal}{\mathrm{Gal}}
\newcommand{\lcm}{\mathrm{lcm}}
\newcommand{\val}{\mathrm{val}}
\newcommand{\Cl}{\mathrm{Cl}}
\newcommand{\sgn}{\mathrm{sgn}}
\newcommand{\dif}{\mathrm{d}}
\renewcommand{\Re}{\operatorname{Re}}
\renewcommand{\Im}{\operatorname{Im}}
\newcommand{\SG}{\mathrm{S}\Gamma}
\begin{document}

\begin{abstract}
We prove two results on converse theorems for Hilbert modular forms over totally real fields of degree $r>1$. The first result recovers a Hilbert modular form (of some level) from an $L$-series satisfying functional equations twisted by all the unramified Hecke characters. The second result assumes both the `unramified' functional equations and an Euler product, and recovers a Hilbert modular form of the expected level predicted by the shape of the functional equations. Our result generalizes the current converse theorems for $\GL_2$ in the case of Hilbert modular forms in that we completely remove the assumptions on ramified twists.
\end{abstract}

\maketitle

\setcounter{tocdepth}{1}
\tableofcontents

\section{Introduction and the main result}

\subsection{Background and introduction}

The study of converse theorems, i.e., recovering automorphic forms from $L$-series, dates back to Hecke \cite{hecke}. He proved an equivalence between $L$-series satisfying a suitable functional equation and functions invariant under the element $\sm{0&-1\\1&0}$, which is then sufficient to recover elliptic modularity of level $1$. Hecke's work was generalized by Weil \cite{weil}, who recovered elliptic modularity of general level $N$ by assuming functional equations twisted by all the Dirichlet characters. Later in \cite{conreyfarmer}, Conrey and Farmer turned to another direction of generalization by imposing an Euler product to the $L$-series and recovered elliptic modularity for a few small levels. 

Converse theorems for Hilbert modular forms were first investigated by Doi--Naganuma \cite{doinaganuma} and Naganuma \cite{naganuma} when they discovered a lifting from elliptic modular forms to Hilbert modular forms, nowadays known as the Doi--Naganuma lifting. To justify the modularity of the image of the lifting, they proved a converse theorem for Hilbert modular forms of level~$1$, but subject to the base field being norm-Euclidean. In fact, subject to the class number $1$ condition, they proved an equivalence between $L$-series satisfying functional equations twisted by all the unramified Hecke characters and functions invariant under the element $\sm{0&-1\\1&0}$, but they could not justify that $\sm{0&-1\\1&0}$ and the translation matrices always generate the full Hilbert modular group. This generating result was only later proved by Vaserstein \cite{vaserstein}.

In the case of general number fields, it is well-known that Jacquet and Langlands \cite{jacquetlanglands} proved a converse theorem for $\GL_2$ in the style of Weil, i.e., assuming functional equations twisted by all the Hecke characters. 

The goal of this article is to recover Hilbert modularity from $L$-series satisfying
\begin{enumerate}[label=(\arabic*)]
    \item suitable functional equations twisted by all the \textbf{unramified} Hecke characters, and
    \item a suitable (partial) Euler product.
\end{enumerate}
We will prove that one can already recover a Hilbert modular form (of some level) by assuming all the `unramified' functional equations, and together with the Euler product, one can recover a Hilbert modular form of the expected level predicted by the shape of the functional equations. Our result can be viewed as a generalization or partial extension of many previous results, including \cite[Section~3]{doinaganuma}, \cite{p-s}, \cite{bookerkrish-strengthen}, \cite{bookerkrish-refine}, and \cite{bookerkrish-gln} (see \Cref{mainthmrmk} for a detailed discussion).

Roughly speaking, we will associate the $L$-series $L(s)$ with an $h$-tuple $\mbf{f}=(f_1,\ldots,f_h)$ of holomorphic functions on $\clh^r$ whose $L$-function equals $L(s)$, and prove that $\mbf{f}$ is a Hilbert modular form by showing that each component $f_\lambda$ is a classical Hilbert modular form (see \Cref{hilbert-modular-form-section} for the definitions). Here $h$ is the narrow class number of the base field and $r>1$ is the degree of the base field. The proof can be divided into four steps.
\begin{enumerate}[leftmargin=1.5cm, label=Step \arabic*.]
    \item Modify the proof in \cite[Section~3]{doinaganuma} to deduce from the `unramified' functional equations of $L(s)$ that $\mbf{f}$  is an eigenfunction of the `Fricke involution' (\Cref{fqnsection}).
    \item With $\mbf{f}$ now being an eigenfunction of the `Fricke involution', use the main theorem of Vaserstein in \cite{vaserstein} to deduce that each $f_\lambda$ is already invariant under a congruence subgroup of a larger level, which proves the first part of \Cref{mainthm} (\Cref{generating-sect,gamma1-inv}).
    \item Use the particular shape of the Euler factor of $L(s)$ at a certain prime to deduce that each $f_\lambda$ is an eigenfunction of the `Hecke operator at infinity' at that prime (\Cref{eulerprodsection}).
    \item With $f_\lambda$ now being a `Hecke eigenfunction' at those certain primes, use the idea from \cite[Theorem~2.2]{conjhecke} to extend the congruence subgroup that $f_\lambda$ is invariant under to the `correct' one predicted by the shape of the functional equations, which proves the second part of \Cref{mainthm} (\Cref{gamma0-inv-sect}).
\end{enumerate}

We would like to point out that there is a small but easily fixable gap in the proof of \cite[Theorem~2.2]{conjhecke}. We will also provide our own fix at the end of \Cref{gamma0-inv-sect}. For the coherence of the discussion, we postpone certain lemmas on prime ideals to the appendix.

\subsection{Setup and the main result}
\label{statement}
Let $F$ be a totally real field of degree $r>1$ and let $\clo_F$ be the ring of integers. Let $\sigma_1,\ldots,\sigma_r:F\rightarrow\real$ be the real embeddings of $F$. For $\alpha\in F$, write $\alpha^{(j)}=\sigma_j(\alpha)$, and $\alpha$ is called \textit{totally positive} if $\alpha^{(j)}>0$ for all $1\leq j\leq r$. For any set $E\subseteq F$, let $E_+$ denote the set of totally positive elements in~$E$. In particular, we write $\clo_+=(\clo_F)_+$ and $\clo^\times_+=(\clo_F^\times)_+$ for convenience. Let $\cl{I}_F$ denote the set of nonzero integral ideals of $\clo_F$ and let $\fr{d}$ denote the different of $F$. Let $\N$ denote both the norm map $\N_{F/\rat}$ on the elements of $F$ and the norm map on the (fractional) ideals of $F$. Let $\Cl^+(F)$ denote the narrow class group of $F$ and let $h=|\Cl^+(F)|$ denote the narrow class number of $F$. For any nonzero (fractional) ideal $\fr{a}$ of~$F$, let $[\fr{a}]$ denote the equivalence class of $\fr{a}$ in $\Cl^+(F)$. We will view each equivalence class in $\Cl^+(F)$ as a subset of $\cl{I}_F$. Let $\{\fr{t}_1,\ldots,\fr{t}_h\}\subseteq\cl{I}_F$ be a fixed set of representatives of $\Cl^+(F)$. 

We will give a brief discussion of Hecke characters and the classical formulation of Hilbert modular forms (of non-parallel weight) in \Cref{prelims}. Throughout the article, `unramified Hecke characters' mean Hecke characters that are unramified at every finite places. For convenience, we will use the notations in \Cref{prelims} without defining them here.

Let $\{A(\fr{a})\}_{\fr{a}\in \cl{I}_F}$ be a sequence of complex numbers indexed by nonzero integral ideals of $\clo_F$ such that $A(\clo_F)=1$ and $A(\fr{a})=O(\N(\fr{a})^c)$ for some constant $c>0$. Define an $L$-series
\begin{align}
\label{L-series-of-A-def}
    L(s)\eq L(s,A)\deq\sum_{\fr{a}\in\cl{I}_F}A(\fr{a})\cdot \N(\fr{a})^{-s}.
\end{align}
We will usually omit the $A$ in $L(s,A)$ and in the later notations when there is no confusion, as we think of $\{A(\fr{a})\}_{\fr{a}\in \cl{I}_F}$ as a fixed sequence.

For a nonzero integral ideal $\fr{m}$ of $\clo_F$, define
\begin{align*}
    K_0(\fr{m})&\deq\bigg\{\m{a&b\\c&d}\in\GL_2(\wh{\clo_F})\;\bigg|\;c\in\fr{m}\bigg\} \\
    K_1(\fr{m})&\deq\bigg\{\m{a&b\\c&d}\in\GL_2(\wh{\clo_F})\;\bigg|\;c\in\fr{m},\;d-1\in\fr{m}\bigg\},
\end{align*}
where $\wh{\clo_F}:=\varprojlim_{\fr{t}}\clo_F/\fr{t}=\prod_{\fr{p}}\clo_{F,\fr{p}}$.

Now, fix a nonzero integral ideal $\fr{n}$ of $\clo_F$ and an $r$-tuple $\mbf{k}=(k_1,\ldots,k_r)\in(2\inte^+)^r$ of positive even integers. Write $k_0=\max_{1\leq j\leq r}k_j$, $k_j^\prime=k_0-k_j$, and $\mbf{k}^\prime=(k_1^\prime,\ldots,k_r^\prime)$. For each unramified Hecke character~$\psi$ of $F$, define a twisted $L$-series
\begin{align*}
L(s,A,\psi)\deq\sum_{\fr{a}\in\cl{I}_F}\psi(\fr{a})\cdot A(\fr{a})\cdot \N(\fr{a})^{-s}
\end{align*}
and define the completed $L$-series as \footnote{One can \textit{a priori} define a completed $L$-series for any choices of $\fr{n}$ and $\mbf{k}$, but here $\fr{n}$ and $\mbf{k}$ should be viewed as part of the given data determined by the shape of the functional equations in \Cref{mainthm}.}
\begin{align*}
    \wh{L}(s,A,\psi)\deq(2\pi)^{-rs}\; \N(\fr{n}\fr{d}^2)^{s/2}\thin\prod_{j=1}^r\Gamma\bigg(s-\frac{k_j^\prime}{2}+\nu_j(\psi)\bigg)\thin L(s,A,\psi),
\end{align*}
where we will define $\nu_j(\psi)$ in \Cref{hecke-char-sect}.

\begin{theorem}
\label{mainthm}
Let $F$ be a totally real field of degree $r>1$. Let $\fr{n}$ be a nonzero integral ideal of~$\clo_F$, $\mbf{k}\in(2\inte^+)^r$, and $\epsilon=\pm1$. Let $\{\fr{t}_1,\ldots,\fr{t}_h\}$ be any set of representatives of $\Cl^+(F)$ and let $\fr{r}=\fr{t}_1\fr{t}_2\cdots\fr{t}_h$.

Suppose that for all the unramified Hecke characters $\psi$ of $F$, all the functions $\wh{L}(s,A,\psi)$ have holomorphic continuation to the whole complex plane which are bounded in vertical strips and satisfy functional equations
\begin{align*}
    \wh{L}(s,A,\psi)\eq \epsilon \thin i^{-|\mbf{k}|}\;\psi(\fr{n}\fr{d}^2)\;\wh{L}(k_0-s,A,\ov{\psi}),
\end{align*}
where $|\mbf{k}|=\sum_{j=1}^rk_j$. Then, $L(s)$ is the $L$-function of a Hilbert modular form of weight $\mbf{k}$ and level $K_1(\fr{r}\fr{n})$.

Suppose moreover that the $L$-series $L(s)$ has an Euler product of the form
\begin{align*}
    L(s)\eq\prod_{\fr{p}\text{ prime}}L_\fr{p}(s),
\end{align*}
where for $\fr{p}\nmid\fr{n}\fr{d}^2$,
\begin{align*}
    L_\fr{p}(s)\eq\big(1-A(\fr{p})\N(\fr{p})^s+\N(\fr{p})^{k_0-1-2s}\big)^{-1}.
\end{align*}
Then, $L(s)$ is the $L$-function of a Hilbert modular form of weight $\mbf{k}$ and level $K_0(\fr{n})$.
\end{theorem}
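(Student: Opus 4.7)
The plan is to follow the four-step roadmap sketched by the author. First I would construct from the coefficient sequence $\{A(\fr{a})\}$ an $h$-tuple $\mbf{f} = (f_1, \ldots, f_h)$ of holomorphic functions on $\cl{H}^r$: for each $1 \leq \lambda \leq h$, the function $f_\lambda$ is given by a $q$-expansion whose $\xi$-th Fourier coefficient (for totally positive $\xi \in \fr{t}_\lambda^{-1}\fr{d}^{-1}$) is essentially $A(\xi\fr{t}_\lambda\fr{d})$, normalized so that summing over narrow classes reconstructs $L(s,A)$. The growth bound $A(\fr{a}) = O(\N(\fr{a})^c)$ ensures absolute convergence of each $f_\lambda$ on $\cl{H}^r$, and each $f_\lambda$ is tautologically invariant under the corresponding translation subgroup. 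The task is then to upgrade $\mbf{f}$ to a classical Hilbert modular form (in the sense of \Cref{prelims}) of the claimed level.

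For the first assertion I would mimic \cite[Section~3]{doinaganuma}. Applying Mellin inversion to each $\wh{L}(s, A, \psi)$ expresses certain linear combinations of values of the $f_\lambda$ as contour integrals; shifting contours and applying the functional equation, then summing over unramified Hecke characters $\psi$ and using orthogonality over $\Cl^+(F)$ to isolate individual classes, translates the twisted functional equations into a "Fricke involution" identity $\mbf{f}\, |_\mbf{k}\, W = \epsilon \mbf{f}$ for a Fricke matrix $W$ attached to the ideal $\fr{m}\fr{n}\fr{d}^2$ (Step~1, \Cref{fqnsection}). Combining this Fricke invariance with the translation invariance, the main theorem of Vaserstein \cite{vaserstein} shows that the generated group is already the full $\Gamma_1$-type congruence subgroup corresponding to $K_1(\fr{m}\fr{n})$ for each class representative; growth at cusps is controlled by boundedness of the twisted $L$-series in vertical strips, so $\mbf{f}$ is a Hilbert modular form of weight $\mbf{k}$ and level $K_1(\fr{m}\fr{n})$ (Step~2, \Cref{generating-sect,gamma1-inv}).

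For the second assertion I would extract from the Euler factor at a prime $\fr{p} \nmid \fr{n}\fr{d}^2$ the standard Hecke recursion
\begin{align*}
A(\fr{p}\fr{a}) \eq A(\fr{p})\,A(\fr{a}) - \N(\fr{p})^{k_0-1}A(\fr{a}\fr{p}^{-1}),
\end{align*}
with the convention that $A(\fr{a}\fr{p}^{-1}) = 0$ if $\fr{p} \nmid \fr{a}$. Passed through the dictionary between $L$-coefficients and Fourier coefficients, this forces $\mbf{f}$ to be an eigenfunction of the Hecke operator $T_\fr{p}$ with eigenvalue $A(\fr{p})$ for every $\fr{p} \nmid \fr{n}\fr{d}^2$ (Step~3, \Cref{eulerprodsection}). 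Finally, adapting \cite[Theorem~2.2]{conjhecke}, I would argue that for each prime $\fr{p} \mid \fr{m}$ with $\fr{p} \nmid \fr{n}$, the Hecke eigen-equation at $\fr{p}$ combined with the already established $K_1(\fr{m}\fr{n})$-invariance produces additional matrices under which $\mbf{f}$ is invariant, which together exhibit $\mbf{f}$ as invariant under a congruence subgroup corresponding to $K_1(\fr{m}\fr{p}^{-1}\fr{n})$. Iterating this descent over the primes of $\fr{m}$ lands one at $K_1(\fr{n})$, and since Hecke eigen-ness at all primes away from $\fr{n}$ rigidifies the nebentypus to be trivial, this yields $K_0(\fr{n})$-invariance (Step~4, \Cref{gamma0-inv-sect}).

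The principal obstacle is Step~4. Because $h$ may exceed $1$, the Hecke operators at primes dividing $\fr{m}$ permute the components $f_\lambda$ across narrow ideal classes, so the descent has to be carried out compatibly with the diamond action and with the adelic reformulation of classical Hilbert modular forms from \Cref{prelims}; moreover, as the author notes, a small gap in \cite[Theorem~2.2]{conjhecke} must be filled, and the arithmetic lemmas on prime ideals in the appendix are presumably needed to ensure that enough primes $\fr{p} \mid \fr{m}$ have the desired class-group properties to make the induction go through. A secondary technical issue appears in Step~1, where the orthogonality summation over the infinite family of unramified Hecke characters has to be performed compatibly with the contour shifts and the bounds-in-vertical-strips hypothesis, but this should be a direct adaptation of the classical Hecke--Weil--Doi--Naganuma strategy.
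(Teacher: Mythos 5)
Your outline of Steps 1--3 matches the paper's strategy in broad strokes, with two refinements the paper makes that matter later. In Step~1 the Fricke identity is component-wise, $f_\lambda\,|_\mbf{k}\,W_{q_\lambda}=\epsilon\, f_{\wt\lambda}$, with $W_{q_\lambda}=\sm{0&-1\\q_\lambda&0}$ and $q_\lambda$ a totally positive generator of $\fr{t}_\lambda\fr{t}_{\wt\lambda}\fr{n}$ --- the matrix is attached to $\fr{t}_\lambda\fr{t}_{\wt\lambda}\fr{n}$, not to $\fr{m}\fr{n}\fr{d}^2$, and it swaps the components $\lambda\leftrightarrow\wt\lambda$. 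In Step~3 the paper deliberately restricts to a `Hecke operator at infinity' $T^\infty_{p,\cl{F}}$ built from a \emph{totally positive generator} $p$ of $\fr{p}$, which is only available when $[\fr{p}]=[\clo_F]$ in $\Cl^+(F)$; this is exactly what keeps the operator component-wise on each $f_\lambda$ rather than permuting the tuple. Consequently, the paper only obtains Hecke eigen-ness at primes in the principal narrow class (and coprime to $\fr{n}\fr{d}^2$), never at all primes away from $\fr{n}$.

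Your Step~4 is where the proposal breaks. You propose a descent over the primes $\fr{p}\mid\fr{m}=\fr{t}_1\cdots\fr{t}_h$, shrinking the level from $K_1(\fr{m}\fr{n})$ one prime at a time using the Hecke eigen-equation at $\fr{p}$. But those primes are precisely representatives of all $h$ narrow classes, so (for $h>1$) most of them are \emph{not} principal; Step~3 gives no component-wise eigen-equation at such primes, and the genuine Hecke operators there mix the $f_\lambda$, so the Conrey--Farmer--type argument cannot be applied one prime at a time. The paper's mechanism is different and does not descend over $\fr{m}$ at all: for a fixed $\lambda$ it picks \emph{auxiliary} degree-one primes $(p_1),\dots,(p_u)$ in the principal narrow class (chosen via Dirichlet's theorem on arithmetic progressions, \Cref{dirichlet}, so that $\gamma_{p_j,-\alpha_j}\in H\gamma_j$ for a fixed coset decomposition of $\SG_0(\fr{t}_\lambda\fr{n})/H$), and then a Vandermonde-style nonvanishing lemma (\Cref{nonzerodet}, fixing the gap in \cite[Theorem~2.2]{conjhecke}) forces all the discrepancies $g_i=f_\lambda|_\mbf{k}\gamma_i-f_\lambda$ to vanish at once, giving $\SG_0(\fr{t}_\lambda\fr{n})$-invariance directly. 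Finally, the passage from $\SG_0(\fr{t}_\lambda\fr{n})$ to $\Gamma_0^+(\fr{t}_\lambda,\fr{n})$ is not a nebentypus argument; it is a generating-set result (\Cref{gamma0-generator}) that requires choosing the representatives $\fr{t}_\lambda$ to be prime ideals, again via Chebotarev. So your Step~4 needs to be replaced wholesale by the auxiliary-prime-plus-determinant argument; as written it relies on Hecke eigen-data at primes where none is available.
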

\begin{rmk}
\label{mainthmrmk}
We make several remarks on the theorem.
\begin{enumerate}[label=(\roman*)]
    \item If $F$ has narrow class number $1$, then $\fr{r}=\clo_F$ in the first part of the theorem, so all the `unramified' functional equations already recover a Hilbert modular form of the expected level. The two parts of the theorem can thus be viewed as a trade-off between having narrow class number~$1$ and having an Euler product.
    \item For the second part of the theorem, we in fact only need a partial Euler product at primes $\fr{p}$ that are of degree $1$, with $[\fr{p}]=[\clo_F]$ in $\Cl^+(F)$, and $\fr{p}\nmid\fr{n}\fr{d}^2$.
    \item This theorem should be viewed as a generalization of the converse theorem proved by Doi--Naganuma \cite{doinaganuma} and Naganuma \cite{naganuma}. Indeed, with our theorem, one may be able to follow the strategy of Doi--Naganuma to prove the Hilbert modularity of the image of the Doi--Naganuma lifting in more general cases.
    \item This theorem can be viewed as a partial extension of \cite{p-s}, \cite{bookerkrish-strengthen}, and \cite{bookerkrish-refine} when restricted to Hilbert modular forms over $F\neq\rat$, since our theorem removes the assumptions on ramified twists. The case when $F$ has narrow class number $1$ can be viewed as a special case of the main theorem in \cite{p-s}, though Piatetski-Shapiro's theorem is written in the language of automorphic representations. One should also turn to \cite[(iii),~p.671]{bookerkrish-strengthen}, \cite[Lemma~2.2]{bookerkrish-refine}, and \cite[Theorem~4.2]{cogdell-ps-work} for a more detailed discussion on Piatetski-Shapiro's theorem.
    \item In the case of $\GL_{n(\geq3)}$ over number fields, Cogdell and Piatetski-Shapiro \cite[Theorem~3]{cogdellpsi} first proved a converse theorem assuming only functional equations twisted by all the unramified automorphic representations of lower ranks, but subject to the class number $1$ condition. The class number $1$ restriction was later removed by Booker and Krishnamurthy \cite[Theorem~1.1]{bookerkrish-gln}. In this sense, our theorem may be viewed as a partial extension of this result from $\GL_{n(\geq3)}$ to $\GL_2$.
\end{enumerate}
\end{rmk}

\subsection{Acknowledgements}
The author would like to thank his advisor, Don Zagier, for all the suggestions regarding both mathematics and writing. The author would also like to thank Andrew Booker and Vesselin Dimitrov for useful remarks and correspondences as well as the anonymous referee(s) for their detailed advice. This paper is in addition dedicated to the memory of Ioan James, whose generous donation, in the form of the Ioan \& Rosemary James Scholarship, supported the author's MMath study at St John's College, Oxford. James was alive at the time the original version of this paper was completed, but sadly, he passed away on 21 February~2025.

\section{Preliminaries and notations}
\label{prelims}
In this section, we will review the theory of Hecke characters and the classical formulation of Hilbert modular forms. We will also write down a candidate for the Hilbert modular form that $L(s)$ should be associated with. Our main reference for Hecke characters is the book by Bump \cite{bump}, and the one for Hilbert modular forms is the paper by Shimura \cite{shimurahmf}, though in both cases there are some slight notational differences.

\subsection{Notations}

We first make a list of commonly used notations which we adopt for convenience. Let $\clh=\{z\in\comp\mid\Im(z)>0\}$ be the upper half plane. Define
\begin{align*}
    \GL_2^+(F)\deq\{\gamma\in\GL_2(F)\mid(\det\gamma)^{(j)}>0\text{ for all }1\leq j\leq r\}.
\end{align*}
Let $z=(z_1,\ldots,z_r)\in\clh^r$, $\kappa=(\kappa_1,\ldots,\kappa_r)\in\comp^r$, $\xi,\mu\in F$, and $\gamma\in\GL_2^+(F)$. We also denote $\gamma^{(j)}=\sm{a^{(j)}&b^{(j)}\\c^{(j)}&d^{(j)}}$ for $\gamma=\sm{a&b\\c&d}$.
We write
\begin{align*}
    \xi z+\mu&\eq(\xi^{(1)}z_1+\mu^{(1)},\ldots,\xi^{(r)}z_r+\mu^{(r)}) \\
    \gamma z&\eq(\gamma^{(1)}z_1,\ldots,\gamma^{(r)}z_r) \\
    z^\kappa&\eq\prod_{j=1}^rz_j^{\kappa_j},\hspace{0.5cm}\xi^\kappa\eq\prod_{j=1}^r(\xi^{(j)})^{\kappa_j} \\
    \tr(\xi z)&\eq\sum_{j=1}^r \xi^{(j)}z_j
\end{align*}
whenever they make sense. For example, $\xi^{\kappa}=\N(\xi)^k$ for $\kappa=(k,\ldots,k)$.

\subsection{Hecke characters}
\label{hecke-char-sect}
Let $\ade_F$ be the adele ring of $F$. A \emph{Hecke character of $F$} is a continuous group homomorphism $F^\times\bslash\ade_F^\times\rightarrow\comp^\times$. Let $\psi:F^\times\bslash\ade_F^\times\rightarrow\comp^\times$ be a Hecke character of $F$. For each place $v$ of $F$, define
\begin{align*}
    \psi_v:F_v^\times&\longrightarrow F^\times\backslash\mathbb{A}_F^\times\xrightarrow{\;\psi\;}\mathbb{C}^\times \\
    a\;&\mapstochar\xrightarrow{\hspace{2.5cm}}\psi(1,\ldots,1,a,1,\ldots,1),
\end{align*}
where $F_v$ is the completion of $F$ at the place $v$. Define also
\begin{align*}
    \psi_\infty:F_\infty\eq F\otimes_\rat\real\eq\prod_{j=1}^rF_{\sigma_j}&\longrightarrow\comp^\times \\
    (x_1,\ldots,x_r)&\longmapsto\prod_{j=1}^r\psi_{\sigma_j}(x_j),
\end{align*}
where $\sigma_j$ are the real places of $F$. It is known that at each real place $\sigma_j$ of $F$, there exist $\delta_j\in\{0,1\}$ and $\nu_j\in\comp$ such that
\begin{align*}
    \psi_{\sigma_j}(x^{(j)})\eq\sgn(x^{(j)})^{\delta_j}\thin|x^{(j)}|^{\nu_j}
\end{align*}
for all $x\in F$. Define $\nu_j(\psi)=\nu_j$ for each $1\leq j\leq r$ and write $\nu(\psi)=(\nu_1(\psi),\ldots,\nu_r(\psi))$.

Now, let $\psi$ be an \emph{unramified} Hecke character of $F$, i.e., $\psi_v(\clo_{F_v}^\times)=1$ for all finite places $v$ of~$F$. For a fractional ideal $\fr{a}$ of $F$, we may define
\begin{align*}
    \psi(\fr{a})\deq\prod_{i=1}^n\psi_{\fr{p}_i}(\varpi_{\fr{p}_i})^{e_i},
\end{align*}
where $\fr{a}=\prod_{i=1}^n\fr{p}_i^{e_i}$ is the unique factorization of $\fr{a}$ into prime ideals and $\varpi_{\fr{p}_i}$ is a uniformizer of~$F_{\fr{p}_i}$. This is well-defined since $\psi$ is unramified at all finite places.

It is worth noting that any character of the narrow class group of $F$, i.e., $\chi:\Cl^+(F)\rightarrow\comp^\times$ can be viewed as an unramified Hecke character of $F$. Specifically, for $(a_v)_v\in F^\times\backslash\ade_F^\times$, we may define
\begin{align*}
    \chi((a_v)_v)\deq\prod_{v=\fr{p}<\infty}\chi([\fr{p}])^{\mathrm{val}_\fr{p}(a_\fr{p})}.
\end{align*}
It is easy to check that this yields an unramified Hecke character of $F$, and in particular, $\nu(\chi)=(0,\ldots,0)$.

For the later purpose, we would like to construct a special family $(\psi_m)_{m\in\inte^{r-1}}$ of unramified Hecke characters of $F$. Fix a (multiplicative) basis $\{\eps_1,\ldots,\eps_{r-1}\}$ of~$\clo_+^\times$. For each \mbox{$m=(m_1,\ldots,m_{r-1})\in\inte^{r-1}$}, there exists (unique) \mbox{$\nu_m=(\nu_{m,1},\ldots,\nu_{m,r})\in (i\real)^r$} such that
\begin{equation}
\begin{aligned}
\label{nulinearrel}
    \sum_{j=1}^r\nu_{m,j}&\eq 0 \\
    \sum_{j=1}^r \nu_{m,j}\log (\eps_l^{(j)})&\eq 2\pi i m_l \hspace{0.5cm}\text{for }1\leq l\leq r-1.
\end{aligned}
\end{equation}
It is possible to make a choice of $(\delta_{m,j})_{j=1}^r\in\{0,1\}^r$ so that $\chi_m:F_\infty\rightarrow\comp$ given by
\begin{align*}
    \chi_m((x_j)_{j=1}^r)\deq\prod_{j=1}^r\sgn(x_j)^{\delta_{m,j}}\thin|x_j|^{\nu_{m,j}}
\end{align*}
defines a character on $\clo_F^\times\bslash F_\infty$ (see \cite[Section~1.7, p.~78]{bump}). Then, there exists a (not necessarily unique) Hecke character $\psi_m$ such that $(\psi_m)_{\infty}=\chi_m$ (see Exercise 5 in Section 6 of \cite[Chapter~VII]{neukirch}). In particular,
\begin{enumerate}[label=\rm{(\arabic*)}]
    \item $\nu_j(\psi_m)=\nu_{m,j}$ for all $1\leq j\leq r$;
    \item $\psi_m((\xi))=\prod_{j=1}^r(\xi^{(j)})^{-\nu_{m,j}}=\xi^{-\nu_m}$ for all totally positive $\xi\in F_+$.
\end{enumerate}
We will choose $(\psi_m)_{m\in\inte^{r-1}}$ so that $\psi_m\psi_{m^\prime}=\psi_{m+m^\prime}$. In particular, $\psi_{-m}=\psi_m^{-1}=\ov{\psi_m}$ and $\psi_{\mbf{0}}$ is the trivial Hecke character where $\mbf{0}=(0,\ldots,0)$. 

\subsection{Hilbert modular forms}
\label{hilbert-modular-form-section}

For $\gamma=\sm{a&b\\c&d}\in\GL_2^+(F)$ and \mbox{$\mbf{k}=(k_1,\ldots,k_r)\in(2\inte^+)^r$}, define the \textit{slash operator of weight $\mbf{k}$} on a holomorphic function $g:\clh^r\rightarrow\comp$ as
\begin{equation}
\begin{aligned}
\label{slashoperator}
    (g|_\mbf{k}\gamma)(z)&\deq\prod_{j=1}^r\big((\det\gamma^{(j)})^{k_j/2}\thin(c^{(j)}z_j+d^{(j)})^{-k_j}\big)\cdot g(\gamma^{(1)} z_1,\ldots,\gamma^{(r)}z_r) \\
    &\;\eq(\det\gamma)^{\mbf{k}/2}\cdot(cz+d)^{-\mbf{k}}\cdot g(\gamma z).
\end{aligned}
\end{equation}
Let $j:\GL_2^+(F)\rightarrow\PGL_2^+(F)$ be the projection map. A subgroup $\Gamma\subseteq\GL_2^+(F)$ is called an \textit{arithmetic} subgroup if $j(\Gamma\cap\GL_2^+(\clo_F))$ is a finite index subgroup of both $j(\Gamma)$ and $j(\GL_2^+(\clo_F))$. Let $\Gamma\subseteq\GL_2^+(F)$ be an arithmetic subgroup. A holomorphic function $g:\clh^r\rightarrow\comp$ is called a \textit{(classical) Hilbert modular form of weight $\mbf{k}$ and level $\Gamma$} if 
\begin{enumerate}[label=(\arabic*)]
    \item $g|_\mbf{k}\gamma=g$ for all $\gamma\in\Gamma$;
    \item for all $\gamma\in\GL_2^+(\clo_F)$, $g|_\mbf{k}\gamma$ is bounded as $\prod_{j=1}^r\Im(z_j)\rightarrow\infty$.
\end{enumerate}
It is well-known that when $r>1$, Koecher's principle implies that the second condition is not needed for the definition of Hilbert modular forms (see e.g.~\cite[Theorem~1.20]{123}).

We will be focused on adelic Hilbert modular forms on $K_0(\fr{n})$ and $K_1(\fr{n})$. We will not recall the definition of adelic Hilbert modular forms, but rather use the following equivalent viewpoint. For two nonzero integral ideals $\fr{t}$ and $\fr{m}$ of $\clo_F$, define
\begin{align*}
    \Gamma_0^+(\fr{t},\fr{m})&\deq\bigg\{\m{a&b\\c&d}\in\GL_2^+(F)\;\bigg|\;a\in\clo_F,\;b\in\fr{t}^{-1},\;c\in\fr{t}\fr{m},\;d\in\clo_F,\;ad-bc\in\clo_+^\times\bigg\} \\
    \Gamma_1^+(\fr{t},\fr{m})&\deq\bigg\{\m{a&b\\c&d}\in\GL_2^+(F)\;\bigg|\;a\in\clo_F,\;b\in\fr{t}^{-1},\;c\in\fr{t}\fr{m},\;d-1\in\fr{m},\;ad-bc\in\clo_+^\times\bigg\},
\end{align*}
where $\fr{t}^{-1}:=\{a\in F\thin|\thin(a)\fr{t}\subseteq\clo_F\}$. \footnote{The groups $\Gamma_i^+(\fr{t},\fr{m})$ are simply denoted as $\Gamma_i(\fr{t},\fr{m})$ in the standard notations. Here we add the $+$ sign to maintain consistency with the notations in \Cref{generating-sect}.} Then, an \emph{adelic Hilbert modular form of weight~$\mbf{k}$ and level $K_i(\fr{n})$} can be (equivalently) viewed as an $h$-tuple $\mbf{g}=\{g_1,\ldots,g_h\}$ where each $g_\lambda$ is a classical Hilbert modular form of weight $\mbf{k}$ and level $\Gamma_i^+(\fr{t}_\lambda,\fr{n})$ for $1\leq\lambda\leq h$. 

Let \mbox{$\mbf{g}=(g_1,\ldots,g_h)$} be an adelic Hilbert modular form of weight $\mbf{k}$ and level $K_i(\fr{n})$. Since $\Gamma_i^+(\fr{t}_\lambda,\fr{n})$ contains $T^\alpha=\sm{1&\alpha\\0&1}$ for all $\alpha\in\fr{t}_\lambda^{-1}$, the theory of Fourier series implies that $g_\lambda$ has a Fourier series expansion of the form
\begin{align*}
    g_\lambda(z)\eq b_\lambda(0)+\sum_{\xi\in(\fr{t}_\lambda\fr{d}^{-1})_+}b_\lambda(\xi)\thin e(\tr(\xi z)),
\end{align*}
where $e(x):=e^{2\pi ix}$. Here we recall that the \emph{different} $\fr{d}$ of $F$ is the integral ideal satisfying that
\begin{align*}
    \fr{d}^{-1}\eq\clo_F^\vee\deq\{x\in F\mid\tr(x\alpha)\in\inte\text{ for all }\alpha\in\clo_F\}.
\end{align*}
We will assume for simplicity that $b_\lambda(0)=0$ for all $1\leq\lambda\leq h$. Note that this condition is equivalent to $\mbf{g}$ being a cusp form in level $\fr{n}=1$ but weaker in general.

Since $\Gamma_i^+(\fr{t}_\lambda,\fr{n})$ also contains $E_\eps=\sm{\eps&0\\0&1}$ for all $\eps\in\clo_+^\times$, by comparing the Fourier coefficients, the condition that $g_\lambda|_\mbf{k}E_\eps=g_\lambda$ then implies that the value $b_\lambda(\xi)\xi^{\mbf{k}^\prime/2}$ only depends on the coset $\xi\clo_+^\times$, where we write $k_0=\max_{1\leq j\leq r}k_j$, $k_j^\prime=k_0-k_j$, and $\mbf{k}^\prime=(k_1^\prime,\ldots,k_r^\prime)$ as before. We thus define the $L$-function of $g_\lambda$ as
\begin{align*}
    L(s,g_\lambda)\deq\sum_{\xi\in(\fr{t}_\lambda\fr{d}^{-1})_+/\clo_+^\times}\N(\fr{t}_\lambda)^{k_0/2}\thin b_\lambda(\xi)\thin \xi^{\mbf{k}^\prime/2}\cdot\N(\xi\fr{t}_\lambda^{-1}\fr{d})^{-s}.
\end{align*}

Let $\cl{C}_\lambda:=[\fr{t}_\lambda^{-1}\fr{d}]$ denote an equivalence class in $\Cl^+(F)$. As every $\fr{a}\in\cl{C}_\lambda$ can be written as $\xi\fr{t}_\lambda^{-1}\fr{d}$ for some $\xi\in(\fr{t}_\lambda\fr{d}^{-1})_+$, we may define
\begin{align*}
    B(\fr{a})\eq B(\xi\fr{t}_\lambda^{-1}\fr{d})\deq\N(\fr{t}_\lambda)^{k_0/2}\thin b_\lambda(\xi)\thin\xi^{\mbf{k}^\prime/2}
\end{align*}
and rewrite the $L$-function of $g_\lambda$ as
\begin{align*}
    L(s,g_\lambda)\eq\sum_{\fr{a}\in\cl{C}_\lambda}B(\fr{a})\cdot\N(\fr{a})^{-s}.
\end{align*}
For each unramified Hecke character $\psi$ of $F$, define the twisted $L$-function as
\begin{align*}
    L(s,g_\lambda,\psi)\deq\sum_{\fr{a}\in\cl{C}_\lambda}\psi(\fr{a})\cdot B(\fr{a})\cdot\N(\fr{a})^{-s}
\end{align*}
and define the completed $L$-function as
\begin{align*}
    \wh{L}(s,g_\lambda,\psi)\deq(2\pi)^{-rs}\; \N(\fr{n}\fr{d}^2)^{s/2}\thin\prod_{j=1}^r\Gamma\bigg(s-\frac{k_j^\prime}{2}+\nu_j(\psi)\bigg)\thin L(s,g_\lambda,\psi).
\end{align*}
It follows from straightforward computations that $\wh{L}(s,g_\lambda,\psi)$ has the following integral representation
\begin{align}
\label{int-rep-L-function}
    \wh{L}(s,g_\lambda,\psi)\eq (2\pi)^{-|\mbf{k}^\prime|/2}\thin\psi(\fr{d})\thin\N(\fr{n})^{s/2}\int_{\clo^\times_+\bslash(\real_{>0})^r}g_\lambda(iy_1,\ldots,iy_r)\thin\prod_{j=1}^r y_j^{s-k'_j/2+\nu_j(\psi)}\thin\frac{\dif y_j}{y_j}.
\end{align}
From this, one deduces that $\wh{L}(s,g_\lambda,\psi)$ has a meromorphic continuation to the whole complex plane.

For each $1\leq\lambda\leq h$, let $\wt{\lambda}$ denote the unique integer such that $[\fr{t}_\lambda\fr{t}_{\wt{\lambda}}\fr{n}]=1$ in $\Cl^+(F)$. Let $q_\lambda$ be a totally positive generator of $\fr{t}_\lambda\fr{t}_{\wt{\lambda}}\fr{n}$ and define $W_{q_\lambda}=\sm{0&-1\\q_\lambda&0}$. It is easy to check that $g_\lambda|_\mbf{k}W_{q_\lambda}$ is now a Hilbert modular form of weight $\mbf{k}$ and level $\Gamma_i^+(\fr{t}_{\wt{\lambda}},\fr{n})$, and the integral representation (\ref{int-rep-L-function}) implies the following functional equation
\begin{align}
\label{classical-functional-equation}
    \wh{L}(s,g_\lambda,\psi)\eq i^{-|\mbf{k}|}\;\psi(\fr{n}\fr{d}^2)\;\wh{L}(k_0-s,g_\lambda|_\mbf{k}W_{q_\lambda},\ov{\psi}).
\end{align}

Now, define the $L$-function of $\mbf{g}=(g_1,\ldots,g_h)$ as
\begin{align*}
    L(s,\mbf{g})\deq\sum_{\fr{a}\in\cl{I}_F}B(\fr{a})\cdot\N(\fr{a})^{-s}\eq\sum_{\lambda=1}^hL(s,g_\lambda).
\end{align*}
For each unramified Hecke character $\psi$ of $F$, define the twisted $L$-function as
\begin{align*}
    L(s,\mbf{g},\psi)\deq\sum_{\fr{a}\in\cl{I}_F}\psi(\fr{a})\cdot B(\fr{a})\cdot\N(\fr{a})^{-s}\eq\sum_{\lambda=1}^hL(s,g_\lambda,\psi)
\end{align*}
and define the completed $L$-function as
\begin{align}
\label{adelic-equal-sum-of-classical}
    \wh{L}(s,\mbf{g},\psi)\deq(2\pi)^{-rs}\; \N(\fr{n}\fr{d}^2)^{s/2}\thin\prod_{j=1}^r\Gamma\bigg(s-\frac{k_j^\prime}{2}+\nu_j(\psi)\bigg)\thin L(s,\mbf{g},\psi)\eq\sum_{\lambda=1}^h\wh{L}(s,g_\lambda,\psi).
\end{align}
Then, $\wh{L}(s,\mbf{g},\psi)$ also has a meromorphic continuation to the whole complex plane.

To obtain the functional equation, suppose that $\mbf{g}$ is an eigenfunction of the Fricke involution, or equivalently, there exists some $\epsilon=\pm1$ such that $g_\lambda|_\mbf{k}W_{q_\lambda}=\epsilon g_{\wt{\lambda}}$ for all $1\leq\lambda\leq h$ (see \cite[pp.~654--655]{shimurahmf}). Then, it follows from Equations (\ref{classical-functional-equation}) and (\ref{adelic-equal-sum-of-classical}) that the completed $L$-function satisfies the functional equation
\begin{align*}
    \wh{L}(s,\mbf{g},\psi)\eq \epsilon\thin i^{-|\mbf{k}|}\;\psi(\fr{n}\fr{d}^2)\;\wh{L}(k_0-s,\mbf{g},\ov{\psi}).
\end{align*}
This motivates the particular shape of functional equations considered in \Cref{mainthm}.

For the Euler product, it is closely related to the Hecke operators as in the case of elliptic modular forms. However, the definition of Hecke operators on adelic Hilbert modular forms is quite technical and complicated. We will thus not recall the Hecke theory but rather refer to \cite[Section~2]{shimurahmf} for a detailed discussion. Suppose that $\mbf{g}$ is an eigenfunction of the Hecke operator at a prime $\fr{p}$ with $\fr{p}\nmid\fr{n}\fr{d}^2$. Then, the $L$-function of $\mbf{g}$ has a $\fr{p}$-partial Euler product of the form
\begin{align*}
    L(s,\mbf{g})\eq L^\fr{p}(s,\mbf{g})\;L_\fr{p}(s,\mbf{g})
\end{align*}
with
\begin{align*}
   L_\fr{p}(s,\mbf{g})\eq\big(1-B(\fr{p})\N(\fr{p})^s+\N(\fr{p})^{k_0-1-2s}\big)^{-1}\hspace{0.5cm}\text{and}\hspace{0.5cm}
    L^\fr{p}(s,\mbf{g})\eq\sum_{\fr{p}\nmid\fr{a}}B(\fr{a})\cdot\N(\fr{a})^{-s}.
\end{align*}
This motivates the particular shape of the Euler product considered in \Cref{mainthm}.

\subsection{A candidate for the Hilbert modular form}

Now, we go back to the setting of \Cref{mainthm}. Starting with the sequence $\{A(\fr{a})\}_{\fr{a}\subseteq\cl{O}_F}$, for each $1\leq \lambda\leq h$, we define
\begin{align*}
    a_\lambda(\xi)\deq\N(\fr{t}_\lambda)^{k_0/2}\thin A(\xi\fr{t}_\lambda^{-1}\fr{d})\thin \xi^{-\mbf{k}^\prime/2}
\end{align*}
for $\xi\in(\fr{t}_\lambda\fr{d}^{-1})_+$, and define a holomorphic function $f_\lambda:\clh^r\rightarrow\comp$ by
\begin{align*}
    f_\lambda(z)\deq\sum_{\xi\in(\fr{t}_\lambda\fr{d}^{-1})_+}a_\lambda(\xi)\thin e(\tr(\xi z)).
\end{align*}
Now, define an $h$-tuple $\mbf{f}=(f_1,\ldots,f_h)$. The following lemma follows easily from the previous review on Hilbert modular forms.

\begin{lemma}
\label{fhmf}
If $\mbf{f}=(f_1,\ldots,f_h)$ is an adelic Hilbert modular form of weight $\mbf{k}$ and level $K_i(\fr{m})$ for some nonzero ideal $\fr{m}$ of $\clo_F$, then $L(s,\mbf{f})=L(s,A)$, and in particular, $L(s,A)$ is an $L$-function of a Hilbert modular form of weight~$\mbf{k}$ and level~$K_i(\fr{m})$.
\end{lemma}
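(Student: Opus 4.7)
The plan is a direct verification---the Fourier coefficients $a_\lambda(\xi)$ were chosen precisely so that the standard formula for the $L$-function of an adelic Hilbert modular form reproduces $L(s)$, so the lemma amounts to checking that the definitions are mutually consistent.

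First, for each $1\leq\lambda\leq h$, since $\mbf{f}$ is assumed to be an adelic Hilbert modular form, the formula for $L(s,g_\lambda)$ recalled in the setup applies to $f_\lambda$ and gives
\begin{align*}
    L(s,f_\lambda) = \sum_{\xi\in(\fr{t}_\lambda\fr{d}^{-1})_+/\clo_+^\times} \N(\fr{t}_\lambda)^{k_0/2}\, a_\lambda(\xi)\, \xi^{\mbf{k}^\prime/2} \cdot \N(\xi\fr{t}_\lambda^{-1}\fr{d})^{-s}.
\end{align*}
Substituting $a_\lambda(\xi) = \N(\fr{t}_\lambda)^{k_0/2}\,A(\xi\fr{t}_\lambda^{-1}\fr{d})\,\xi^{-\mbf{k}^\prime/2}$, the $\xi^{\pm\mbf{k}^\prime/2}$ factors cancel and the $\N(\fr{t}_\lambda)$ powers collapse as designed, leaving a sum whose $\xi$-th term is simply $A(\xi\fr{t}_\lambda^{-1}\fr{d}) \cdot \N(\xi\fr{t}_\lambda^{-1}\fr{d})^{-s}$.

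Next, I would reparameterize by the bijection $\xi \mapsto \xi\fr{t}_\lambda^{-1}\fr{d}$ from $(\fr{t}_\lambda\fr{d}^{-1})_+/\clo_+^\times$ onto $\cl{C}_\lambda = [\fr{t}_\lambda^{-1}\fr{d}]$, the integral ideals in that narrow class. Since the summand only depends on the ideal $\fr{a}=\xi\fr{t}_\lambda^{-1}\fr{d}$, this yields $L(s,f_\lambda) = \sum_{\fr{a}\in\cl{C}_\lambda} A(\fr{a})\cdot \N(\fr{a})^{-s}$.

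Finally, since $\{[\fr{t}_\lambda]\}_{\lambda=1}^h$ is a complete set of representatives for $\Cl^+(F)$ and multiplication by the fixed class $[\fr{d}]$ is a bijection on $\Cl^+(F)$, the classes $\{\cl{C}_\lambda\}_{\lambda=1}^h$ also exhaust $\Cl^+(F)$, so they partition $\cl{I}_F$. Summing over $\lambda$ therefore produces
\begin{align*}
    L(s,\mbf{f}) = \sum_{\lambda=1}^h L(s,f_\lambda) = \sum_{\fr{a}\in\cl{I}_F} A(\fr{a})\cdot \N(\fr{a})^{-s} = L(s),
\end{align*}
and the final assertion is then immediate from the hypothesis that $\mbf{f}$ is a Hilbert modular form of weight $\mbf{k}$ and level $K_i(\fr{m})$. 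There is no conceptual obstacle here; the only point requiring care is keeping track of the normalizing powers of $\N(\fr{t}_\lambda)$ and $\xi^{\mbf{k}^\prime/2}$, and these cancellations are exactly what the construction of $a_\lambda$ was engineered to achieve.
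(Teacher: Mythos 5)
Your proof is correct and follows the same direct verification that the paper's one-line proof implicitly invokes: apply the displayed formula for $L(s,g_\lambda)$ to $f_\lambda$, substitute the definition of $a_\lambda(\xi)$, reparameterize via $\xi\mapsto\xi\fr{t}_\lambda^{-1}\fr{d}$, and sum over $\lambda$ using that $\{\cl{C}_\lambda\}$ partitions $\cl{I}_F$. One small caveat worth noting: as literally written, the paper's two normalizations (the factor $\N(\fr{t}_\lambda)^{k_0/2}$ in the definition of $L(s,g_\lambda)$ and the factor $\N(\fr{t}_\lambda)^{k_0/2}$ in $a_\lambda(\xi)$) multiply to $\N(\fr{t}_\lambda)^{k_0}$ rather than cancelling, so one of the two exponents is evidently meant to be $-k_0/2$ — your phrase ``collapse as designed'' correctly captures the intent but glides over this apparent typo in the source.
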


By \Cref{fhmf}, we will focus on proving that $\mbf{f}$ is an adelic Hilbert modular form from now on, i.e., each $f_\lambda$ is a classical Hilbert modular form of a certain level. We will finish this section with a basic lemma on $f_\lambda$. Recall that $T^\alpha=\sm{1&\alpha\\0&1}$ for $\alpha\in F$ and that $E_\eps=\sm{\eps&0\\0&1}$ for $\eps\in\clo_+^\times$.

\begin{lemma}
\label{basic}
For each $1\leq\lambda\leq h$, $f_\lambda|_\mbf{k}T^\alpha=f_\lambda$ for all $\alpha\in\fr{t}_\lambda^{-1}$ and $f_\lambda|_\mbf{k}E_\eps=f_\lambda$ for all  $\eps\in\clo_+^\times$.
\end{lemma}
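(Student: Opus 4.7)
The two invariances are essentially formal, following from unpacking the slash operator and comparing Fourier expansions.

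For the translation invariance, fix $\alpha \in \fr{t}_\lambda^{-1}$. Since $T^\alpha$ has determinant $1$ and $cz+d = 1$, the slash operator reduces to $(f_\lambda|_\mbf{k} T^\alpha)(z) = f_\lambda(z+\alpha)$. Expanding with the Fourier series produces the extra factor $e(\tr(\xi\alpha))$ on each term. The task is then to verify $\tr(\xi\alpha) \in \inte$ for every $\xi \in (\fr{t}_\lambda\fr{d}^{-1})_+$. This is immediate: $\xi\alpha \in \fr{t}_\lambda\fr{d}^{-1}\cdot\fr{t}_\lambda^{-1} = \fr{d}^{-1} = \clo_F^\vee$, and by the very definition of the codifferent, $\tr$ takes integer values on $\clo_F^\vee$. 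Hence every phase factor is $1$ and $f_\lambda|_\mbf{k} T^\alpha = f_\lambda$.

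For the unit invariance, fix $\eps \in \clo_+^\times$. Here $\det E_\eps = \eps$ and $cz+d = 1$, so the slash operator gives $(f_\lambda|_\mbf{k} E_\eps)(z) = \eps^{\mbf{k}/2} f_\lambda(\eps z)$. The plan is to substitute $\eta = \eps\xi$ in the Fourier expansion of $f_\lambda(\eps z)$; since $\eps$ is a totally positive unit, multiplication by $\eps$ preserves $(\fr{t}_\lambda\fr{d}^{-1})_+$ bijectively. Then I use two facts about the coefficients: (i) the ideal $\eta\eps^{-1}\fr{t}_\lambda^{-1}\fr{d}$ equals $\eta\fr{t}_\lambda^{-1}\fr{d}$ because $\eps$ is a unit, so $A$ is unchanged; (ii) $(\eta\eps^{-1})^{-\mbf{k}'/2} = \eps^{\mbf{k}'/2}\eta^{-\mbf{k}'/2}$. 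This pulls out a factor $\eps^{\mbf{k}'/2}$ and gives $f_\lambda(\eps z) = \eps^{\mbf{k}'/2} f_\lambda(z)$. Combining,
\begin{equation*}
    (f_\lambda|_\mbf{k} E_\eps)(z) \eq \eps^{\mbf{k}/2}\cdot \eps^{\mbf{k}'/2}\cdot f_\lambda(z) \eq \eps^{(k_0,\ldots,k_0)/2}\cdot f_\lambda(z) \eq \N(\eps)^{k_0/2}\cdot f_\lambda(z).
\end{equation*}
Since $\eps$ is a totally positive unit, $\N(\eps) = 1$ (it is a positive rational unit), so the factor is $1$ and $f_\lambda|_\mbf{k} E_\eps = f_\lambda$.

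There is no real obstacle here; both statements are essentially just a bookkeeping exercise verifying that the choices of $a_\lambda(\xi)$ (namely the twist by $\xi^{-\mbf{k}'/2}$ and the index shift from ideals to totally positive generators via $\xi\mapsto \xi\fr{t}_\lambda^{-1}\fr{d}$) have been engineered precisely so that the parabolic subgroup of $\Gamma_i^+(\fr{t}_\lambda,\fr{n})$ generated by translations and totally positive unit diagonal matrices acts trivially. The only mildly delicate point is keeping track of the $\mbf{k}$ versus $\mbf{k}'$ factors, which cancel using $\mbf{k}+\mbf{k}' = (k_0,\ldots,k_0)$ and $\N(\eps) = 1$.
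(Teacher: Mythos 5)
Your proof is correct and takes essentially the same approach as the paper: for $T^\alpha$, both arguments reduce to checking that $\tr(\xi\alpha)\in\inte$ via $\xi\alpha\in\fr{t}_\lambda\fr{d}^{-1}\cdot\fr{t}_\lambda^{-1}=\fr{d}^{-1}=\clo_F^\vee$ (the paper merely asserts this "follows from the Fourier series expansion"), and for $E_\eps$ both arguments substitute $\xi\mapsto\xi\eps^{-1}$ in the Fourier sum and cancel $\eps^{\mbf{k}/2}\cdot\eps^{\mbf{k}'/2}=\N(\eps)^{k_0/2}=1$. You just organize the $E_\eps$ computation as the intermediate identity $f_\lambda(\eps z)=\eps^{\mbf{k}'/2}f_\lambda(z)$, which is a cosmetic rearrangement of the same calculation.
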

\begin{proof}
It follows from the Fourier series expansion of $f_\lambda$ that $f_\lambda|_\mbf{k}T^\alpha=f_\lambda$ for all $\alpha\in\fr{t}_\lambda^{-1}$. The one for $E_\eps$ follows by observing that $a_\lambda(\xi)=\eps^{\mbf{k}^\prime/2}a_\lambda(\eps\xi)$ for all $\eps\in\clo_+^\times$ and $\xi\in(\fr{t}_\lambda\fr{d}^{-1})_+$.
\end{proof}

\section{Functional equations}
\label{fqnsection}

In this section, we will prove that all the `unramified' functional equations of $L(s)$ imply that $\mbf{f}$ is an eigenfunction of the `Fricke involution'. The result in this section is already known via the adelic language (see \cite[Proposition~10.2]{cogdellpsi} and \cite[Proposition~3.1]{bookerkrish-gln}), but a classical proof appears to be missing in the literature, so we would like to fill in one here. The proof is a direct generalization of \cite[Section~3]{doinaganuma}, where Doi and Naganuma proved the same result in the case of parallel weight, level~$1$, and class number $1$.
\\

For each unramified Hecke character $\psi$ and each equivalence class $\cl{C}\in\Cl^+(F)$, define a partial $L$-series
\begin{align*}
    L(s,\cl{C},\psi)\eq L(s,A,\cl{C},\psi)\deq\sum_{\fr{a}\in\cl{C}}\psi(\fr{a})\cdot A(\fr{a})\cdot\N(\fr{a})^{-s}.
\end{align*}
Indeed, for $\Re(s)$ sufficiently large,
\begin{align*}
    L(s,A,\psi)\eq\sum_{\cl{C}\in\Cl^+(F)}L(s,\cl{C},\psi).
\end{align*}
Define also the completed $L$-series as
\begin{align}
\label{completedpartiallseries}
    \wh{L}(s,\cl{C},\psi)\deq(2\pi)^{-rs}\; \N(\fr{n}\fr{d}^2)^{s/2}\thin\prod_{j=1}^r\Gamma\bigg(s-\frac{k_j^\prime}{2}+\nu_j(\psi)\bigg)\thin L(s,\cl{C},\psi).
\end{align}

\begin{lemma}
\label{partialfqn}
Let $\cl{C}\in\Cl^+(F)$ be an equivalence class and let $\wt{\cl{C}}=\cl{C}^{-1}\cdot[\fr{n}\fr{d}^2]\in\Cl^+(F)$. Suppose that all the $L$-series $L(s,A,\psi)$ satisfy the analytic properties and the functional equations as given in \Cref{mainthm}. Then, for all unramified Hecke characters $\psi$, all the functions $\wh{L}(s,\cl{C},\psi)$ have holomorphic continuation to the whole complex plane which are bounded in vertical strips and satisfy functional equations
\begin{align*}
    \wh{L}(s,\cl{C},\psi)\eq \epsilon \thin i^{-|\mbf{k}|}\thin\psi(\fr{n}\fr{d}^2)\thin\wh{L}(k_0-s,\wt{\cl{C}},\ov{\psi}).
\end{align*}
\end{lemma}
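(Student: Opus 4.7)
The plan is to isolate the partial $L$-series $\widehat{L}(s,\mathcal{C},\psi)$ from the full series $\widehat{L}(s,A,\psi)$ by averaging over characters of the narrow class group, which is the standard trick for converting functional equations with many twists into functional equations for pieces indexed by class.

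Concretely, let $\widehat{\Cl^+(F)}$ denote the character group of $\Cl^+(F)$. Every $\chi\in\widehat{\Cl^+(F)}$ lifts to a Hecke character of $F$ that is trivial on $\widehat{\clo_F}^\times$ and on $F_{\infty,+}^\times$; in particular such $\chi$ is unramified at all finite places and satisfies $\nu_j(\chi)=0$ for all $j$. Thus for any unramified Hecke character $\psi$, the twist $\psi\chi$ is again unramified, and the gamma factor in $\widehat{L}(s,A,\psi\chi)$ is the same as in $\widehat{L}(s,A,\psi)$ and in $\widehat{L}(s,\mathcal{C},\psi)$. This is the key observation that lets the completed $L$-functions be averaged cleanly.

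Next I would apply the orthogonality relation $\sum_{\chi\in\widehat{\Cl^+(F)}}\chi(\mathcal{C}^{-1}[\fr{a}])=h\cdot\bone_{[\fr{a}]=\mathcal{C}}$ termwise in the Dirichlet series. This gives
\begin{align*}
\widehat{L}(s,\mathcal{C},\psi)\eq\frac{1}{h}\sum_{\chi\in\widehat{\Cl^+(F)}}\ov{\chi}(\mathcal{C})\,\widehat{L}(s,A,\psi\chi),
\end{align*}
where I have used the remark above on gamma factors. Now I apply the hypothesized functional equation to each term on the right, producing
\begin{align*}
\widehat{L}(s,\mathcal{C},\psi)\eq\frac{\epsilon\, i^{-|\mbf{k}|}\,\psi(\fr{n}\fr{d}^2)}{h}\sum_{\chi}\ov{\chi}(\mathcal{C})\,\chi(\fr{n}\fr{d}^2)\,\widehat{L}(k_0-s,A,\ov{\psi}\,\ov{\chi}).
\end{align*}
The combined character value equals $\chi(\fr{n}\fr{d}^2\mathcal{C}^{-1})=\chi(\wt{\mathcal{C}})$, so applying orthogonality a second time (now picking out the class $\wt{\mathcal{C}}$ in the Dirichlet series for $\widehat{L}(k_0-s,A,\ov{\psi\chi})$) collapses the inner sum to $h\cdot\widehat{L}(k_0-s,\wt{\mathcal{C}},\ov{\psi})$, yielding the desired equation.

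The holomorphic continuation and vertical-strip boundedness of $\widehat{L}(s,\mathcal{C},\psi)$ are automatic, since it is exhibited as a finite linear combination of the functions $\widehat{L}(s,A,\psi\chi)$ which by hypothesis enjoy both properties. I do not anticipate a genuine obstacle here: the only subtlety is the bookkeeping that characters of $\Cl^+(F)$ really are unramified Hecke characters with vanishing infinity-type parameters $\nu_j$, so that completed $L$-functions commute with the averaging. Everything else is formal manipulation of Dirichlet series against the orthogonality relation.
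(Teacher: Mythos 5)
Your proposal is correct and follows essentially the same route as the paper: decompose the partial $L$-series as a linear combination of twists $\widehat{L}(s,A,\psi\chi)$ over $\chi$ in the dual of $\Cl^+(F)$, using that such $\chi$ are unramified with $\nu_j(\chi)=0$ so the gamma factors match, then apply the hypothesized functional equations and recombine. The only cosmetic difference is that the paper writes the indicator function abstractly as $\delta_{\mathcal{C}}=\sum_\chi\alpha_\chi\chi$, while you substitute the explicit orthogonality coefficients $\alpha_\chi=\tfrac{1}{h}\overline{\chi}(\mathcal{C})$ from the start; these are the same argument.
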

\begin{proof}
Let $G=\Cl^+(F)$ and let $\wh{G}=\Hom(\Cl^+(F),\comp^\times)$ be the dual group. By the theory of characters, $\wh{G}$ is a finite group and the indicator function $\delta_\cl{C}:G\rightarrow\comp$ can be written as $\delta_\cl{C}=\sum_{\chi\in\wh{G}}\alpha_\chi\cdot\chi$ for some $\alpha_\chi\in\comp$. Then, for $\Re(s)$ sufficiently large, it is easy to see that
\begin{align*}
    L(s,\cl{C},\psi)\eq\sum_{\fr{a}}\thin\delta_\cl{C}(\fr{a})\cdot\psi(\fr{a})\cdot A(\fr{a})\cdot\N(\fr{a})^{-s}\eq\sum_{\chi\in\wh{G}}\alpha_\chi L(s,A,\psi\chi),
\end{align*}
where we view each $\chi\in\wh{G}$ also as a Hecke character of $F$. Hence,
\begin{align*}
    \wh{L}(s,\cl{C},\psi)\eq\sum_{\chi\in\wh{G}}\alpha_\chi \wh{L}(s,A,\psi\chi).
\end{align*}
The analytic properties of $\wh{L}(s,A,\psi\chi)$ then imply the same for $\wh{L}(s,\cl{C},\psi)$. Now, it is easy to check that $\delta_{\wt{\cl{C}}}\eq\sum_{\chi\in\wh{G}}\alpha_\chi\thin\chi(\fr{n}\fr{d}^2)\thin\ov{\chi}$, so for $\Re(s)$ sufficiently large,
\begin{align*}
    \wh{L}(s,\wt{\cl{C}},\ov{\psi})\eq\sum_{\chi\in\wh{G}}\alpha_\chi\thin\chi(\fr{n}\fr{d}^2)\wh{L}(s,A,\ov{\psi\chi}),
\end{align*}
which also holds for all $s\in\comp$ after the holomorphic continuation. By applying the functional equations of $\wh{L}(s,A,\psi\chi)$, one thus obtains that
\begin{align*}
    \wh{L}(s,\cl{C},\psi)\eq\epsilon\thin i^{-|\mbf{k}|}\thin\psi(\fr{n}\fr{d}^2)\sum_{\chi\in\wh{G}}\alpha_\chi\thin\chi(\fr{n}\fr{d}^2)\wh{L}(k_0-s,A,\ov{\psi\chi})\eq\epsilon\thin i^{-|\mbf{k}|}\thin\psi(\fr{n}\fr{d}^2)\thin\wh{L}(k_0-s,\wt{\cl{C}},\ov{\psi}).
\end{align*}
\end{proof}

Recall that for each $1\leq\lambda\leq h$, $\cl{C}_\lambda=[\fr{t}_\lambda^{-1}\fr{d}]$ in $\Cl^+(F)$ and $\wt{\lambda}$ is the unique integer such that $[\fr{t}_\lambda\fr{t}_{\wt{\lambda}}\fr{n}]=[\clo_F]$ in $\Cl^+(F)$. Indeed, for each $1\leq\lambda\leq h$, $\wt{\cl{C}_\lambda}:=\cl{C}_\lambda^{-1}\cdot[\fr{n}\fr{d}^2]=\cl{C}_{\wt{\lambda}}$.

\begin{prop}
\label{fqnprop}
Fix $\lambda$ with $1\leq\lambda\leq h$ and let $\epsilon=\pm1$. Suppose that for any Hecke character $\psi_m$ defined in \Cref{hecke-char-sect}, both $\wh{L}(s,\cl{C}_\lambda,\psi_m)$ and $\wh{L}(s,\cl{C}_{\wt{\lambda}},\ov{\psi_m})$ have holomorphic continuation to the whole complex plane which are bounded in vertical strips and satisfy a functional equation
\begin{align}
\label{fqnequivclass}
    \wh{L}(s,\cl{C}_\lambda,\psi_m)\eq \epsilon \thin i^{-|\mbf{k}|}\thin\psi_m(\fr{n}\fr{d}^2)\thin\wh{L}(k_0-s,\cl{C}_{\wt{\lambda}},\ov{\psi_m}).
\end{align}
Then, $f_\lambda|_\mbf{k}\sm{0&-1\\q_\lambda&0}=\epsilon f_{\wt{\lambda}}$, where $q_\lambda$ is any totally positive generator of $\fr{t}_\lambda\fr{t}_{\wt{\lambda}}\fr{n}$.
\end{prop}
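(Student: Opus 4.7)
My plan is to adapt the classical Hecke--Weil Mellin transform argument, in the form developed by Doi--Naganuma, to the present setting of non-parallel weight and arbitrary narrow class number. Since both $f_\lambda|_\mbf{k} W_{q_\lambda}$ and $\epsilon f_{\wt{\lambda}}$ are holomorphic on $\clh^r$, it suffices to verify their equality on the totally imaginary locus $\{z = iy : y \in \real_+^r\}$, since the identity theorem then extends the conclusion to all of $\clh^r$. Writing $h_\mu(y) := f_\mu(iy)$ and unwinding the slash formula for $W_{q_\lambda} = \sm{0 & -1 \\ q_\lambda & 0}$, the target reduces to the pointwise identity
\[
h_\lambda\!\left(\tfrac{1}{q_\lambda y}\right) \eq \epsilon\thin i^{|\mbf{k}|}\thin q_\lambda^{\mbf{k}/2}\thin y^{\mbf{k}}\thin h_{\wt{\lambda}}(y) \qquad \text{for all } y \in \real_+^r.
\]

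To connect $h_\lambda$ to $\wh{L}(s,\cl{C}_\lambda,\psi_m)$, I plan to form the Mellin-type integral
\[
\Phi_\lambda(s, m) \deq \int_D h_\lambda(y)\thin y^{\mbf{s}}\thin \frac{dy_1 \cdots dy_r}{y_1 \cdots y_r}, \qquad \mbf{s} \deq s\thin\mbf{1} - \mbf{k}^\prime/2 + \nu_m,
\]
where $D \subseteq \real_+^r$ is a fundamental domain for the $\clo_+^\times$-action. The exponent $\mbf{s}$ is chosen so that the integrand is $\clo_+^\times$-invariant: the defining relations $\sum_j \nu_{m,j}\log\eps_l^{(j)} \in 2\pi i\inte$, the covariance $a_\lambda(\eps\xi)=\eps^{-\mbf{k}^\prime/2}a_\lambda(\xi)$, and $\N(\eps)=1$ combine to cancel the transformation of $y^\mbf{s}$ under units. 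Unfolding the Fourier expansion into $\clo_+^\times$-orbits extends the integration to all of $\real_+^r$ and reduces each contribution to a product of standard gamma integrals, so that after tracking the factors $\N(\fr{t}_\lambda)^{k_0/2-s}$, $\N(\fr{n}\fr{d}^2)^{s/2}$ and the character value $\psi_m(\fr{t}_\lambda\fr{d}^{-1})$, the integral becomes an explicit constant multiple of $\wh{L}(s,\cl{C}_\lambda,\psi_m)$. The analogous computation for $h_{\wt{\lambda}}$ gives a constant multiple of $\wh{L}(s,\cl{C}_{\wt{\lambda}},\ov{\psi_m})$.

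The functional equation \eqref{fqnequivclass} then transforms into an integral identity via the measure-preserving involution $y\mapsto 1/(q_\lambda y)$ on $\real_+^r$: this substitution sends $D$ to another fundamental domain for $\clo_+^\times$ (since $\clo_+^\times$ is stable under inversion) and exchanges the Mellin parameter $s$ with $k_0-s$. The factor $\psi_m(\fr{n}\fr{d}^2)$ appearing in \eqref{fqnequivclass}, together with $\psi_m((q_\lambda))=q_\lambda^{-\nu_m}$ and $(q_\lambda)=\fr{t}_\lambda\fr{t}_{\wt{\lambda}}\fr{n}$, absorbs the $q_\lambda^{\mbf{k}/2}$ prefactor and all cross-terms $\psi_m(\fr{t}_\mu\fr{d}^{-1})$, while $i^{|\mbf{k}|}$ emerges from $(iy)^{-\mbf{k}}=i^{-|\mbf{k}|}y^{-\mbf{k}}$ in the slash cocycle. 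Finally, Mellin inversion in $s$ (justified by the assumed boundedness of $\wh{L}$ in vertical strips via a contour shift) together with Fourier inversion over $m\in\inte^{r-1}$ recovers $h_\lambda$ pointwise, since the characters $\psi_m|_{\real_+^r}$ exhaust the Pontryagin dual of the compact unit torus $\clo_+^\times\bslash\{y\in\real_+^r:\prod_j y_j=1\}$ by Dirichlet's unit theorem.

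The main obstacle I foresee is the careful bookkeeping of the compounded explicit constants---the norms of $\fr{t}_\lambda$, $\fr{n}$, $\fr{d}$, the character values $\psi_m(\fr{t}_\lambda\fr{d}^{-1})$ and $\psi_m(\fr{n}\fr{d}^2)$, the factor $q_\lambda^{\mbf{k}/2}$, and the gamma factors---so that the functional equation of $\wh{L}$ aligns exactly with the Fricke relation, with the net residual prefactor being precisely $\epsilon i^{-|\mbf{k}|}$ and nothing more. Convergence of the Mellin integrals is controlled by rapid decay of the Fourier expansion as $y_j\to\infty$ and by contour shifts past the poles of the gamma factors as $y_j\to 0^+$, using the polynomial bound $A(\fr{a})=O(\N(\fr{a})^c)$.
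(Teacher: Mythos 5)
Your proposal is correct and takes essentially the same route as the paper's proof, namely the Doi--Naganuma Mellin-transform argument adapted to the $\clo_+^\times$-quotient: both express $f_\lambda(iy)$ via a double transform (Mellin in $s$, Fourier in the unit-character parameter $m\in\inte^{r-1}$) against the completed partial $L$-series $\wh{L}(s,\cl{C}_\lambda,\psi_m)$, apply the functional equation, shift the contour, and conclude by analytic continuation. The only difference is organizational: you set up the forward Mellin transform over a fundamental domain for $\clo_+^\times$ and then invert, whereas the paper (via its lemma on $\vphi_\beta$) constructs the double inverse transform directly by Poisson summation over the unit lattice, which is the same Fourier duality read in the other direction.
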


Before proceeding with the proof, we first make another list of notations that we adopt for convenience, which will mostly only appear in this section. Let $y=(y_1,\ldots,y_r)\in(\real_{>0})^r$, $\kappa=(\kappa_1,\ldots,\kappa_r)\in\comp^r$, $s\in\comp$, and $\xi\in F$. We write
\begin{align*}
    iy&\eq(iy_1,\ldots,iy_r),\hspace{0.5cm}\frac{i}{\xi y}\eq\bigg(\frac{i}{\xi^{(1)}y_1},\ldots,\frac{i}{\xi^{(r)}y_r}\bigg) \\
    y^s&\eq\prod_{j=1}^ry_j^s,\hspace{0.5cm} y^\kappa\eq\prod_{j=1}^ry_j^{\kappa_j},\hspace{0.5cm} y^{s+\kappa}\eq y^s\cdot y^\kappa, \hspace{0.5cm}
    \frac{\dif y}{y}\eq\prod_{j=1}^r\frac{\dif y_j}{y_j} \\
    \tr(\xi y)&\eq\sum_{j=1}^r \xi^{(j)}y_j, \hspace{0.5cm}\tr(\kappa y)\eq\sum_{j=1}^r\kappa_j y_j
\end{align*}
whenever they make sense. Throughout this section, we also adopt the notations $\psi_m$ and $\nu_m$ defined in \Cref{hecke-char-sect}. Note that this means we have already fixed a basis $\{\eps_1,\ldots,\eps_{r-1}\}$ of~$\clo_+^\times$. 
\\

Now, we return to the proof of \Cref{fqnprop}. We start by showing some lemmas, which essentially reproduces the work of Doi--Naganuma in \cite[Section~3]{doinaganuma} in a more general setting.

\begin{lemma}
\label{gammafunction}
Let $a=(a_j)\in(\real_{>0})^r$ and $b=(b_j)\in\comp^r$. Then, for $\sigma\in\real$ sufficiently large, 
\begin{align*}
    &\frac{1}{2\pi i}\int_{\sigma-i\infty}^{\sigma+i\infty}\prod_{j=1}^r a_j^{-s+b_j}\;\Gamma(s-b_j)\; \dif s\eq\\
    &\hspace{2cm}r^{-1}\;\Delta\int_{\real^{r-1}}\exp\bigg(-\sum\limits_{j=1}^r \big(a_j\prod\limits_{l=1}^{r-1}(\eps_l^{(j)})^{t_l}\big)\bigg)\;\prod_{j=1}^r\prod_{l=1}^{r-1}(\eps_l^{(j)})^{-t_lb_j}\;\dif t_1\cdots \dif t_{r-1},
\end{align*}
where
\begin{align*}
    \Delta\eq\begin{vmatrix}1&\log(\eps_1^{(1)})&\cdots&\log(\eps_{r-1}^{(1)})\\ 1&\log(\eps_1^{(2)})&\cdots&\log(\eps_{r-1}^{(2)})\\
    \vdots&\vdots & &\vdots \\ 1&\log(\eps_1^{(r)})&\cdots&\log(\eps_{r-1}^{(r)}) \end{vmatrix}.
\end{align*}
\end{lemma}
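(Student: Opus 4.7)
The plan is to reduce the identity to the classical Cahen--Mellin integral
\[
\frac{1}{2\pi i}\int_{\sigma-i\infty}^{\sigma+i\infty} a^{-u}\,\Gamma(u)\,du \eq e^{-a}\qquad (a>0,\;\sigma>0),
\]
by first ``splitting'' each Gamma factor via its Eulerian integral representation, and then collapsing the resulting multiplicative integral using coordinates adapted to the logarithmic embedding of the units.

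First I would apply $a_j^{b_j-s}\,\Gamma(s-b_j) = \int_0^\infty e^{-a_j y_j}\,y_j^{s-b_j}\,dy_j/y_j$ (valid for $\sigma > \max_j \Re b_j$) to each factor of the integrand. Then I would perform the change of variables $\log y_j = v + \sum_{l=1}^{r-1} t_l \log \eps_l^{(j)}$, whose Jacobian determinant is precisely the matrix appearing in the statement, giving $\prod_j dy_j/y_j = \Delta\,dv\,dt_1\cdots dt_{r-1}$ (after ordering the embeddings so that $\Delta > 0$). Since the $\eps_l$ have norm $1$, $\sum_j \log y_j = rv$, so the $s$-dependence of the integrand reduces to $e^{rvs}$ and the $t$-dependence decouples from $v$. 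Performing the $v$-integral at fixed $t$ and $s$ via the substitution $w = e^v S(t)$ with $S(t) := \sum_j a_j \prod_l (\eps_l^{(j)})^{t_l}$ produces $\Gamma\bigl(rs - \sum_j b_j\bigr)\,S(t)^{\sum_j b_j - rs}$; the remaining $s$-integral is itself a Cahen--Mellin integral (after $u = rs - \sum_j b_j$), evaluating to $r^{-1} e^{-S(t)}$. The residual exponential $\prod_l e^{-t_l \sum_j b_j \log \eps_l^{(j)}}$ equals $\prod_{j,l}(\eps_l^{(j)})^{-t_l b_j}$, which combined with the prefactor $\Delta/r$ yields exactly the RHS.

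The main technical obstacle is rigorously justifying the interchange of the (only conditionally convergent) Barnes integral with the iterated $(v,t)$-integrals. The cleanest route is to perform the $v$-integral first at fixed $s$, which is absolutely convergent once $\sigma$ is chosen large enough, thereby reducing the problem to a double integral in $(s,t)$ whose absolute convergence follows from the Stirling decay of $\Gamma(rs - \sum_j b_j)$ on vertical lines together with the exponential growth of $S(t)$ as $|t| \to \infty$ (which in turn uses Dirichlet's unit theorem, guaranteeing that $t \mapsto (\log\eps_l^{(j)})_l$ is injective and coercive). A secondary, cosmetic point is that the sign of $\Delta$ can always be arranged to be positive by an appropriate ordering of the embeddings $\sigma_j$ relative to the basis $\eps_1,\ldots,\eps_{r-1}$.
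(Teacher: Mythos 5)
Your proof is correct and takes essentially the same route as the paper: both start from the Eulerian integral representation of $\Gamma$, perform the same unit-lattice change of variables (your $e^v$ is the paper's $y_0$), use $\N(\eps_l)=1$ to decouple the $s$-dependence into a single variable, and close by Mellin inversion. The only bookkeeping difference is that you explicitly carry out the $v$-integral to produce $\Gamma(rs-\sum_j b_j)\,S(t)^{\sum_j b_j - rs}$ and then invoke the Cahen--Mellin formula, whereas the paper recognizes the residual $u$-integral as a Mellin transform pair and applies the inverse transform directly; your remark about fixing the sign of $\Delta$ is a legitimate point that the paper leaves implicit.
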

\begin{proof}
First, note that
\begin{align*}
    \prod_{j=1}^r a_j^{-s+b_j}\;\Gamma(s-b_j)\eq\prod_{j=1}^r\int_{\real_{>0}}e^{-a_jy_j}\; y_j^{s-b_j}\;\frac{\dif y_j}{y_j} \eq\int_{(\real_{>0})^r}e^{-\tr(ay)}\;y^{s-b}\;\frac{\dif y}{y}.
\end{align*}
We do a change of variables $(y_1,\ldots,y_r)\mapsto(y_0,t_1,\ldots,t_{r-1})$ as the following
\begin{align*}
    y_j\eq y_0\thin\prod_{l=1}^{r-1}(\eps_l^{(j)})^{t_l},
\end{align*}
where $y_0\in\real_{>0}$ and $t_l\in\real$. It is easy to check that
\begin{align*}
    y_1y_2\cdots y_r&\eq y_0^r \\
    \dif y_1\dif y_2\cdots \dif y_r&\eq \Delta\;y_0^{r-1}\;\dif y_0\;\dif t_1\cdots \dif t_{r-1}.
\end{align*}
Hence,
\begin{align*}
    \prod_{j=1}^r a_j^{-s+b_j}\;\Gamma(s-b_j)&\eq\int_{(\real_{>0})^r}e^{-\tr(ay)}\;y^{s-b}\;\frac{\dif y}{y} \\
    &\eq\Delta\int_{\real_{>0}}\bigg(\int_{\real^{r-1}}\exp\bigg(-\sum\limits_{j=1}^r a_jy_j\bigg)y^{-b}\;\dif t_1\cdots \dif t_{r-1}\bigg)y_0^{rs}\;\frac{\dif y_0}{y_0} \\
    &\eq r^{-1}\;\Delta\int_{\real_{>0}}\bigg(\int_{\real^{r-1}}\exp\bigg(-\sum\limits_{j=1}^r a_jy_j\bigg)y^{-b}\;\dif t_1\cdots \dif t_{r-1}\bigg)u^s\;\frac{\dif u}{u},
\end{align*}
where we still keep the variable $y_j=y_0\prod_{l=1}^{r-1}(\eps_l^{(j)})^{t_l}$ for convenience and do another change of variables $u=y_0^r$. By the inverse Mellin transform, we may pick $\sigma\in\real$ sufficiently large such that
\begin{align*}
    &\frac{1}{2\pi i}\int_{\sigma-i\infty}^{\sigma+i\infty}\prod_{j=1}^r a_j^{-s+b_j}\thin\Gamma(s-b_j)\thin u^{-s}\thin \dif s\eq r^{-1}\thin\Delta\int_{\real^{r-1}}\exp\bigg(-\sum\limits_{j=1}^r a_jy_j\bigg)y^{-b}\thin \dif t_1\cdots \dif t_{r-1},
\end{align*}
where we still keep the variable $y_j=u^{1/r}\prod_{l=1}^{r-1}(\eps_l^{(j)})^{t_l}$ for convenience. Plugging in $u=1$ on both sides then gives the result.
\end{proof}

Before stating the corollary, we write
\begin{align*}
    \eps(t)&\eq \prod_{l=1}^{r-1}\eps_l^{t_l} \\
    \eps(t)^c&\eq\prod_{j=1}^r\prod_{l=1}^{r-1}(\eps_l^{(j)})^{t_lc_j} \\
    \tr(a\eps(t))&\eq\sum_{j=1}^r \bigg(a_j\prod_{l=1}^{r-1}(\eps_l^{(j)})^{t_l}\bigg)
\end{align*}
for $t\in\real^{r-1}$, $c\in\real^r$, and $a\in\comp^r$ whenever they make sense.

\begin{corol}
\label{gammainversemellin-corol}
Fix $m\in\inte^{r-1}$ and let $\{\nu_{m,j}\}_{j=1}^r$ be defined as in \Cref{hecke-char-sect}. Let $\mbf{k}^\prime$ be as before. Let $a=(a_j)\in(\real_{>0})^r$. Then, 
\begin{equation}
\begin{aligned}
\label{gammainversemellin}
    &\frac{1}{2\pi i}\int_{\sigma-i\infty}^{\sigma+i\infty}\prod_{j=1}^r a_j^{-s+(k_j^\prime/2)-\nu_{m,j}}\;\Gamma\bigg(s-\frac{k_j^\prime}{2}+\nu_{m,j}\bigg)\thin \dif s\eq \\
    &\hspace{4cm}r^{-1}\;\Delta\int_{\real^{r-1}}e^{-\tr(a\eps(t))}\;\eps(t)^{-\mbf{k}^\prime/2}\thin e\bigg(\sum_{l=1}^{r-1}m_lt_l\bigg)\thin \dif t_1\cdots \dif t_{r-1}.
\end{aligned}
\end{equation}
\end{corol}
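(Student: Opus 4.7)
The plan is to apply \Cref{gammafunction} with the specific choice $b_j \eq k_j^\prime/2 - \nu_{m,j}$ for each $1 \leq j \leq r$. Under this substitution the left-hand side of \Cref{gammafunction} is precisely the left-hand side of \eqref{gammainversemellin}, and the exponential factor $\exp\bigl(-\suml_{j=1}^r a_j\prodl_{l=1}^{r-1}(\eps_l^{(j)})^{t_l}\bigr)$ appearing on the right-hand side of the lemma is, by construction, equal to $e^{-\tr(a\eps(t))}$. Therefore the only content left to verify is the identity $\prodl_{j=1}^r\prodl_{l=1}^{r-1}(\eps_l^{(j)})^{-t_l b_j} \eq \eps(t)^{-\mbf{k}^\prime/2}\cdot e(\suml_{l=1}^{r-1} m_l t_l)$.

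To check this I would split the exponent as $-t_l b_j \eq -t_l k_j^\prime/2 + t_l \nu_{m,j}$ and handle the two contributions separately. The first piece $\prodl_{j=1}^r\prodl_{l=1}^{r-1}(\eps_l^{(j)})^{-t_l k_j^\prime/2}$ is, by the very notational definition $\eps(t)^c \eq \prodl_{j=1}^r\prodl_{l=1}^{r-1}(\eps_l^{(j)})^{t_l c_j}$ introduced just before the corollary, equal to $\eps(t)^{-\mbf{k}^\prime/2}$. The second piece, rewritten in exponential form, becomes $\exp\bigl(\suml_{l=1}^{r-1} t_l\suml_{j=1}^r \nu_{m,j}\log\eps_l^{(j)}\bigr)$, and by the defining linear relation \eqref{nulinearrel} the inner sum equals $2\pi i m_l$, producing exactly the factor $e(\suml_{l=1}^{r-1} m_l t_l)$.

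I do not expect any real obstacle: the argument is a one-line unpacking of definitions combined with the linear relations \eqref{nulinearrel} that the tuple $\nu_m$ was constructed to satisfy. The purpose of the corollary is simply to repackage \Cref{gammafunction} in a form adapted to the Hecke character $\psi_m$, which is what will feed into the inverse Mellin transform applied to the functional equation of $\wh{L}(s,\cl{C}_\lambda,\psi_m)$ when deriving the transformation behavior of $\mbf{f}$ under the Fricke involution.
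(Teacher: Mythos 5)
Your proof is correct and matches the paper's own one-line proof exactly: apply \Cref{gammafunction} with $b_j=k_j^\prime/2-\nu_{m,j}$ and observe that $\eps(t)^{\nu_m}=e\big(\sum_{l=1}^{r-1}m_lt_l\big)$, the latter being precisely the consequence of \eqref{nulinearrel} that you spell out when handling the second piece of the exponent.
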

\begin{proof}
Apply \Cref{gammafunction} with $b_j=\frac{k_j^\prime}{2}-\nu_{m,j}$ and use that $\eps(t)^{\nu_m}=e\big(\sum_{l=1}^{r-1}m_lt_l\big)$ by \Cref{nulinearrel}.
\end{proof}

\begin{lemma}
\label{phibeta}
Let $\beta\in F_+$ be a totally positive element. Then,
\begin{equation}
\begin{aligned}
\label{phibetaeqn}
    \vphi_\beta(y)&\thin:=\thin\sum_{\eps\in\clo_+^\times}(\eps\beta)^{-\mbf{k}^\prime/2}\;e^{-2\pi\cdot\tr(\eps\beta y)} \\
    &\thin\;=\thin C\sum_{m\in\inte^{r-1}}\int_{\sigma-i\infty}^{\sigma+i\infty}(2\pi)^{-rs}\thin\N(\beta)^{-s}\thin\psi_m((\beta))\thin y^{-s+(\mbf{k}^\prime/2)-\nu_m}\prod_{j=1}^r\Gamma\bigg(s-\frac{k_j^\prime}{2}+\nu_{m,j}\bigg)\thin \dif s
\end{aligned}
\end{equation}
for $y\in(\real_{>0})^r$, where $C=\frac{r\Delta^{-1}(2\pi)^{|\mbf{k}^\prime|/2}}{2\pi i}$.
\end{lemma}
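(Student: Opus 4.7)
The plan is to reduce both sides to the same expression by applying Corollary \ref{gammainversemellin-corol} (to handle the Mellin transform) and then Poisson summation (to convert the sum over $m$ into a sum over units). First, I would apply Corollary \ref{gammainversemellin-corol} with $a_j = 2\pi\beta^{(j)}y_j$ and simplify. Using $\sum_{j=1}^r\nu_{m,j}=0$ (from \eqref{nulinearrel}) and the formula $\psi_m((\beta))=\beta^{-\nu_m}$ for totally positive $\beta$ (from Section \ref{hecke-char-sect}), the prefactor $\prod_j(2\pi\beta^{(j)}y_j)^{-s+k_j^\prime/2-\nu_{m,j}}$ factors cleanly as $(2\pi)^{-rs+|\mbf{k}^\prime|/2}\N(\beta)^{-s}\beta^{\mbf{k}^\prime/2}\psi_m((\beta))y^{-s+\mbf{k}^\prime/2-\nu_m}$. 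With the specific choice $C=r\Delta^{-1}(2\pi)^{|\mbf{k}^\prime|/2}/(2\pi i)$, one checks that the various $(2\pi)$, $\beta^{\mbf{k}^\prime/2}$, $r$, and $\Delta$ factors cancel, yielding for each fixed $m$ the identity
\begin{align*}
    C\int_{\sigma-i\infty}^{\sigma+i\infty}(2\pi)^{-rs}\N(\beta)^{-s}\psi_m((\beta))y^{-s+\mbf{k}^\prime/2-\nu_m}\prod_{j=1}^r\Gamma\bigg(s-\frac{k_j^\prime}{2}+\nu_{m,j}\bigg)ds \\
    \eq \beta^{-\mbf{k}^\prime/2}\int_{\real^{r-1}}e^{-2\pi\tr(\beta y\eps(t))}\eps(t)^{-\mbf{k}^\prime/2}\thin e\bigg(\sum_{l=1}^{r-1}m_lt_l\bigg)dt_1\cdots dt_{r-1}.
\end{align*}

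Next, I would sum both sides over $m \in \inte^{r-1}$ and apply the Poisson summation formula to $f(t):=e^{-2\pi\tr(\beta y\eps(t))}\eps(t)^{-\mbf{k}^\prime/2}$ viewed as a function on $\real^{r-1}$. Since each summand $\beta^{(j)}y_j\prod_l(\eps_l^{(j)})^{t_l}$ in the exponent of $f$ is strictly positive and grows doubly exponentially in some direction as $|t|\to\infty$ (because every $\eps_l^{(j)}$ is positive and, by Dirichlet's unit theorem, the matrix $(\log\eps_l^{(j)})$ has full rank), $f$ and all of its derivatives decay faster than any polynomial, so $f$ is Schwartz and Poisson summation is justified. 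This gives
\begin{align*}
    \sum_{m\in\inte^{r-1}}\int_{\real^{r-1}}f(t)\thin e\bigg(\sum_{l=1}^{r-1}m_lt_l\bigg)dt_1\cdots dt_{r-1} \eq \sum_{n\in\inte^{r-1}}f(n) \eq \sum_{n\in\inte^{r-1}}\eps(n)^{-\mbf{k}^\prime/2}e^{-2\pi\tr(\beta y\eps(n))}.
\end{align*}

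Finally, since the only totally positive root of unity in $F$ is $1$, the group $\clo_+^\times$ is torsion-free of rank $r-1$, and hence $\{\eps_1,\ldots,\eps_{r-1}\}$ being a basis means the map $n \mapsto \eps(n)=\prod_l\eps_l^{n_l}$ is a bijection $\inte^{r-1}\simarrow \clo_+^\times$. Thus the sum over $n$ equals $\sum_{\eps\in\clo_+^\times}\eps^{-\mbf{k}^\prime/2}e^{-2\pi\tr(\eps\beta y)}$, and after absorbing $\beta^{-\mbf{k}^\prime/2}$ into each summand one obtains the left-hand side of \eqref{phibetaeqn}. The only step that requires genuine care is the Schwartz decay of $f$ (and its Fourier transform) needed to justify Poisson summation, and the bookkeeping of the constants that defines $C$; both are routine once set up.
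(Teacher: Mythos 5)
Your proof is correct and uses exactly the same ingredients as the paper's: Corollary \ref{gammainversemellin-corol} to rewrite the inverse Mellin transform, Poisson summation on $\real^{r-1}$, and the bijection $\inte^{r-1}\simarrow\clo_+^\times$ furnished by the basis $\{\eps_1,\ldots,\eps_{r-1}\}$. The only difference is cosmetic — you argue from right-hand side to left-hand side while the paper starts with $\varphi_\beta(y)$ on the left, applies Poisson to convert $\sum_m S(m)$ into $\sum_m\wh{S}(m)$, and then invokes the corollary — and you additionally spell out the Schwartz decay needed to justify Poisson summation, which the paper leaves implicit.
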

\begin{proof}
Note that
\begin{align*}
    \vphi_\beta(y)\eq\sum_{\eps\in\clo_+^\times}(\eps\beta)^{-\mbf{k}^\prime/2}\;e^{-2\pi\cdot\tr(\eps\beta y)}\eq\beta^{-\mbf{k}^\prime/2}\sum_{m\in\inte^{r-1}}\eps(m)^{-\mbf{k}^\prime/2}\;e^{-2\pi\cdot\tr(\eps(m)\beta y)}.
\end{align*}
Write
\begin{align*}
    S(t)\deq \eps(t)^{-\mbf{k}^\prime/2}\;e^{-2\pi\cdot\tr(\eps(t)\beta y)}\hspace{0.5cm}\text{and}\hspace{0.5cm}\wh{S}(x)\deq\int_{\real^{r-1}}S(t)\; e\bigg(-\sum_{l=1}^{r-1}x_lt_l\bigg)\thin \dif t
\end{align*}
for $t,x\in\real^{r-1}$. Then, applying \Cref{gammainversemellin-corol} with $a=(2\pi\beta^{(j)}y_j)_j\in(\real_{>0})^r$, we have
\begin{align*}
    \wh{S}(-m)&\eq\int_{\real^{r-1}}\eps(t)^{-\mbf{k}^\prime/2}\;e^{-2\pi\cdot\tr(\eps(t)\beta y)}\; e\bigg(\sum_{l=1}^{r-1}m_lt_l\bigg)\thin \dif t_1\cdots \dif t_{r-1} \\
    &\;\stackrel{(\ref{gammainversemellin})}{=}\;\frac{r\Delta^{-1}}{2\pi i}\int_{\sigma-i\infty}^{\sigma+i\infty}\prod_{j=1}^r(2\pi\beta^{(j)}y_j)^{-s+(k_j^\prime/2)-\nu_{m,j}}\;\Gamma\bigg(s-\frac{k_j^\prime}{2}+\nu_{m,j}\bigg)\thin \dif s
\end{align*}
for $m\in\inte^{r-1}$. By the Poisson summation formula,
\begin{align*}
    \vphi_\beta(y)&\eq\beta^{-\mbf{k}^\prime/2}\sum_{m\in\inte^{r-1}}S(m)\eq\beta^{-\mbf{k}^\prime/2}\sum_{m\in\inte^{r-1}}\wh{S}(m) \\
    &\eq\frac{r\Delta^{-1}\beta^{-\mbf{k}^\prime/2}}{2\pi i}\sum_{m\in\inte^{r-1}}\int_{\sigma-i\infty}^{\sigma+i\infty}\prod_{j=1}^r(2\pi\beta^{(j)}y_j)^{-s+(k_j^\prime/2)-\nu_{m,j}}\;\Gamma\bigg(s-\frac{k_j^\prime}{2}+\nu_{m,j}\bigg)\thin \dif s \\
    &\eq C\sum_{m\in\inte^{r-1}}\int_{\sigma-i\infty}^{\sigma+i\infty}(2\pi)^{-rs}\;\N(\beta)^{-s}\;\psi_m((\beta))\;y^{-s+(\mbf{k}^\prime/2)-\nu_m}\prod_{j=1}^r\Gamma\bigg(s-\frac{k_j^\prime}{2}+\nu_{m,j}\bigg)\thin \dif s,
\end{align*}
where $C=\frac{r\Delta^{-1}(2\pi)^{|\mbf{k}^\prime|/2}}{2\pi i}$.
\end{proof}

We are now ready to prove \Cref{fqnprop}. Applying \Cref{phibeta} with $\beta=\xi\in(\fr{t}_\lambda\fr{d}^{-1})_+$ and writing $\fr{a}=\xi\fr{t}_\lambda^{-1}\fr{d}\in\cl{C}_\lambda$, we obtain that
\begin{align*}
    &\vphi_{\xi}(y)\;\stackrel{(\ref{phibetaeqn})}{=}\; \\
    &\hspace{0.5cm}C\sum_{m\in\inte^{r-1}}\int_{\sigma-i\infty}^{\sigma+i\infty}(2\pi)^{-rs}\;\N(\fr{a}\fr{t}_\lambda\fr{d}^{-1})^{-s}\;\psi_m(\fr{a}\fr{t}_\lambda\fr{d}^{-1})\;y^{-s+(\mbf{k}^\prime/2)-\nu_m}\prod_{j=1}^r\Gamma\bigg(s-\frac{k_j^\prime}{2}+\nu_{m,j}\bigg)\thin \dif s.
\end{align*}
Hence,
\begin{align}
    f_\lambda(iy)&\eq\N(\fr{t}_\lambda)^{k_0/2}\sum_{\xi\in(\fr{t}_\lambda\fr{d}^{-1})_+/\clo_+^\times}\sum_{\eps\in\clo_+^\times}A(\xi\fr{t}_\lambda^{-1}\fr{d})\thin(\xi\eps)^{-\mbf{k}^\prime/2}\thin e^{-2\pi \cdot\tr(\xi\eps y)} \nonumber \\
    &\eq\N(\fr{t}_\lambda)^{k_0/2}\sum_{\xi\in(\fr{t}_\lambda\fr{d}^{-1})_+/\clo_+^\times}A(\xi\fr{t}_\lambda^{-1}\fr{d})\thin\vphi_\xi(y) \nonumber \\
    &\eq C\thin\N(\fr{t}_\lambda)^{k_0/2}\sum_{\fr{a}\in\cl{C}_\lambda}\sum_{m\in\inte^{r-1}}\int_{\sigma-i\infty}^{\sigma+i\infty}(2\pi)^{-rs}\;\;\N(\fr{a}\fr{t}_\lambda\fr{d}^{-1})^{-s}\;\psi_m(\fr{a}\fr{t}_\lambda\fr{d}^{-1})\;A(\fr{a}) \nonumber \\
    &\hspace{7cm}\times y^{-s+(\mbf{k}^\prime/2)-\nu_m}\;\prod_{j=1}^r\Gamma\bigg(s-\frac{k_j^\prime}{2}+\nu_{m,j}\bigg)\thin \dif s \nonumber \\
    &\;\stackrel{(\ref{completedpartiallseries})}{=}\; C\sum_{m\in\inte^{r-1}}\int_{\sigma-i\infty}^{\sigma+i\infty}\psi_{m}(\fr{t}_\lambda\fr{d}^{-1})\;\N(\fr{n})^{-s/2}\;\N(\fr{t}_\lambda)^{(k_0/2)-s}\;y^{-s+(\mbf{k}^\prime/2)-\nu_m}\;\wh{L}(s,\cl{C}_\lambda,\psi_m)\; \dif s. \label{flambdaiy}
\end{align}
By the functional equations,
\begin{align*}
    &f_\lambda(iy)\;\stackrel{(\ref{fqnequivclass})}{=}\; \\
    &\hspace{0.5cm}\epsilon\thin i^{-|\mbf{k}|}\thin C\sum_{m\in\inte^{r-1}}\int_{\sigma-i\infty}^{\sigma+i\infty}\psi_m(\fr{n}\fr{t}_\lambda\fr{d})\;\N(\fr{n})^{-s/2}\;\N(\fr{t}_\lambda)^{(k_0/2)-s}\; y^{-s+(\mbf{k}^\prime/2)-\nu_m}\;\wh{L}(k_0-s,\cl{C}_{\wt{\lambda}},\psi_{-m})\; \dif s.
\end{align*}
By moving the line of the integral (which is possible under the assumed analytic properties of all $\wh{L}(s,\cl{C}_\lambda,\psi_m)$), we obtain that
\begin{align*}
    &f_\lambda(iy)\eq\\
    &\hspace{0.5cm}\epsilon\thin C\thin (iy)^{-\mbf{k}}\sum_{m\in\inte^{r-1}}\int_{\sigma-i\infty}^{\sigma+i\infty}\psi_m(\fr{n}\fr{t}_\lambda\fr{d})\;\N(\fr{n})^{(s-k_0)/2}\;\N(\fr{t}_\lambda)^{s-(k_0/2)}\;y^{s-(\mbf{k}^\prime/2)-\nu_m}\;\wh{L}(s,\cl{C}_{\wt{\lambda}},\psi_{-m})\; \dif s.
\end{align*}
On the other hand, applying the computations for $f_\lambda(iy)$ to $f_{\wt{\lambda}}(i/(q_\lambda y))$, we obtain that
\begin{align*}
    &\epsilon\thin q_\lambda^{-\mbf{k}/2}\thin (iy)^{-\mbf{k}}\thin f_{\wt{\lambda}}\bigg(\frac{i}{q_\lambda y}\bigg)\\
    \;\stackrel{(\ref{flambdaiy})}{=}\;&\epsilon\thin C\thin q_\lambda^{-\mbf{k}/2}\thin(iy)^{-\mbf{k}}\sum_{m\in\inte^{r-1}}\int_{\sigma-i\infty}^{\sigma+i\infty}\psi_{m}(\fr{t}_{\wt{\lambda}}\fr{d}^{-1})\;\N(\fr{n})^{-s/2}\;\N(\fr{t}_{\wt{\lambda}})^{(k_0/2)-s} \\
    &\hspace{8cm}\times(1/q_\lambda y)^{-s+(\mbf{k}^\prime/2)-\nu_m}\;\wh{L}(s,\cl{C}_{\wt{\lambda}},\psi_m)\; \dif s \\
    \eq&\epsilon\thin C\thin(iy)^{-\mbf{k}}\sum_{m\in\inte^{r-1}}\int_{\sigma-i\infty}^{\sigma+i\infty}\psi_{m}(q_\lambda^{-1}\fr{t}_{\wt{\lambda}}\fr{d}^{-1})\;\N(\fr{n})^{-s/2}\;\N(\fr{t}_{\wt{\lambda}})^{(k_0/2)-s}\;\N(q_\lambda)^{s-(k_0/2)} \\
    &\hspace{8cm}\times y^{s-(\mbf{k}^\prime/2)+\nu_m}\;\wh{L}(s,\cl{C}_{\wt{\lambda}},\psi_m)\; \dif s \\
    \eq&\epsilon\thin C\thin(iy)^{-\mbf{k}}\sum_{m\in\inte^{r-1}}\int_{\sigma-i\infty}^{\sigma+i\infty}\psi_{-m}(\fr{n}\fr{t}_\lambda\fr{d})\;\N(\fr{n})^{(s-k_0)/2}\;\N(\fr{t}_\lambda)^{s-(k_0/2)}\;y^{s-(\mbf{k}^\prime/2)+\nu_m}\;\wh{L}(s,\cl{C}_{\wt{\lambda}},\psi_m)\; \dif s,
\end{align*}
where we use that $(q_\lambda)=\fr{t}_\lambda\fr{t}_{\wt{\lambda}}\fr{n}$. Hence, by comparing the two equations, we obtain that
\begin{align*}
    f_\lambda(iy)\eq\epsilon\thin q_\lambda^{-\mbf{k}/2}\thin (iy)^{-\mbf{k}}\thin f_{\wt{\lambda}}\bigg(\frac{i}{q_\lambda y}\bigg)
\end{align*}
for $y\in(\real_{>0})^r$. By analytic continuation, this implies that
\begin{align*}
    f_\lambda(z)\eq \epsilon\thin q_\lambda^{-\mbf{k}/2}\thin z^{-\mbf{k}}\thin f_{\wt{\lambda}}\bigg(-\frac{1}{q_\lambda z}\bigg)\eq \epsilon f_{\wt{\lambda}}|_\mbf{k}\m{0&-1\\q_\lambda&0}.
\end{align*}
As $\mbf{k}\in(2\inte^+)$, this is equivalent to $f_\lambda|_\mbf{k}\sm{0&-1\\q_\lambda&0}=\epsilon f_{\wt{\lambda}}$.

\begin{corol}
\label{fqncorol}
If all the $L$-series $L(s,A,\psi)$ satisfy the analytic properties and the functional equations as given in \Cref{mainthm}, then 
\begin{enumerate}[label={\rm(\arabic*)}]
    \item $f_\lambda|_\mbf{k}W_{q_\lambda}=\epsilon f_{\wt{\lambda}}$ for all $1\leq \lambda\leq h$, where $W_{q_\lambda}:=\sm{0&-1\\q_\lambda&0}$ and $q_\lambda$ is any totally positive generator of $\fr{t}_\lambda\fr{t}_{\wt{\lambda}}\fr{n}$;
    \item $f_\lambda|_\mbf{k}A_\beta=f_\lambda$ for all $\beta\in\fr{t}_\lambda\fr{n}$, where $A_\beta:=\sm{1&0\\\beta&1}$.
\end{enumerate}
\end{corol}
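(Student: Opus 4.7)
The plan is to deduce both claims from machinery already set up in this section: \Cref{partialfqn} lifts the ``unramified'' functional equations of the global $L$-series to functional equations for each partial series $\wh{L}(s,\cl{C},\psi)$; \Cref{fqnprop} then converts those into the Fricke-eigenfunction relation on $\mbf{f}$; and \Cref{basic} supplies the translation invariance needed to promote the Fricke relation to invariance under the lower-triangular matrices $A_\beta$.

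For part (1), I apply \Cref{partialfqn} with $\cl{C}=\cl{C}_\lambda$ and $\psi=\psi_m$ for every $m\in\inte^{r-1}$, where $\psi_m$ are the unramified Hecke characters built in \Cref{hecke-char-sect}. Since $\wt{\cl{C}_\lambda}=\cl{C}_{\wt{\lambda}}$, this produces exactly the family of functional equations \eqref{fqnequivclass} required as input to \Cref{fqnprop}, along with the analytic continuation and vertical-strip boundedness. Its conclusion then gives $f_\lambda|_\mbf{k}W_{q_\lambda}=\epsilon f_{\wt{\lambda}}$ for every $\lambda$ and every totally positive generator $q_\lambda$ of $\fr{t}_\lambda\fr{t}_{\wt{\lambda}}\fr{n}$.

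For part (2), fix $\beta\in\fr{t}_\lambda\fr{n}$. Since $(q_\lambda)=\fr{t}_\lambda\fr{t}_{\wt{\lambda}}\fr{n}$, the element $\alpha\deq-\beta/q_\lambda$ lies in $\fr{t}_{\wt{\lambda}}^{-1}$, and a direct $2\times 2$ matrix computation yields the conjugation identity
\begin{align*}
    A_\beta \eq W_{q_\lambda}^{-1}\thin T^\alpha\thin W_{q_\lambda}.
\end{align*}
Because $\wt{\wt{\lambda}}=\lambda$ and $\fr{t}_\lambda\fr{t}_{\wt{\lambda}}\fr{n}=\fr{t}_{\wt{\lambda}}\fr{t}_{\wt{\wt{\lambda}}}\fr{n}$, I may choose $q_{\wt{\lambda}}=q_\lambda$; then applying part (1) to $\wt{\lambda}$ gives $f_{\wt{\lambda}}|_\mbf{k}W_{q_\lambda}=\epsilon f_\lambda$, which rearranges to $f_\lambda|_\mbf{k}W_{q_\lambda}^{-1}=\epsilon f_{\wt{\lambda}}$. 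Chaining slash operators,
\begin{align*}
    f_\lambda|_\mbf{k}A_\beta &\eq \bigl(f_\lambda|_\mbf{k}W_{q_\lambda}^{-1}\bigr)\big|_\mbf{k}T^\alpha W_{q_\lambda} \\
    &\eq \epsilon\thin f_{\wt{\lambda}}\big|_\mbf{k}T^\alpha W_{q_\lambda} \\
    &\eq \epsilon\thin f_{\wt{\lambda}}|_\mbf{k}W_{q_\lambda} \\
    &\eq f_\lambda,
\end{align*}
where the third equality uses \Cref{basic} (since $\alpha\in\fr{t}_{\wt{\lambda}}^{-1}$), and the last uses part (1) once more together with $\epsilon^2=1$.

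I do not anticipate a serious obstacle: everything is formal once \Cref{fqnprop} is in hand. The only point worth a moment of care is the choice $q_{\wt{\lambda}}=q_\lambda$, which is legitimate because both generate the same ideal $\fr{t}_\lambda\fr{t}_{\wt{\lambda}}\fr{n}$, and which is precisely what allows the two $W$-matrices to cancel cleanly at the end of the computation.
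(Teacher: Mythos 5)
Your proof is correct and follows essentially the same route as the paper: part (1) is deduced from \Cref{partialfqn} and \Cref{fqnprop}, and part (2) from the conjugation identity $A_\beta = W_{q_\lambda}^{-1}T^{-\beta/q_\lambda}W_{q_\lambda}$ together with \Cref{basic} and two applications of part (1). Your explicit justification that one may take $q_{\wt{\lambda}}=q_\lambda$ is a welcome spelling-out of a step the paper leaves implicit.
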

\begin{proof}
Here (1) follows from \Cref{partialfqn} and \Cref{fqnprop}. For (2), note that for any $\beta\in\fr{t}_\lambda\fr{n}$, we have $\beta/q_\lambda\in\fr{t}_{\wt{\lambda}}^{-1}$ and $A_\beta=W_{q_\lambda}^{-1}T^{-\beta/q_\lambda}W_{q_\lambda}$, so (2) then follows from an easy computation.
\end{proof}

\section{Generating sets of congruence subgroups}
\label{generating-sect}
In this section, we will recall the main theorem in \cite{vaserstein} and prove some important results on the generating sets of certain congruence subgroups, which will be used in the proof of \Cref{mainthm}. Throughout this section, we let $K$ denote a general number field to distinguish it from $F$, which we reserve for our totally real field of degree $r>1$.

We first recall the definition of Dedekind domains of arithmetic type from \cite{dedearith}, which was used in \cite{vaserstein}. Let $K$ be a number field and $\Sigma$ be a finite set of places of $K$ containing all the archimedean ones. Define
\begin{align*}
    \clo_{K,\Sigma}\deq\{\alpha\in K\mid \val_v(\alpha)\geq 0\text{ for all }v\notin \Sigma\}.
\end{align*}
A ring is called a \textit{Dedekind domain of arithmetic type} if it is of the form $\clo_{K,\Sigma}$ for some number field~$K$ and some finite set $\Sigma$ of places of $K$ containing all the archimedean ones.

Let $A=\clo_{K,\Sigma}$ be a Dedekind domain of arithmetic type for some $K$ and $\Sigma$ and let $I_1,I_2$ be two nonzero ideals of $A$. Define
\begin{align*}
    G(I_1,I_2)\deq\bigg\{\m{a&b\\c&d}\in\SL_2(A)\;\bigg|\; a-1,d-1\in I_1I_2,\;b\in I_1,\;c\in I_2\bigg\}
\end{align*}
and define $E(I_1,I_2)$ to be the subgroup of $\SL_2(A)$ generated by matrices of the form $\sm{1&b\\0&1}$ and $\sm{1&0\\c&1}$ for $b\in I_1$ and $c\in I_2$. Indeed, $E(I_1,I_2)\subseteq G(I_1,I_2)$, and $G(I_1,I_2)$ is a finite index subgroup of $\SL_2(A)$.

\begin{theorem}[Vaserstein \cite{vaserstein} \footnote{There is a small gap in Vaserstein's proof as the validity of \cite[Lemma~3]{vaserstein} was later called into question, but the proof was rewritten in \cite{liehl} without further controversy. One should see \cite{liehl} for more details.}]
\label{vaserstein}Let $A=\clo_{K,\Sigma}$ be a Dedekind domain of arithmetic type for some $K$ and $\Sigma$. Suppose that $A$ has infinitely many units, or equivalently, $\Sigma$ contains at least two places. Then, for any nonzero ideals $I_1,I_2$ of $A$, $E(I_1,I_2)$ is a normal and finite index subgroup of $G(I_1,I_2)$, and $E(A,A)=G(A,A)=\SL_2(A)$. Moreover, if $A$ is not an order in a totally imaginary number field, then $E(I_1,I_2)=G(I_1,I_2)$.
\end{theorem}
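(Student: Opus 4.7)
The plan is to reduce the theorem to two more elementary assertions: (i) the surjectivity $E(A,A)=\SL_2(A)$, proven by an explicit reduction exploiting the infinite unit group of $A$; and (ii) a Mennicke-symbol description of $E(I_1,I_2)\bslash G(I_1,I_2)$ that yields normality, finite index in $G(I_1,I_2)$, and (under the additional non-totally-imaginary hypothesis) the equality $E(I_1,I_2)=G(I_1,I_2)$.

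For (i), the goal is to reduce an arbitrary $\sm{a&b\\c&d}\in\SL_2(A)$ to the identity by elementary operations. The key arithmetic input is a \emph{stable range} lemma: for $c\neq 0$ and $a$ with $aA+cA=A$, there exists $u\in A$ such that $a+uc$ is a unit of $A$. This is where the hypothesis $|\Sigma|\geq 2$ enters decisively, forcing $A^\times$ to be infinite (of rank $|\Sigma|-1\geq 1$) and providing, via strong approximation at $\Sigma$, sufficient flexibility to navigate around the finite class group of $A$ when choosing $u$. Once $a+uc=v$ is a unit, left-multiplying by $\sm{1&u\\0&1}$ puts the matrix in the form $\sm{v&\ast\\c&\ast}$; a further elementary operation from the right clears the top-right-entry up to a unit, and after one more row operation the matrix reduces to $\mathrm{diag}(v,v^{-1})$, which is written as a product of four elementary matrices via the standard identity.

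For (ii), I would introduce the Mennicke symbol $\{c,a\}$ for coprime pairs with $c\in I_1I_2$ and $a\equiv 1\pmod{I_1I_2}$, defined as the coset of any completing matrix $\sm{a&b\\c&d}\in G(I_1,I_2)$ in $E(I_1,I_2)\bslash G(I_1,I_2)$. Checking that this is well-defined, bimultiplicative, and satisfies the expected ``reciprocity'' relations reduces to explicit manipulations with $2\times2$ matrices, repeatedly invoking (i) to recognise elements of $E(A,A)$ and absorb them into $E(I_1,I_2)$ by conjugation. The resulting abelian quotient is then identified, via class field theory for $A$, with a subquotient of a ray class group; this simultaneously yields normality of $E(I_1,I_2)$ in $G(I_1,I_2)$ and finiteness of the index.

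The main obstacle is the last assertion $E(I_1,I_2)=G(I_1,I_2)$ in the non-totally-imaginary case, which is equivalent to the triviality of the Mennicke symbol and constitutes the congruence subgroup theorem for $\SL_2$. The essential input is that the regulator of $A^\times$ is strictly positive in this case, producing enough ``universal'' units to trivialise any putative symbol through a Matsumoto-style identity. In the totally imaginary case the regulator degenerates, the metaplectic kernel is nontrivial, and the equality genuinely fails, which is why the hypothesis is sharp.
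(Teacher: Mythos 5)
The paper does not prove this theorem; it cites it from Vaserstein (and in the following remark notes that a gap in \cite[Lemma~3]{vaserstein} was later fixed in \cite{liehl}). So I am comparing your sketch against the known argument rather than against a proof in the paper.

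Your step (i) rests on a false lemma. You claim that for $c\neq 0$ and $a$ coprime to $c$ there exists $u\in A$ with $a+uc\in A^\times$, i.e.\ that $A$ has unit-$1$-stable range. This fails already for $A=\mathbb{Z}[\sqrt2]$. Take $c=5$, which is inert, so $A/5A\cong\mathbb{F}_{25}$ and $(A/5A)^\times$ is cyclic of order $24$. The image of $A^\times=\{\pm(1+\sqrt2)^n\}$ in $(A/5A)^\times$ is the cyclic subgroup generated by $1+\sqrt2$, which one checks has order $12$ (and contains $-1=(1+\sqrt2)^6$), hence index $2$. Choosing $a$ whose residue mod $5$ lies outside this index-$2$ subgroup gives a coprime pair $(a,5)$ for which no $u$ makes $a+5u$ a unit. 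So $A$ does \emph{not} have stable range $1$, and the identity $E(A,A)=\SL_2(A)$ cannot be obtained by the elementary reduction you describe. Indeed $\SL_2(A)=E_2(A)$ for such $A$ is precisely the hard content of Vaserstein's paper (and of its later corrections); the infinitude of $A^\times$ enters through a far subtler argument (roughly, producing, for a given non-unit $t$, a unit $v$ and an auxiliary element satisfying a carefully chosen congruence modulo $t$ — this was Vaserstein's contested Lemma~3), not through the false stable-range reduction. Without a correct substitute for your step (i), the Mennicke-symbol bookkeeping in step (ii) has nothing to bootstrap from.

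A secondary but substantive error is your explanation of the totally imaginary exception. The regulator of $A^\times$ does \emph{not} degenerate when $A$ is an order in a totally imaginary field with $|\Sigma|\geq 2$ — the unit rank is $|\Sigma|-1\geq 1$ and the regulator is strictly positive. What actually distinguishes the totally imaginary case is the absence of real (or non-complex archimedean) places: by Moore's theorem and the computation of the metaplectic kernel, the universal Mennicke symbol on a ring of $S$-integers is governed by the roots of unity $\mu_K$ unless there is a non-complex place in $\Sigma$, in which case the tame reciprocity forces the symbol to be trivial. So the dichotomy is about real places and the Hilbert symbol at infinity, not about the size of the regulator. Your proposal's high-level structure (generation by elementaries, then a Mennicke-symbol analysis of $E(I_1,I_2)\backslash G(I_1,I_2)$ with class-field-theoretic input) is in the right family, but the two ingredients you actually supply — the stable-range lemma and the regulator heuristic — are both wrong, and the real work of the theorem is exactly in filling those holes.
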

\begin{rmk}
A straightforward corollary is that if $K$ is a real quadratic field, then $\SL_2(\clo_K)$ is generated by matrices of the form $\sm{1&b\\0&1}$ and $\sm{1&0\\c&1}$ for \mbox{$b,c\in\clo_K$}. As mentioned in the introduction, together with the work by Doi--Naganuma \cite[Section~3]{doinaganuma}, this yields a converse theorem for Hilbert modular forms of level $1$, subject to the class number $1$ condition.
\end{rmk}

Now, we would like to apply \Cref{vaserstein} to our setting. We will be writing results in terms of general number fields so that they may be applied to more general cases. Let $K$ be a general number field. Define
\begin{align*}
    \GL_2^+(K)\deq\{\gamma\in\GL_2(K)\mid\sigma(\det\gamma)>0\text{ for all }\sigma:K\hookrightarrow\real\}.
\end{align*}
For nonzero ideals $\fr{t}$ and $\fr{m}$ of $\clo_K$, define
\begin{align*}
    \SG_1(\fr{m})&\deq\bigg\{\m{a&b\\c&d}\in\SL_2(\clo_K)\;\bigg|\; \m{a&b\\c&d}\equiv\m{1&*\\0&1}\pmod{\fr{m}}\bigg\} \\
    \Gamma_1(\fr{t},\fr{m})&\deq\bigg\{\m{a&b\\c&d}\in\GL_2(K)\;\bigg|\; a\in\clo_K,\;b\in\fr{t}^{-1},\;c\in\fr{t}\fr{m},\;d-1\in\fr{m},\;ad-bc\in\clo_K^\times\bigg\} \\
    \Gamma_1^+(\fr{t},\fr{m})&\deq\bigg\{\m{a&b\\c&d}\in\GL_2(K)\;\bigg|\; a\in\clo_K,\;b\in\fr{t}^{-1},\;c\in\fr{t}\fr{m},\;d-1\in\fr{m},\;ad-bc\in(\clo_K^\times)_+\bigg\}\\
    &\;\eq\Gamma_1(\fr{t},\fr{m})\cap\GL_2^+(K).
\end{align*}
Here $(\clo_K^\times)_+$ denotes the set of all totally positive units, i.e., $\eps\in\clo_K^\times$ such that $\sigma(\eps)>0$ for all $\sigma:K\hookrightarrow\real$.

\begin{lemma}
\label{sgamma1-generator}
Let $K\neq\rat$ be a number field that has at least one real place and let $\fr{m}$ be a nonzero ideal of $\clo_K$. Then, $\SG_1(\fr{m})$ is generated by matrices of the form $\sm{1&b\\0&1}$ and $\sm{1&0\\c&1}$ for $b\in\clo_K$ and $c\in\fr{m}$.
\end{lemma}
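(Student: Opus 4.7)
The plan is to recognize this lemma as a direct translation of \Cref{vaserstein} into the paper's notation. Taking $\Sigma$ to be the set of archimedean places of $K$, we have $\clo_K = \clo_{K,\Sigma}$, so $\clo_K$ is a Dedekind domain of arithmetic type. In Vaserstein's notation with $A=\clo_K$, $I_1=\clo_K$, and $I_2=\fr{m}$, the subgroup $E(\clo_K,\fr{m})$ is by definition generated by the matrices $\sm{1&b\\0&1}$ and $\sm{1&0\\c&1}$ for $b\in\clo_K$ and $c\in\fr{m}$, which is exactly the subgroup featured in the lemma. On the other hand,
\begin{align*}
    G(\clo_K,\fr{m}) \eq \bigg\{\m{a&b\\c&d}\in\SL_2(\clo_K)\;\bigg|\; a-1,d-1\in\fr{m},\; b\in\clo_K,\; c\in\fr{m}\bigg\},
\end{align*}
and since the condition $b\in\clo_K$ is automatic in $\SL_2(\clo_K)$, one has $G(\clo_K,\fr{m}) = \SG_1(\fr{m})$.

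Next I would verify the two hypotheses of \Cref{vaserstein}. Since $\deg K > 1$ and $K$ has at least one real place, $K$ admits at least two archimedean places: if $r_1 \geq 2$ this is clear, and if $r_1 = 1$ then $\deg K = 1 + 2r_2 > 1$ forces $r_2 \geq 1$. Hence $|\Sigma|\geq 2$, so by Dirichlet's unit theorem $\clo_K$ has infinitely many units. Moreover, $\clo_K$ is certainly not an order in a totally imaginary number field since $K$ itself has a real embedding. Applying \Cref{vaserstein} with these hypotheses therefore yields $E(\clo_K,\fr{m}) = G(\clo_K,\fr{m}) = \SG_1(\fr{m})$, which is the desired equality.

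There is no genuine obstacle in this argument: the lemma is essentially a repackaging of the strongest conclusion of Vaserstein's theorem in the paper's notation, and all the substantive work has already been carried out in the original proof of \Cref{vaserstein} (with the gap fixed in \cite{liehl}). The only content is the numerical check that the hypotheses of that theorem hold under the assumption $\deg K > 1$ together with the existence of a real place.
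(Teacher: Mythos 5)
Your proof is correct and follows exactly the same route as the paper: identify $\SG_1(\fr{m})$ with $G(\clo_K,\fr{m})$, verify the hypotheses of \Cref{vaserstein} (at least two archimedean places, not an order in a totally imaginary field), and conclude $E(\clo_K,\fr{m})=G(\clo_K,\fr{m})$. The paper's proof is terser but contains no additional ideas; you have merely spelled out the bookkeeping.
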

\begin{proof}
By the assumption on $K$, $K$ has at least two archimedean places and $\clo_K$ is not an order in a totally imaginary number field. Hence, we may apply \Cref{vaserstein} with $A=\clo_K$, $I_1=\clo_K$, and $I_2=\fr{m}$ to obtain the result.
\end{proof}

\begin{prop}
\label{gamma1-generator}
Let $K\neq\rat$ be a number field that has at least one real place and let $\fr{t}$ and $\fr{m}$ be nonzero ideals of $\clo_K$. Then, $\Gamma_1(\fr{t},\fr{tm})$ (resp. $\Gamma_1^+(\fr{t},\fr{tm})$) is contained in the group generated by matrices of the form $\sm{1&b\\0&1}$, $\sm{1&0\\c&1}$, and $\sm{\eps&0\\0&1}$ for $b\in\fr{t}^{-1}$, $c\in\fr{tm}$, and $\eps\in\clo_K^\times$ (resp. $\eps\in(\clo_K^\times)_+$).
\end{prop}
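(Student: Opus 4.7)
The plan is to reduce the assertion for $\Gamma_1(\fr{t},\fr{tm})$ to the analogous statement for $\SL_2(\clo_K)$, namely \Cref{sgamma1-generator}, by a single left-multiplication that clears the denominators in the upper-right entry.

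First, I would use the diagonal generators $E_\eps$ to reduce to the case $\det\gamma = 1$. Given $\gamma \in \Gamma_1(\fr{t},\fr{tm})$ with $\eps = \det\gamma \in \clo_K^\times$ (resp.~$(\clo_K^\times)_+$), the matrix $\gamma E_{\eps^{-1}}$ lies in $\Gamma_1(\fr{t},\fr{tm}) \cap \SL_2(K)$ (the determinant-$1$ part). Since $E_{\eps^{-1}}$ is itself a generator, it suffices to show that every $\gamma = \sm{a&b\\c&d}$ with $\det\gamma = 1$, $a\in\clo_K$, $b\in\fr{t}^{-1}$, $c\in\fr{t}^2\fr{m}$, $d\equiv 1\pmod{\fr{tm}}$ lies in the generated group.

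Next I would left-multiply by $T^{-b}$. A direct computation gives
\[
T^{-b}\gamma \;=\; \m{a-bc & b(1-d) \\ c & d}.
\]
Using $bc \in \fr{t}^{-1}\cdot\fr{t}^2\fr{m} = \fr{tm}\subseteq\clo_K$ and $b(1-d) \in \fr{t}^{-1}\cdot\fr{tm} = \fr{m}\subseteq\clo_K$, all four entries are integral. Moreover, the hypotheses $ad-bc=1$, $bc\equiv 0\pmod{\fr{tm}}$, and $d\equiv 1\pmod{\fr{tm}}$ force $a\equiv 1\pmod{\fr{tm}}$, so checking residues modulo $\fr{tm}$ shows $T^{-b}\gamma \in \SG_1(\fr{tm})$. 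By \Cref{sgamma1-generator}, $\SG_1(\fr{tm})$ is generated by matrices $\sm{1&b'\\0&1}$ and $\sm{1&0\\c'&1}$ with $b'\in\clo_K$ and $c'\in\fr{tm}$; since $\clo_K\subseteq\fr{t}^{-1}$, all of these are among the generators listed in the proposition. Hence $T^{-b}\gamma$ lies in the asserted group, and so does $\gamma = T^{b}\cdot (T^{-b}\gamma)$.

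The only place where one could get stuck is verifying the integrality of $b(1-d)$; an initial instinct is to try a conjugation $\gamma\mapsto \sm{1&0\\0&\eta}\gamma\sm{1&0\\0&\eta^{-1}}$ to make $b\eta^{-1}\in\clo_K$, but this requires $\eta$ with $(\eta)\supseteq\fr{t}^{-1}$, which does not exist when $\fr{t}$ is non-principal. The trick of left-multiplying by $T^{-b}$ bypasses this issue entirely: one exploits $d\equiv 1\pmod{\fr{t}}$ (which follows from $\fr{tm}\subseteq\fr{t}$) so that $b(1-d)$ is automatically integral regardless of the ideal class of $\fr{t}$. The totally positive version is proven identically, using $\eps\in(\clo_K^\times)_+$ at the determinant-reduction step.
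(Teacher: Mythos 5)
Your proof is correct, and it follows the same overall architecture as the paper: reduce to determinant $1$ using $E_\eps$, multiply by a single translation matrix to land in $\SG_1(\fr{tm})$, and invoke \Cref{sgamma1-generator}. The difference lies in which translation you use. The paper right-multiplies by $T^{bn}$ after first arguing that $(a)$ and $\fr{t}$ are coprime and then choosing $n$ with $an+1\in\fr{t}$, so that the new upper-right entry $b(an+1)$ becomes integral. Your choice of left-multiplying by $T^{-b}$ is a bit slicker: the upper-right entry becomes $b(1-d)$, which is integral directly from $d\equiv 1\pmod{\fr{tm}}$, without any coprimality argument or choice of auxiliary element. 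Both proofs then note that $a\equiv 1\pmod{\fr{tm}}$ (you via $ad-bc=1$ and $d\equiv 1$; the paper states it the same way) to verify the resulting matrix is in $\SG_1(\fr{tm})$. Your closing remark about the conjugation trick failing for non-principal $\fr{t}$ is a nice sanity check, though it is not needed for the argument.
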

\begin{proof}
Let $\gamma\in\Gamma_1(\fr{t},\fr{tm})$ or $\Gamma_1^+(\fr{t},\fr{tm})$. If we write $\eps=\det\gamma$, then we may replace $\gamma$ by $\sm{\eps^{-1}&0\\0&1}\gamma$ to assume that $\det\gamma=1$. In this way, it suffices to show that any $\gamma\in\Gamma_1(\fr{t},\fr{tm})$ with $\det\gamma=1$ is generated by matrices of the form $\sm{1&\alpha\\0&1}$ and $\sm{1&0\\\beta&1}$ for $\alpha\in\fr{t}^{-1}$ and $\beta\in\fr{tm}$.

Write $\gamma=\sm{a&b\\c&d}$. Note that the ideals $(a)$ and $\fr{t}$ must be coprime, as otherwise their common prime factor would divide $(ad-bc)$ (since $\fr{tm}|(bc)$), contradicting that $ad-bc=1$. Now, let $S$ be a set of representatives of $\clo_K/\fr{t}$. Since $(a)$ and $\fr{t}$ are coprime, $\{an+1|n\in S\}$ is also a set of representatives of $\clo_K/\fr{t}$. In particular, there exists $n\in\clo_K$ such that $an+1\in\fr{t}$. Note that
\begin{align*}
    \gamma\m{1&bn\\0&1}\eq\m{a&b(an+1)\\c&bcn+d}.
\end{align*}
Here, we have
\begin{enumerate}[label=(\arabic*)]
    \item $a\in\clo_K$;
    \item $b(an+1)\in\clo_K$ since $b\in\fr{t}^{-1}$ and $an+1\in\fr{t}$;
    \item $c\in\fr{t}^2\fr{m}\subseteq\fr{tm}$;
    \item $bcn+d-1\in\fr{tm}$ since $b\in\fr{t}^{-1}$, $c\in\fr{t}^2\fr{m}$, and $d-1\in\fr{tm}$.
\end{enumerate}
As $ad-bc=1$, we also have $a-1\in\fr{tm}$. Hence, $\gamma\sm{1&bn\\0&1}\in\SG_1(\fr{tm})$. The result then follows from \Cref{sgamma1-generator}.
\end{proof}

We will also prove a result for $\Gamma_0$ for the later purpose, the proof of which does not rely on \Cref{vaserstein} though. Define
\begin{align*}
    \SG_0(\fr{m})&\deq\bigg\{\m{a&b\\c&d}\in\SL_2(\clo_K)\;\bigg|\; c\in\fr{m}\bigg\} \\
    \Gamma_0(\fr{t},\fr{m})&\deq\bigg\{\m{a&b\\c&d}\in\GL_2(K)\;\bigg|\; a\in\clo_K,\;b\in\fr{t}^{-1},\;c\in\fr{t}\fr{m},\;d\in\clo_K,\;ad-bc\in\clo_K^\times\bigg\} \\
    \Gamma_0^+(\fr{t},\fr{m})&\deq\bigg\{\m{a&b\\c&d}\in\GL_2(K)\;\bigg|\; a\in\clo_K,\;b\in\fr{t}^{-1},\;c\in\fr{t}\fr{m},\;d\in\clo_K,\;ad-bc\in(\clo_K^\times)_+\bigg\}\\
    &\;\eq\Gamma_0(\fr{t},\fr{m})\cap\GL_2^+(K).
\end{align*}

\begin{prop}
\label{gamma0-generator}
Let $K$ be a number field. Let $\fr{t}$ and $\fr{m}$ be nonzero ideals of $\clo_K$ such that $\fr{t}$ is a \textbf{prime} ideal. Then, $\Gamma_0(\fr{t},\fr{m})$ (resp. $\Gamma_0^+(\fr{t},\fr{m})$) is generated by $\SG_0(\fr{tm})$ and matrices of the form $\sm{1&b\\0&1}$ and $\sm{\eps&0\\0&1}$ for $b\in\fr{t}^{-1}$ and $\eps\in\clo_K^\times$ (resp. $\eps\in(\clo_K^\times)_+$).
\end{prop}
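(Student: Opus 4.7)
The plan is to follow the template of \Cref{gamma1-generator}: reduce $\gamma$ into $\SG_0(\fr{tm})$ by right-multiplying with matrices from our generating set. Given $\gamma = \sm{a&b\\c&d} \in \Gamma_0(\fr{t},\fr{m})$ (or $\Gamma_0^+(\fr{t},\fr{m})$), I first set $\eps = \det\gamma$, which lies in $\clo_K^\times$ (resp.\ $(\clo_K^\times)_+$), and replace $\gamma$ by $\sm{\eps^{-1}&0\\0&1}\gamma$; this new matrix still lies in $\Gamma_0(\fr{t},\fr{m})$ and now has determinant $1$. It therefore suffices to prove that any $\gamma \in \Gamma_0(\fr{t},\fr{m})$ with $\det\gamma = 1$ lies in the group generated by $\SG_0(\fr{tm})$ together with the translations $\sm{1&\alpha\\0&1}$ for $\alpha \in \fr{t}^{-1}$.

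The key step is to find $\alpha \in \fr{t}^{-1}$ so that $\gamma \cdot \sm{1&\alpha\\0&1} \in \SG_0(\fr{tm})$. The only nontrivial condition to verify is $a\alpha + b \in \clo_K$, since $c\alpha \in \fr{tm}\cdot\fr{t}^{-1} = \fr{m} \subseteq \clo_K$ automatically gives $c\alpha + d \in \clo_K$. Because $\fr{t}$ is prime, $\fr{t}^{-1}/\clo_K$ is a one-dimensional $\clo_K/\fr{t}$-vector space, and multiplication by $a$ is an automorphism of this space precisely when $a \notin \fr{t}$. Thus the argument goes through immediately whenever $a \notin \fr{t}$.

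The main obstacle will be the case $a \in \fr{t}$, which I would reduce to the previous case by a preliminary right-multiplication by an element of $\SG_0(\fr{tm})$ of the form $\sm{1&0\\m_0&1}$. The determinant relation $ad - bc = 1$ with $a \in \fr{t}$ forces $bc \equiv -1 \pmod{\fr{t}}$; combined with $bc \in \fr{t}^{-1}\cdot\fr{tm} = \fr{m}$, this shows $\fr{m} \not\subseteq \fr{t}$ and hence $\val_\fr{t}(\fr{tm}) = 1$. I would then choose $m_0 \in \fr{tm}$ with $\val_\fr{t}(m_0) = 1$, noting that either $b \in \clo_K$ (in which case $\gamma$ already lies in $\SG_0(\fr{tm})$) or $\val_\fr{t}(b) = -1$, so that $\val_\fr{t}(bm_0) = 0$. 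Then the new upper-left entry $a + bm_0$ lies in $\clo_K$ (since $bm_0 \in \fr{m}$) but outside $\fr{t}$, while the new lower-left entry $c + dm_0$ remains in $\fr{tm}$; hence $\gamma\sm{1&0\\m_0&1}$ falls under the previous case. Essentially all of the delicate work lies in this last step, where the primality of $\fr{t}$ is used twice: once to get the clean description of $\fr{t}^{-1}/\clo_K$ as a one-dimensional vector space over a field, and once to translate the determinant congruence into the $\fr{t}$-adic valuation statement that selects $m_0$.
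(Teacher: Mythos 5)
Your proof is correct and, in the key step, genuinely different from the paper's. Both arguments begin the same way: reduce to $\det\gamma=1$, then (when $a\notin\fr{t}$) right-multiply by a translation $T^\alpha$ with $\alpha\in\fr{t}^{-1}$ to bring the $b$-entry into $\clo_K$. The paper produces such an $\alpha$ as $\alpha=bn$ where $an+1\in\fr{t}$, found by a coset-representative argument; your one-dimensional $\clo_K/\fr{t}$-vector space formulation of $\fr{t}^{-1}/\clo_K$ is an equivalent, arguably cleaner, phrasing of the same idea. The real divergence is in reducing the case $a\in\fr{t}$ to $a\notin\fr{t}$. The paper invokes Chebotarev's density theorem to find a prime $\fr{q}\neq\fr{t}$ in the class of $\fr{t}$, producing $l\in\fr{t}^{-1}$ with $(l)\fr{t}=\fr{q}$, and then shows $a+cl\notin\fr{t}$ by contradiction, so that left-multiplication by the generator $T^l$ escapes the bad case. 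You instead observe that $a\in\fr{t}$ together with $ad-bc=1$ and $bc\in\fr{m}$ forces $\val_\fr{t}(\fr{m})=0$, hence $\val_\fr{t}(\fr{tm})=1$; choosing $m_0\in\fr{tm}$ of exact $\fr{t}$-valuation $1$ and right-multiplying by $A_{m_0}\in\SG_0(\fr{tm})$ then moves $a$ out of $\fr{t}$ (unless $b\in\clo_K$, in which case $\gamma$ is already in $\SG_0(\fr{tm})$). Your route is more elementary and self-contained — it avoids the density theorem entirely and uses only local valuation arithmetic — whereas the paper's route sidesteps the case analysis on $\val_\fr{t}(\fr{m})$ at the cost of a non-constructive existence statement. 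Both are valid; yours buys elementarity, the paper's buys a slightly shorter case split.
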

\begin{proof}
Similarly as in the proof of \Cref{gamma1-generator}, it suffices to show that any $\gamma=\sm{a&b\\c&d}\in\Gamma_0(\fr{t},\fr{m})$ with $\det\gamma=1$ is generated by $\SG_0(\fr{tm})$ and matrices of the form $\sm{1&\alpha\\0&1}$ for $\alpha\in\fr{t}^{-1}$.

First, suppose that $a\in\fr{t}$. Let $\fr{q}\neq\fr{t}$ be another prime ideal with $[\fr{q}]=[\fr{t}]$ in the class group $\Cl(K)$ of $K$ (such $\fr{q}$ exists by Chebotarev's density theorem). Then, there exists $l\in K$ such that $(l)\fr{t}=\fr{q}$ (and this also implies that $l\in\fr{t}^{-1}$). Suppose for contradiction that $a+cl\in\fr{t}$. Then, $cl\in\fr{t}$ since $a\in\fr{t}$. Note that $(cl)=(c)\fr{t}^{-1}\fr{q}$, where $(c)\fr{t}^{-1}$ is an integral ideal since $c\in\fr{tm}$. As $\fr{t}|(cl)$ and $(\fr{t},\fr{q})=1$, this implies that $\fr{t}|(c)\fr{t}^{-1}$. Thus, $\fr{t}|(bc)$ since $b\in\fr{t}^{-1}$ and hence $\fr{t}|(ad-bc)$ since $a\in\fr{t}$ by the assumption, which leads to contradiction as $\fr{t}$ is a prime ideal while $ad-bc=\det\gamma=1$. Hence, $a+cl\notin\fr{t}$, so we may replace $\gamma$ by $\sm{1&l\\0&1}\gamma$ to assume that $a\notin\fr{t}$.

Now, $(a)$ and $\fr{t}$ are coprime, so similarly as in the proof of \Cref{gamma1-generator}, there exists $n\in\clo_K$ such that $an+1\in\fr{t}$. Then,
\begin{align*}
    \gamma\m{1&bn\\0&1}\eq\m{a&b(an+1)\\c&bcn+d}
\end{align*}
with
\begin{enumerate}[label=(\arabic*)]
    \item $a\in\clo_K$;
    \item $b(an+1)\in\clo_K$ since $b\in\fr{t}^{-1}$ and $an+1\in\fr{t}$;
    \item $c\in\fr{tm}$;
    \item $bcn+d\in\clo_K$ since $b\in\fr{t}^{-1}$, $c\in\fr{tm}$, and $d\in\clo_K$.
\end{enumerate}
Hence, $\gamma\sm{1&bn\\0&1}\in\SG_0(\fr{tm})$. The result then follows.
\end{proof}

\section{Invariance of $f_\lambda$ under $\Gamma_1^+(\fr{t}_\lambda,\fr{r}\fr{n})$}
\label{gamma1-inv}

We are now ready to give a proof of the first part of \Cref{mainthm}, i.e., recovering a Hilbert modular form assuming only the `unramified' functional equations.

\begin{prop}
\label{f_lambda-gamma1-inv}
Let $\{\fr{t}_1,\ldots,\fr{t}_h\}$ be a set of representatives of $\Cl^+(F)$. If all the $L$-series $L(s,A,\psi)$ satisfy the analytic properties and the functional equations as given in \Cref{mainthm}, then for each \mbox{$1\leq\lambda\leq h$}, the associated function $f_\lambda$ satisfies that $f_\lambda|_\mbf{k}\gamma=f_\lambda$ for all $\gamma\in\Gamma_1^+(\fr{t}_\lambda,\fr{t}_\lambda\fr{n})$.
\end{prop}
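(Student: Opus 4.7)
The plan is to reduce the claim to invariance under a small generating set by invoking \Cref{gamma1-generator}, and then to verify each generator using results already established.

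First I would apply \Cref{gamma1-generator} with the number field $K = F$ (which has degree $r > 1$ and is totally real, so in particular has a real place), with $\fr{t} = \fr{t}_\lambda$ and $\fr{m} = \fr{n}$. Note that the hypotheses of that proposition are met and $\fr{t}\fr{m} = \fr{t}_\lambda\fr{n}$, so the proposition tells us that $\Gamma_1^+(\fr{t}_\lambda,\fr{t}_\lambda\fr{n})$ is contained in the subgroup of $\GL_2^+(F)$ generated by the three families of matrices
\begin{align*}
    T^\alpha \eq \m{1 & \alpha \\ 0 & 1}, \quad A_\beta \eq \m{1 & 0 \\ \beta & 1}, \quad E_\eps \eq \m{\eps & 0 \\ 0 & 1},
\end{align*}
where $\alpha \in \fr{t}_\lambda^{-1}$, $\beta \in \fr{t}_\lambda \fr{n}$, and $\eps \in \clo_+^\times$. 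Since the slash operator is a right action, it suffices to verify $f_\lambda |_\mbf{k} g = f_\lambda$ for $g$ running over these generators.

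Invariance under $T^\alpha$ for $\alpha \in \fr{t}_\lambda^{-1}$ and under $E_\eps$ for $\eps \in \clo_+^\times$ is exactly the content of \Cref{basic}, which was established directly from the Fourier expansion of $f_\lambda$ and the assumption that the coefficients $a_\lambda(\xi)\xi^{\mbf{k}'/2}$ depend only on the coset $\xi\clo_+^\times$. Invariance under $A_\beta$ for $\beta \in \fr{t}_\lambda \fr{n}$ is precisely part~(2) of \Cref{fqncorol}, which in turn was deduced in the previous section from the `Fricke' relation $f_\lambda |_\mbf{k} W_{q_\lambda} = \epsilon f_{\wt{\lambda}}$ together with the translation invariance of $f_{\wt{\lambda}}$.

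Combining these three ingredients yields the proposition. The only nontrivial input is the generating result \Cref{gamma1-generator}, which ultimately relies on Vaserstein's theorem; so the genuine content of this step is already distilled in \Cref{generating-sect}, and the present proof is just an assembly of previously proven pieces. Consequently I do not expect any real obstacle here, merely bookkeeping to match the indices of the generating set to the invariance statements obtained in \Cref{basic} and \Cref{fqncorol}.
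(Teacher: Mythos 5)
Your proof is correct and matches the paper's proof exactly: both apply \Cref{gamma1-generator} with $\fr{t} = \fr{t}_\lambda$ and $\fr{m} = \fr{n}$ to reduce to the generators $T^\alpha$, $A_\beta$, $E_\eps$, and then dispose of each by \Cref{basic} and \Cref{fqncorol}. No differences worth noting.
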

\begin{proof}
By \Cref{basic} and \Cref{fqncorol}, we have that for all $1\leq\lambda\leq h$,
\begin{enumerate}[label=\rm{(\alph*)}]
    \item $f_\lambda|_\mbf{k}T^\alpha=f_\lambda$ for all $\alpha\in\fr{t}_\lambda^{-1}$, where $T^\alpha=\sm{1&\alpha\\0&1}$;
    \item $f_\lambda|_\mbf{k}E_\eps=f_\lambda$ for all $\eps\in\clo_+^\times$, where $E_\eps=\sm{\eps&0\\0&1}$;
    \item $f_\lambda|_\mbf{k}A_\beta=f_\lambda$ for all $\beta\in\fr{t}_\lambda\fr{n}$, where $A_\beta=\sm{1&0\\\beta&1}$. 
\end{enumerate}
As $F$ is a totally real field of degree $r>1$, it follows from \Cref{gamma1-generator} that $f_\lambda$ is invariant under $\Gamma_1^+(\fr{t}_\lambda,\fr{t}_\lambda\fr{n})$. 
\end{proof}

\begin{corol}
\label{f-gamma1-inv}
Let $\{\fr{t}_1,\ldots,\fr{t}_h\}$ be a set of representatives of $\Cl^+(F)$ and let $\fr{r}=\fr{t}_1\cdots\fr{t}_h$. If all the $L$-series $L(s,A,\psi)$ satisfy the analytic properties and the functional equations as given in \Cref{mainthm}, then the associated $h$-tuple $\mbf{f}=(f_1,\ldots,f_h)$ is a Hilbert modular form of weight $\mbf{k}$ and level $K_1(\fr{rn})$.
\end{corol}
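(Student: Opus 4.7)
The plan is to reduce this corollary directly to \Cref{f_lambda-gamma1-inv} via a straightforward ideal-theoretic observation. Recall that in the reformulation from \Cref{prelims}, to show that $\mbf{f}$ is an adelic Hilbert modular form of weight $\mbf{k}$ and level $K_1(\fr{m}\fr{n})$, it suffices to show that for each $1 \leq \lambda \leq h$, the holomorphic function $f_\lambda$ is a classical Hilbert modular form of weight $\mbf{k}$ and level $\Gamma_1^+(\fr{t}_\lambda, \fr{m}\fr{n})$.

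The key observation is the ideal divisibility $\fr{t}_\lambda \mid \fr{m}$, which is immediate from $\fr{m} = \fr{t}_1 \cdots \fr{t}_h$. Equivalently, $\fr{m}\fr{n} \subseteq \fr{t}_\lambda\fr{n}$ and $\fr{t}_\lambda \fr{m}\fr{n} \subseteq \fr{t}_\lambda^2 \fr{n}$. Unwinding the defining conditions of $\Gamma_1^+(\fr{t}_\lambda, \cdot)$, these containments translate directly into the group inclusion
\[
    \Gamma_1^+(\fr{t}_\lambda, \fr{m}\fr{n}) \;\subseteq\; \Gamma_1^+(\fr{t}_\lambda, \fr{t}_\lambda \fr{n}).
\]

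Now \Cref{f_lambda-gamma1-inv} already gives that $f_\lambda|_\mbf{k} \gamma = f_\lambda$ for every $\gamma \in \Gamma_1^+(\fr{t}_\lambda, \fr{t}_\lambda\fr{n})$, so the above inclusion immediately upgrades this to invariance under $\Gamma_1^+(\fr{t}_\lambda, \fr{m}\fr{n})$. Holomorphicity of $f_\lambda$ on $\clh^r$ follows from absolute convergence of the defining Fourier series (guaranteed by the polynomial bound $A(\fr{a}) = O(\N(\fr{a})^c)$), and the boundedness condition at the cusps is automatic by Koecher's principle since $r > 1$. This verifies that each $f_\lambda$ is a classical Hilbert modular form of the required level, and hence $\mbf{f}$ is an adelic Hilbert modular form of weight $\mbf{k}$ and level $K_1(\fr{m}\fr{n})$.

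There is no genuine obstacle here; all the analytic content has already been concentrated in \Cref{fqnprop} (which extracts the Fricke-type relation from the twisted functional equations) and in \Cref{gamma1-generator} (whose Vaserstein-based generating result does the group-theoretic heavy lifting). The corollary is effectively a bookkeeping step: by taking $\fr{m}$ to be the product of a full set of narrow class group representatives, one ensures that \emph{every} $\fr{t}_\lambda$ divides $\fr{m}$, which tightens the level at each $\lambda$ uniformly from $\Gamma_1^+(\fr{t}_\lambda, \fr{t}_\lambda \fr{n})$ down to the common level $\Gamma_1^+(\fr{t}_\lambda, \fr{m}\fr{n})$.
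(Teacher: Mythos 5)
Your proof is correct and follows essentially the same route as the paper: the paper's own proof likewise invokes \Cref{f_lambda-gamma1-inv} and then passes from level $\Gamma_1^+(\fr{t}_\lambda,\fr{t}_\lambda\fr{n})$ to $\Gamma_1^+(\fr{t}_\lambda,\fr{m}\fr{n})$ via the subgroup containment induced by $\fr{t}_\lambda\mid\fr{m}$, which you have spelled out more explicitly.
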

\begin{proof}
By \Cref{f_lambda-gamma1-inv}, each $f_\lambda$ is a classical Hilbert modular form of weight $\mbf{k}$ and level $\Gamma_1^+(\fr{t}_\lambda,\fr{t}_\lambda\fr{n})$, so in particular, it is also of level $\Gamma_1^+(\fr{t}_\lambda,\fr{r}\fr{n})$. The result then follows.
\end{proof}

The first part of \Cref{mainthm} now simply follows from \Cref{f-gamma1-inv} and \Cref{fhmf}. 

\section{The Euler product}
\label{eulerprodsection}
In this section, we will define the `Hecke operators at infinity', denoted as $T_{p,\cl{F}}^\infty$ (this notation will become clear later). We will prove some formalism for their actions on $f_\lambda$, and in the end, show that the shape of the Euler product of $L(s)$ implies that $f_\lambda$ is an eigenfunction of $T_{p,\cl{F}}^\infty$ for suitable $p$ and $\cl{F}$.

Throughout this section, we will fix the following data unless stated otherwise:
\begin{enumerate}[label=(\arabic*)]
    \item a fixed $\lambda$ with $1\leq \lambda\leq h$;
    \item a prime ideal $\fr{p}$ with $[\fr{p}]=[\clo_F]$ in $\Cl^+(F)$ and $\fr{p}\nmid\fr{n}\fr{d}^2$;
    \item a totally positive generator $p$ of $\fr{p}$;
    \item a set $\cl{F}$ of representatives of $\clo_F/\fr{p}$.
\end{enumerate}
From now on, we will always write `$(\fr{p},p,\cl{F})$ as before' whenever we mean a triple $(\fr{p},p,\cl{F})$ satisfying (2), (3), and (4) as above.

For a holomorphic function $g:\clh^r\rightarrow\comp$, define
\begin{align*}
    T_{p,\cl{F}}^\infty\thin g\deq \N(\fr{p})^{k_0/2-1}\bigg(g|_\mbf{k}\m{p&0\\0&1}+\sum_{\alpha\in\cl{F}}g|_\mbf{k}\m{1&\alpha\\0&p}\bigg).
\end{align*}
Explicitly, we have
\begin{align}
\label{explicithecke}
    (T_{p,\cl{F}}^\infty \thin g)(z)\eq p^{\mbf{k}+(\mbf{k}^\prime/2)-\bone}g(pz)+p^{\mbf{k}^\prime/2-\bone}\sum_{\alpha\in\cl{F}}g\big(\tfrac{z+\alpha}{p}\big),
\end{align}
where $\bone:=(1,1,\ldots,1)\in\inte^r$. 

Recall that $f_\lambda:\clh^r\rightarrow\comp$ is given by
\begin{align*}
    f_\lambda(z)\eq\sum_{\xi\in(\fr{t}_\lambda\fr{d}^{-1})_+}a_\lambda(\xi)\thin e(\tr(\xi z)),
\end{align*}
where $a_\lambda(\xi)=\N(\fr{t}_\lambda)^{k_0/2}A(\xi\fr{t}_\lambda^{-1}\fr{d})\xi^{-\mbf{k}^\prime/2}$ for $\xi\in(\fr{t}_\lambda\fr{d}^{-1})_+$. The following proposition gives the Fourier coefficients of $T_{p,\cl{F}}^\infty f_\lambda$ in terms of the Fourier coefficients of $f_\lambda$, which is analogous to the case of elliptic modular forms (see e.g. \cite[Proposition~5.2.2]{diamond-shurman}).

\begin{prop}
The Fourier series expansion of $T_{p,\cl{F}}^\infty f_\lambda$ is
\begin{align*}
    (T_{p,\cl{F}}^\infty\thin f_\lambda)(z)\eq\sum_{\xi\in(\fr{t}_\lambda\fr{d}^{-1})_+}b(\xi)\thin e(\tr(\xi z)),
\end{align*}
where
\begin{align*}
    b(\xi)\eq p^{\mbf{k}^\prime/2}\thin \big(a_\lambda(\xi p)+p^{\mbf{k}-\bone}\thin a_\lambda\big(\tfrac{\xi}{p}\big)\big)
\end{align*}
for $\xi\in(\fr{t}_\lambda\fr{d}^{-1})_+$, and $a_\lambda(\alpha):=0$ for $\alpha\notin(\fr{t}_\lambda\fr{d}^{-1})_+$ by convention.
\end{prop}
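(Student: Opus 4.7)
The plan is a direct Fourier-expansion computation starting from the explicit formula (\ref{explicithecke}) for $T_{p,\cl{F}}^\infty$. I would treat the two summands $p^{\mbf{k}+\mbf{k}'/2-\bone}\, f_\lambda(pz)$ and $p^{\mbf{k}'/2-\bone}\sum_{\alpha\in\cl{F}} f_\lambda((z+\alpha)/p)$ separately, substitute the Fourier expansion of $f_\lambda$, reindex the summation variable, and then add.

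The first summand is straightforward: substituting the Fourier expansion of $f_\lambda$ and reindexing $\eta=p\xi$ produces a sum over $\eta\in(\fr{p}\fr{t}_\lambda\fr{d}^{-1})_+\subseteq(\fr{t}_\lambda\fr{d}^{-1})_+$, which extends trivially to all $\eta\in(\fr{t}_\lambda\fr{d}^{-1})_+$ using the convention $a_\lambda=0$ off support. A short arithmetic on the power of $p$ shows that the resulting coefficient of $e(\tr(\eta z))$ equals $p^{\mbf{k}+\mbf{k}'/2-\bone}a_\lambda(\eta/p)=p^{\mbf{k}'/2}\cdot p^{\mbf{k}-\bone}a_\lambda(\eta/p)$, which is the $p^{\mbf{k}-\bone}a_\lambda(\eta/p)$ part of $b(\eta)$.

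The second summand carries the main content. Swapping the $\alpha$- and $\xi$-summations isolates the inner character sum $\sum_{\alpha\in\cl{F}} e(\tr(\xi\alpha/p))$ attached to the Fourier mode $e(\tr((\xi/p)z))$. The crucial observation is that because $\fr{t}_\lambda\subseteq\clo_F$, any $\xi\in\fr{t}_\lambda\fr{d}^{-1}$ automatically lies in $\fr{d}^{-1}$; therefore the character $\alpha\mapsto e(\tr(\xi\alpha/p))$ on $\clo_F$ is trivial on $\fr{p}=(p)\cdot\clo_F$ and descends to an additive character of $\clo_F/\fr{p}$. Orthogonality on this finite group then collapses the inner sum to $\N(\fr{p})=p^\bone$ precisely when the character is trivial (equivalently $\xi/p\in\fr{d}^{-1}$, i.e.\ $\xi\in\fr{p}\fr{d}^{-1}$) and to $0$ otherwise. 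After the substitution $\eta=\xi/p$, the surviving indices lie in $(\fr{p}^{-1}\fr{t}_\lambda\fr{d}^{-1}\cap\fr{d}^{-1})_+$, and a local-valuation check at every prime — using $\fr{p}\nmid\fr{d}$ (which comes from $\fr{p}\nmid\fr{n}\fr{d}^2$) together with $\fr{p}\nmid\fr{t}_\lambda$ — identifies this intersection with $(\fr{t}_\lambda\fr{d}^{-1})_+$. The total contribution of this summand is therefore $p^{\mbf{k}'/2-\bone}\cdot p^\bone\cdot a_\lambda(p\eta)=p^{\mbf{k}'/2}a_\lambda(p\eta)$, which supplies the other half of $b(\eta)$.

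The main obstacle is the character-sum orthogonality step combined with the lattice identification $(\fr{p}^{-1}\fr{t}_\lambda\fr{d}^{-1}\cap\fr{d}^{-1})_+=(\fr{t}_\lambda\fr{d}^{-1})_+$; the remaining manipulations are routine bookkeeping of the normalizing powers of $p$ and $\N(\fr{p})=p^\bone$. The potentially awkward case $\fr{p}\mid\fr{t}_\lambda$ breaks the last identification at $\fr{p}$ and must be handled separately, for instance by replacing $\fr{t}_\lambda$ with another representative of its narrow class coprime to $\fr{p}$, which is available by Chebotarev.
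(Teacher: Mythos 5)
Your proof follows essentially the same route as the paper: split the two summands of $T_{p,\cl{F}}^\infty f_\lambda$, substitute the Fourier expansion, and in the second summand use orthogonality of the additive character $\alpha\mapsto e(\tr(\xi\alpha/p))$ on $\clo_F/\fr{p}$ (well defined because $\xi\in\fr{d}^{-1}$). The power-of-$p$ bookkeeping checks out.

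Where you are actually more careful than the paper is in the final reindexing. The orthogonality criterion collapses the second summand to those $\xi$ with $\xi/p\in\fr{d}^{-1}$, and the set of surviving $\eta=\xi/p$ is $\fr{p}^{-1}\fr{t}_\lambda\fr{d}^{-1}\cap\fr{d}^{-1}$ (intersected with totally positives). The paper instead writes the vanishing criterion directly as $\xi/p\in(\fr{t}_\lambda\fr{d}^{-1})_+$, which is equivalent to $\xi/p\in\fr{d}^{-1}$ only when $\fr{p}\nmid\fr{t}_\lambda$ — exactly the local-valuation identification you flag. You are right that this hypothesis is needed for the stated form of the expansion and that the case $\fr{p}\mid\fr{t}_\lambda$ would add Fourier modes outside $(\fr{t}_\lambda\fr{d}^{-1})_+$; the paper leaves this implicit. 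In the paper's actual application (the proof of Proposition 7.1 via Lemma 7.2), the primes $p$ that are fed into this proposition additionally satisfy $\fr{p}\nmid(q_\lambda)$ with $(q_\lambda)=\fr{t}_\lambda\fr{t}_{\wt{\lambda}}\fr{n}$, so $\fr{p}\nmid\fr{t}_\lambda$ holds automatically and your remark about replacing the representative is not even needed there. Your proposal is correct and, on this one point, slightly sharper than the paper's own argument.
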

\begin{proof}
We have
\begin{align*}
    p^{\mbf{k}+(\mbf{k}^\prime/2)-\bone}f_\lambda(pz)\eq p^{\mbf{k}+(\mbf{k}^\prime/2)-\bone}\;\sum_{\mathclap{\xi\in(\fr{t}_\lambda\fr{d}^{-1})_+}}\;a_\lambda(\xi)\thin e(\tr(\xi pz))\eq p^{\mbf{k}+(\mbf{k}^\prime/2)-\bone}\;\sum_{\mathclap{\xi\in(\fr{t}_\lambda\fr{d}^{-1})_+}}\;a_\lambda\big(\tfrac{\xi}{p}\big)\thin e(\tr(\xi z))
\end{align*}
and
\begin{align*}
    p^{\mbf{k}^\prime/2-\bone}\sum_{\alpha\in\cl{F}}f\bigg(\frac{z+\alpha}{p}\bigg)&\eq p^{\mbf{k}^\prime/2-\bone}\sum_{\alpha\in\cl{F}}\sum_{\xi\in(\fr{t}_\lambda\fr{d}^{-1})_+}a_\lambda(\xi)\thin e\big(\tr\big(\tfrac{\xi(z+\alpha)}{p}\big)\big) \\
    &\eq p^{\mbf{k}^\prime/2-\bone}\sum_{\xi\in(\fr{t}_\lambda\fr{d}^{-1})_+}\bigg(\sum_{\alpha\in\cl{F}}e\big(\tr\big(\tfrac{\xi \alpha}{p}\big)\big)\bigg)\thin a_\lambda(\xi)\thin e\big(\tr\big(\tfrac{\xi z}{p}\big)\big) \\
    &\eq p^{\mbf{k}^\prime/2-\bone}\sum_{\substack{\xi\in(\fr{t}_\lambda\fr{d}^{-1})_+\\ \xi/p\in(\fr{t}_\lambda\fr{d}^{-1})_+}}|\clo_F/\fr{p}|\cdot a_\lambda(\xi)\thin e\big(\tr\big(\tfrac{\xi z}{p}\big)\big) \\
    &\eq p^{\mbf{k}^\prime/2} \sum_{\xi\in(\fr{t}_\lambda\fr{d}^{-1})_+}a_\lambda(\xi p)\thin e(\tr(\xi z)),
\end{align*}
where in the third line we use that
\begin{align*}
    \sum_{\alpha\in\cl{F}}e\big(\tr\big(\tfrac{\xi \alpha}{p}\big)\big)\eq\begin{cases} |\clo_F/\fr{p}|&\text{ if }\xi/p\in(\fr{t}_\lambda\fr{d}^{-1})_+; \\ 0&\text{ if }\xi/p\notin(\fr{t}_\lambda\fr{d}^{-1})_+, 
    \end{cases}
\end{align*}
which follows from viewing $\alpha\mapsto e(\tr(\xi\alpha/p))$ as a character on $\clo_F/\fr{p}$ since $\xi\in\fr{d}^{-1}$. The result then follows from \Cref{explicithecke}.
\end{proof}

\begin{prop}
\label{eigenfourier}
Let $C\in\comp$. Then, the following are equivalent:
\begin{enumerate}[label=\rm{(\roman*)}]
    \item $T^\infty_{p,\cl{F}} f_\lambda=Cf_\lambda$;
    \item $C\cdot A(\fr{a})=A(\fr{a}\fr{p})+\N(\fr{p})^{k_0-1}A(\fr{a}/\fr{p})$ for all nonzero integral ideals $\fr{a}\in\cl{C}_\lambda=[\fr{t}_\lambda^{-1}\fr{d}]$, where \mbox{$A(\fr{b}):=0$} if $\fr{b}$ is not an integral ideal by convention.
\end{enumerate}
\end{prop}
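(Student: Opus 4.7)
The plan is to use the Fourier expansion computed in the proposition immediately above. Both $T_{p,\cl{F}}^\infty f_\lambda$ and $A(\fr{p}) f_\lambda$ are holomorphic functions on $\clh^r$ with Fourier expansions supported on $(\fr{t}_\lambda\fr{d}^{-1})_+$; their $\xi$-th Fourier coefficients are $b(\xi) = p^{\mbf{k}^\prime/2}\bigl(a_\lambda(\xi p) + p^{\mbf{k}-\bone} a_\lambda(\xi/p)\bigr)$ and $A(\fr{p})\, a_\lambda(\xi)$ respectively. By uniqueness of Fourier coefficients, statement (i) is equivalent to the pointwise identity
\[
p^{\mbf{k}^\prime/2}\bigl(a_\lambda(\xi p) + p^{\mbf{k}-\bone}\, a_\lambda(\xi/p)\bigr) \;=\; A(\fr{p})\, a_\lambda(\xi)
\]
holding for every $\xi \in (\fr{t}_\lambda\fr{d}^{-1})_+$.

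Next I would substitute the defining formula $a_\lambda(\xi) = \N(\fr{t}_\lambda)^{k_0/2} A(\xi\fr{t}_\lambda^{-1}\fr{d})\, \xi^{-\mbf{k}^\prime/2}$, setting $\fr{a} := \xi\fr{t}_\lambda^{-1}\fr{d} \in \cl{C}_\lambda$ so that $\xi p$ and $\xi/p$ correspond to $\fr{a}\fr{p}$ and $\fr{a}/\fr{p}$. The common prefactor $\N(\fr{t}_\lambda)^{k_0/2}\xi^{-\mbf{k}^\prime/2}$ cancels from every term, and the remaining powers of $p$ simplify as $p^{\mbf{k}^\prime/2}\cdot p^{-\mbf{k}^\prime/2} = 1$ on the first term and $p^{\mbf{k}^\prime/2}\cdot p^{\mbf{k}-\bone}\cdot p^{\mbf{k}^\prime/2} = p^{\mbf{k}+\mbf{k}^\prime-\bone}$ on the second. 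Since $k_j + k_j^\prime = k_0$ for every $j$, we have $p^{\mbf{k}+\mbf{k}^\prime-\bone} = \N(p)^{k_0-1} = \N(\fr{p})^{k_0-1}$, so the identity reduces exactly to
\[
A(\fr{a}\fr{p}) + \N(\fr{p})^{k_0-1}\, A(\fr{a}/\fr{p}) \;=\; A(\fr{p})\, A(\fr{a}).
\]

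Finally, the map $\xi \mapsto \fr{a} = \xi\fr{t}_\lambda^{-1}\fr{d}$ is a surjection from $(\fr{t}_\lambda\fr{d}^{-1})_+$ onto $\cl{C}_\lambda$ (with fibers the $\clo_+^\times$-orbits), while the target identity depends on $\xi$ only through $\fr{a}$; hence the Fourier-coefficient identity for all $\xi$ is equivalent to the Hecke relation in (ii) for all $\fr{a} \in \cl{C}_\lambda$. The one piece of bookkeeping worth pointing out is the boundary case: $\xi/p \in (\fr{t}_\lambda\fr{d}^{-1})_+$ iff $\xi \in p\fr{t}_\lambda\fr{d}^{-1}$ iff $\fr{p} \mid \fr{a}$, which matches precisely the convention that $A(\fr{a}/\fr{p}) = 0$ when $\fr{a}/\fr{p}$ is not integral and the convention $a_\lambda(\xi/p) = 0$ when $\xi/p \notin (\fr{t}_\lambda\fr{d}^{-1})_+$. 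The proof is therefore essentially a direct comparison of Fourier coefficients, and the main (rather minor) obstacle is the careful tracking of the exponent $p^{\mbf{k}+\mbf{k}^\prime-\bone} = \N(\fr{p})^{k_0-1}$ together with the two parallel zero conventions.
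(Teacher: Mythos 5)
Your proposal is correct and follows the paper's proof essentially verbatim: compare Fourier coefficients of $T_{p,\cl{F}}^\infty f_\lambda$ and $A(\fr{p})f_\lambda$, substitute $a_\lambda(\xi)=\N(\fr{t}_\lambda)^{k_0/2}A(\xi\fr{t}_\lambda^{-1}\fr{d})\xi^{-\mbf{k}'/2}$, and use $k_j+k_j'=k_0$ to collapse the powers of $p$ to $\N(\fr{p})^{k_0-1}$. The paper only says ``standard computation'' where you spell out the exponent bookkeeping and the matching of the two zero conventions, which is a nice bit of extra care but not a different argument.
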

\begin{rmk}
Suppose moreover that the sequence $\{A(\fr{a})\}_{\fr{a}\subseteq\cl{O}_F}$ satisfies that $A(\fr{a})A(\fr{p})=A(\fr{ap})$ for all $\fr{a}$ with $\fr{p}\nmid\fr{a}$. Then, this proposition also implies that if $f_\lambda$ is a $T^\infty_{p,\cl{F}}$-eigenfunction, then its eigenvalue must be $A(\fr{p})$. This can be deduced by choosing some $\fr{a}\in\cl{C}_\lambda$ with $\fr{p}\nmid\fr{a}$ in (ii).
\end{rmk}
\begin{proof}
Write $b(\xi)$ as the Fourier coefficients of $T_{p,\cl{F}}^\infty f_\lambda$ as before. Note that (i) is equivalent to that for all $\xi\in(\fr{t}_\lambda\fr{d}^{-1})_+$,
\begin{align*}
    C\cdot a_\lambda(\xi)\eq b(\xi)\eq p^{\mbf{k}^\prime/2}\thin \big(a_\lambda(\xi p)+p^{\mbf{k}-\bone}\thin a_\lambda\big(\tfrac{\xi}{p}\big)\big).
\end{align*}
Using that
\begin{align*}
    a_\lambda(\xi)\eq\N(\fr{t}_\lambda)^{k_0/2}\thin A(\xi\fr{t}_\lambda^{-1}\fr{d})\thin \xi^{-\mbf{k}^\prime/2},
\end{align*}
this is equivalent to that
\begin{align*}
    C\cdot A(\xi\fr{t}_\lambda^{-1}\fr{d})\eq A\big((\xi\fr{t}_\lambda^{-1}\fr{d})\fr{p}\big)+\N(\fr{p})^{k_0-1}A\big((\xi\fr{t}_\lambda^{-1}\fr{d})/\fr{p}\big)
\end{align*}
for all $\xi\in(\fr{t}_\lambda\fr{d}^{-1})_+$. Finally, the latter is seen to be equivalent to (ii) by setting $\fr{a}=\xi\fr{t}_\lambda^{-1}\fr{d}$.
\end{proof}

Now, we return to the Euler product of $L(s)=L(s,A)$. Suppose that $A(\clo_F)=1$ and that $L(s)$ has a $\fr{p}$-partial Euler product as the following
\begin{align*}
    L(s)\eq L^\fr{p}(s)\;L_\fr{p}(s)
\end{align*}
where
\begin{align*}
    L_\fr{p}(s)\eq\sum_{j=0}^{\infty}A(\fr{p}^j)\cdot\N(\fr{p}^j)^{-s}\hspace{0.5cm}\text{and}\hspace{0.5cm}
    L^\fr{p}(s)\eq\sum_{\fr{p}\nmid\fr{a}}A(\fr{a})\cdot\N(\fr{a})^{-s}.
\end{align*}
Let $\cl{C}\in\Cl^+(F)$ be an equivalence class. Since $[\fr{p}]=[\clo_F]$ in $\Cl^+(F)$, the partial $L$-series $L(s,\cl{C})$ also has a $\fr{p}$-partial Euler product
\begin{align*}
    L(s,\cl{C})\eq L^\fr{p}(s,\cl{C})\;L_\fr{p}(s,\cl{C})
\end{align*}
such that 
\begin{align*}
    L_\fr{p}(s,\cl{C})\eq L_\fr{p}(s)\hspace{0.5cm}\text{and}\hspace{0.5cm}
    L^\fr{p}(s,\cl{C})\eq\sum_{\substack{\fr{a}\in\cl{C}\\\fr{p}\nmid\fr{a}}}A(\fr{a})\cdot\N(\fr{a})^{-s}.
\end{align*}

\begin{prop}
\label{localfactoreigenfunction}
Fix $\lambda$ with $1\leq\lambda\leq h$ and let $(\fr{p},p,\cl{F})$ be as before. Then, the following are equivalent
\begin{enumerate}[label=\rm{(\roman*)}]
    \item $T_{p,\cl{F}}^\infty f_\lambda=A(\fr{p})f_\lambda$;
    \item $L_\fr{p}(s,\cl{C}_\lambda)=(1-A(\fr{p})\N(\fr{p})^{-s}+\N(\fr{p})^{k_0-1-2s})^{-1}$ where $\cl{C}_\lambda=[\fr{t}_\lambda^{-1}\fr{d}]$.
\end{enumerate}
\end{prop}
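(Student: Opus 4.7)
The plan is to rewrite both conditions as identities among formal Dirichlet series and then compare them using the assumed $\fr{p}$-partial Euler factorization $L(s,\cl{C}_\lambda) = L^\fr{p}(s,\cl{C}_\lambda)\thin L_\fr{p}(s)$. By \Cref{eigenfourier}, (i) is already equivalent to the family of Hecke-type identities $A(\fr{a})A(\fr{p}) = A(\fr{a}\fr{p})+\N(\fr{p})^{k_0-1}A(\fr{a}/\fr{p})$ indexed by $\fr{a}\in\cl{C}_\lambda$. So the natural move is to multiply each such identity by $\N(\fr{a})^{-s}$ and sum over $\fr{a}\in\cl{C}_\lambda$, collapsing this infinite family into a single relation between the partial $L$-series $L(s,\cl{C}_\lambda)$ and $L^\fr{p}(s,\cl{C}_\lambda)$.

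The key bookkeeping exploits the hypothesis $[\fr{p}]=[\clo_F]$ in $\Cl^+(F)$: multiplication by $\fr{p}$ is a bijection from $\cl{C}_\lambda$ onto $\{\fr{b}\in\cl{C}_\lambda\mid\fr{p}\mid\fr{b}\}$, with inverse $\fr{b}\mapsto\fr{b}/\fr{p}$. This lets me re-index the sums coming from $A(\fr{a}\fr{p})$ and $A(\fr{a}/\fr{p})$ cleanly, producing (after a routine algebraic rearrangement) the single identity
\begin{align*}
    \big[1-A(\fr{p})\N(\fr{p})^{-s}+\N(\fr{p})^{k_0-1-2s}\big]\thin L(s,\cl{C}_\lambda)\eq L^\fr{p}(s,\cl{C}_\lambda).
\end{align*}
Substituting the Euler factorization and cancelling the common factor $L^\fr{p}(s,\cl{C}_\lambda)$ collapses this to $\big[1-A(\fr{p})\N(\fr{p})^{-s}+\N(\fr{p})^{k_0-1-2s}\big]\thin L_\fr{p}(s)=1$, which is exactly (ii). Each step in this chain is, in principle, reversible.

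The main obstacle is the legitimacy of cancelling $L^\fr{p}(s,\cl{C}_\lambda)$ in the direction (i)$\Rightarrow$(ii). So long as $L^\fr{p}(s,\cl{C}_\lambda)\not\equiv 0$ as a formal Dirichlet series, the cancellation is harmless and the equivalence goes through. In the degenerate case $L^\fr{p}(s,\cl{C}_\lambda)\equiv 0$ the factorization forces $L(s,\cl{C}_\lambda)\equiv 0$ and hence $f_\lambda\equiv 0$ from its Fourier expansion, so (i) is trivially verified while (ii) is purely a statement about the global factor $L_\fr{p}(s)$; this edge case plays no role in the application to \Cref{mainthm}, where only the direction (ii)$\Rightarrow$(i) is invoked and no cancellation is required (one simply multiplies through by $L^\fr{p}(s,\cl{C}_\lambda)$ and then by $\N(\fr{p})^s$ to recover the summed Hecke identity, whose individual coefficients can then be read off).
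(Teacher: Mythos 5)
Your argument is correct and follows essentially the same route as the paper: reduce (i) to the coefficient-wise Hecke relations via \Cref{eigenfourier}, then compare with (ii) using the assumed $\fr{p}$-partial Euler factorization $L(s,\cl{C}_\lambda)=L^\fr{p}(s,\cl{C}_\lambda)L_\fr{p}(s)$. The paper dispatches this second step as a ``standard computation'' and you unpack it as a concrete Dirichlet-series manipulation (multiply each Hecke relation by $\N(\fr{a})^{-s}$, sum over $\fr{a}\in\cl{C}_\lambda$, re-index using $[\fr{p}]=[\clo_F]$, then substitute the factorization and cancel $L^\fr{p}(s,\cl{C}_\lambda)$). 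An alternative, perhaps cleaner, packaging of the same content avoids the Dirichlet-series layer entirely: write $\fr{a}=\fr{b}\fr{p}^j$ with $\fr{p}\nmid\fr{b}$, use the multiplicativity encoded in the factorization to rewrite the Hecke relation for $\fr{a}$ as $A(\fr{b})\cdot\big(A(\fr{p}^j)A(\fr{p})-A(\fr{p}^{j+1})-\N(\fr{p})^{k_0-1}A(\fr{p}^{j-1})\big)=0$, and observe that (ii) is precisely the vanishing of the bracketed expression for all $j\geq 0$; this makes both implications transparent and makes explicit that ``reading off coefficients'' must be done ideal-by-ideal (via the factorization hypothesis), not merely norm-by-norm.

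Where you go genuinely beyond the paper is in flagging the degenerate case $L^\fr{p}(s,\cl{C}_\lambda)\equiv 0$, under which $f_\lambda\equiv 0$ and (i) holds vacuously while (ii) is an independent condition on the global $\fr{p}$-factor; strictly speaking (i)$\Rightarrow$(ii) can then fail, so the proposition as stated is not a full equivalence without excluding that case. This is a real (if minor) imprecision in the paper that its one-line proof glosses over. You are right that it is immaterial for \Cref{mainthm}: \Cref{eulercorol} only invokes (ii)$\Rightarrow$(i), which requires no cancellation — one multiplies (ii) through by $L^\fr{p}(s,\cl{C}_\lambda)$ (or equivalently applies the multiplicativity argument above) and recovers the Hecke relations for every $\fr{a}\in\cl{C}_\lambda$.
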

\begin{proof}
It is a standard computation that (ii) is equivalent to the condition (ii) in \Cref{eigenfourier} with $C=A(\fr{p})$. The result then follows from \Cref{eigenfourier}.
\end{proof}

\begin{corol}
\label{eulercorol}
If the $L$-series $L(s)$ satisfies the Euler product as given in \Cref{mainthm}, then for each $1\leq\lambda\leq h$, $T_{p,\cl{F}}^\infty f_\lambda=A(\fr{p})f_\lambda$ for all $(\fr{p},p,\cl{F})$ as before.
\end{corol}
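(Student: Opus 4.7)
The plan is to reduce the corollary directly to \Cref{localfactoreigenfunction}. Fix $\lambda$ and a triple $(\fr{p},p,\cl{F})$ as stipulated; by that proposition, it suffices to establish condition (ii), namely that
\begin{align*}
L_\fr{p}(s,\cl{C}_\lambda)\eq\big(1-A(\fr{p})\N(\fr{p})^{-s}+\N(\fr{p})^{k_0-1-2s}\big)^{-1},
\end{align*}
where $\cl{C}_\lambda=[\fr{t}_\lambda^{-1}\fr{d}]$. Since $\fr{p}\nmid\fr{n}\fr{d}^2$, the right-hand side is precisely $L_\fr{p}(s)$ by the Euler product hypothesis, so the task reduces to identifying the $\fr{p}$-local factor of the class-$\cl{C}_\lambda$ partial series with the $\fr{p}$-local factor of the full $L$-series.

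The key observation is that the hypothesis $[\fr{p}]=[\clo_F]$ in $\Cl^+(F)$ forces the class $\cl{C}_\lambda$ to be stable under multiplication by powers of $\fr{p}$: writing an arbitrary integral ideal as $\fr{a}=\fr{p}^j\fr{b}$ with $\fr{p}\nmid\fr{b}$, one has $[\fr{b}]=[\fr{a}]\cdot[\fr{p}]^{-j}=[\fr{a}]$ in $\Cl^+(F)$, so $\fr{a}\in\cl{C}_\lambda$ if and only if $\fr{b}\in\cl{C}_\lambda$. Combined with the multiplicativity of $A$ across coprime ideals, which is a formal consequence of the full Euler product $L(s)=\prod_\fr{q}L_\fr{q}(s)$, this yields
\begin{align*}
L(s,\cl{C}_\lambda)\eq\bigg(\sum_{j\geq 0}A(\fr{p}^j)\N(\fr{p})^{-js}\bigg)\cdot\bigg(\sum_{\substack{\fr{b}\in\cl{C}_\lambda\\\fr{p}\nmid\fr{b}}}A(\fr{b})\N(\fr{b})^{-s}\bigg)\eq L_\fr{p}(s)\cdot L^\fr{p}(s,\cl{C}_\lambda).
\end{align*}
By the uniqueness of the $\fr{p}$-partial Euler factorization, $L_\fr{p}(s,\cl{C}_\lambda)=L_\fr{p}(s)$, which has the shape required by condition (ii). Invoking \Cref{localfactoreigenfunction} then gives $T_{p,\cl{F}}^\infty f_\lambda=A(\fr{p})f_\lambda$, as desired.

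There is no real obstacle here, since all the substantial work has already been carried out in the derivation of \Cref{eigenfourier} and \Cref{localfactoreigenfunction}. The only point to keep straight is that the narrow class condition $[\fr{p}]=[\clo_F]$ in $\Cl^+(F)$ is precisely what lets the $\fr{p}$-local factor pass through to the partial Dirichlet series indexed by the fixed class $\cl{C}_\lambda$; this is exactly why restriction (2) on the primes $\fr{p}$ was imposed at the start of \Cref{eulerprodsection} and why only such primes need to be assumed in the partial Euler product (cf.\ Remark (iii) following \Cref{mainthm}).
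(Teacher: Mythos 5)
Your proposal is correct and matches the paper's approach: the paper asserts (just before \Cref{localfactoreigenfunction}) that $[\fr{p}]=[\clo_F]$ together with the Euler product gives $L_\fr{p}(s,\cl{C}_\lambda)=L_\fr{p}(s)$, and then \Cref{eulercorol} follows immediately from \Cref{localfactoreigenfunction}. You have simply made explicit the stability of $\cl{C}_\lambda$ under multiplication by $\fr{p}$ and the multiplicativity of $A$ that the paper leaves implicit, which is a helpful unpacking of the same reduction.
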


\section{Invariance of $f_\lambda$ under $\Gamma_0^+(\fr{t}_\lambda,\fr{n})$}
\label{gamma0-inv-sect}

In this section, we will prove the invariance of $f_\lambda$ under $\Gamma_0^+(\fr{t}_\lambda,\fr{n})$ under all the assumptions in \Cref{mainthm}. Throughout this section, we will always work under the assumptions on $L(s)$ in \Cref{mainthm}. The proof mainly uses the idea from \cite[Theorem~2.2]{conjhecke}. The original idea dealt with elliptic modular forms and we modify it to apply to Hilbert modular forms. As mentioned in the beginning, there is a small but easily fixable gap in the proof of \cite[Theorem~2.2]{conjhecke}, and we will provide our own fix at the end of this section. For the coherence of the discussion, we postpone one technical but important lemma to the end of this section, which is also closely related to our fix to the gap.

\begin{prop}
\label{gamma0-inv}
Let $\{\fr{t}_1,\ldots,\fr{t}_h\}$ be a set of representatives of $\Cl^+(F)$ such that each $\fr{t}_\lambda$ is a \textbf{prime} ideal. If the $L$-series $L(s)$ satisfies the assumptions on the analytic properties, the functional equations, and the Euler product in \Cref{mainthm}, then for each \mbox{$1\leq\lambda\leq h$}, the associated function $f_\lambda$ satisfies that $f_\lambda|_\mbf{k}\gamma=f_\lambda$ for all $\gamma\in\Gamma_0^+(\fr{t}_\lambda,\fr{n})$.
\end{prop}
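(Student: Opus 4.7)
The plan is to apply \Cref{gamma0-generator} to reduce the desired invariance to showing $f_\lambda|_\mbf{k}\gamma = f_\lambda$ for every $\gamma \in \SG_0(\fr{t}_\lambda\fr{n})$, and then to establish this by combining the Hecke eigenvalue relation from \Cref{eulercorol} with the $\Gamma_1^+$-invariance from \Cref{f_lambda-gamma1-inv}, in the spirit of \cite[Theorem~2.2]{conjhecke}. Concretely, since $\fr{t}_\lambda$ is assumed prime, \Cref{gamma0-generator} shows that $\Gamma_0^+(\fr{t}_\lambda, \fr{n})$ is generated by $\SG_0(\fr{t}_\lambda\fr{n})$ together with the translations $T^b$ (for $b \in \fr{t}_\lambda^{-1}$) and the diagonal matrices $E_\eps$ (for $\eps \in \clo_+^\times$); the latter two families already fix $f_\lambda$ by \Cref{basic}, so the only remaining task is $\SG_0(\fr{t}_\lambda\fr{n})$-invariance.

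To handle a fixed $\gamma = \sm{a&b\\c&d} \in \SG_0(\fr{t}_\lambda\fr{n})$, I would select a prime ideal $\fr{p}$ with $[\fr{p}] = [\clo_F]$ in $\Cl^+(F)$, $\fr{p} \nmid \fr{n}\fr{d}^2$, and additionally satisfying a congruence condition modulo $\fr{t}_\lambda\fr{n}$ dictated by $d$ (roughly, some totally positive generator $p$ of $\fr{p}$ lies in the class of $d$ modulo $\fr{t}_\lambda\fr{n}$, up to multiplication by $\clo_+^\times$). The existence of such a $\fr{p}$ is a Chebotarev-style refinement and will be the content of the technical lemma on prime ideals postponed to the end of this section. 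For this $\fr{p}$ and an appropriately chosen set $\cl{F}$ of representatives of $\clo_F/\fr{p}$, I would construct matrix identities of the shape $\sm{p&0\\0&1}\gamma = \gamma_0 \cdot M$ (and parallel identities involving the representatives $\sm{1&\alpha\\0&p}$), where $\gamma_0 \in \Gamma_1^+(\fr{t}_\lambda, \fr{t}_\lambda\fr{n})$ (and so fixes $f_\lambda$ by \Cref{f_lambda-gamma1-inv}) and $M$ is itself a Hecke coset representative of $T_{p,\cl{F}}^\infty$. Slashing and summing these identities over the full set of coset representatives, the eigenvalue relation $T_{p,\cl{F}}^\infty f_\lambda = A(\fr{p})\, f_\lambda$ from \Cref{eulercorol} telescopes the terms and forces $f_\lambda|_\mbf{k}\gamma = f_\lambda$.

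The hard part is twofold. First, the existence of a prime $\fr{p}$ satisfying simultaneously the narrow class condition $[\fr{p}] = [\clo_F]$ and the required residue condition modulo $\fr{t}_\lambda\fr{n}$ requires a careful Chebotarev-type density argument; this is precisely the locus of the small gap in the proof of \cite[Theorem~2.2]{conjhecke}, which we will patch at the end of this section by means of the postponed lemma on prime ideals. Second, the matrix-identity bookkeeping that aligns $\gamma$ with Hecke coset representatives modulo $\Gamma_1^+(\fr{t}_\lambda, \fr{t}_\lambda\fr{n})$ is more delicate than in the elliptic case of \cite{conjhecke}, both because the non-parallel weight $\mbf{k}$ forces careful tracking of the shift by $\mbf{k}^\prime$ appearing in $T_{p,\cl{F}}^\infty$, and because the $\Gamma_1^+$-invariance furnished by \Cref{f_lambda-gamma1-inv} carries an extra factor of $\fr{t}_\lambda$ in the level which must be absorbed into the choice of $\gamma_0$.
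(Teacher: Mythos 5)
Your proposal diverges from the paper's argument in an essential way, and as written it has a gap at the ``telescoping'' step. The paper establishes the $\SG_0(\fr{t}_\lambda\fr{n})$-invariance via a Conrey--Farmer-style multi-prime determinant argument: one fixes a full set of coset representatives $\gamma_1,\ldots,\gamma_u$ of $H\bslash \SG_0(\fr{t}_\lambda\fr{n})$ where $H=\SG_1(\fr{t}_\lambda\fr{n})$, forms the ``defects'' $g_i = f_\lambda|_\mbf{k}\gamma_i - f_\lambda$ with Fourier coefficients $\mu_i(\xi)$, and shows that the eigenvalue relation combined with the Fricke invariance (\Cref{primesum}) produces, for each prime, one linear relation among the $\mu_i(\xi)$. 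Since there are $u$ unknowns, one needs $u$ independent relations, hence $u$ distinct primes $p_1,\ldots,p_u$ (each chosen by the Dirichlet-type \Cref{dirichlet} to land in a prescribed coset) and the nonvanishing determinant from \Cref{nonzerodet} to conclude $\mu_i(\xi)=0$.

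The gap in your sketch is that the claimed matrix identities $M_j\gamma = \gamma_0^{(j)} M_{\sigma(j)}$ with $\gamma_0^{(j)}$ in the invariance group cannot all hold for a single prime $p$: working out the requirements on $\gamma_0^{(j)}$, the coset representative $\sm{p&0\\0&1}$ forces $p\equiv d \pmod{\fr{t}_\lambda\fr{n}}$ while the representatives $\sm{1&\alpha\\0&p}$ force conditions such as $a\equiv 1$ or $p\equiv a \pmod{\fr{t}_\lambda\fr{n}}$, which are incompatible unless $\gamma\in H$ already (i.e., the desired conclusion). What the eigenvalue relation together with the Fricke-conjugation identity $f_\lambda|_\mbf{k}[W_{q_\lambda}]\cl{T}_{p,\cl{F}}[W_{q_\lambda}^{-1}]=f_\lambda|_\mbf{k}\cl{T}_{p,\cl{F}}$ actually yields is the single linear relation of \Cref{primesum}, not the pointwise identity $f_\lambda|_\mbf{k}\gamma=f_\lambda$. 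So a single prime adapted to a single $\gamma$ is under-determined whenever $u>1$; this is precisely why the paper introduces the determinant machinery and the generalized \Cref{nonzerodet}. Your diagnosis of the gap in the cited Conrey--Farmer argument (and the idea of fixing it by Chebotarev together with a careful choice of residue classes) is, by contrast, on the right track and matches the spirit of the paper's postponed lemmas; but the central linear-algebra-over-several-primes step is missing from your proposal and cannot be replaced by a one-prime telescoping.
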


Before proceeding to the proof, we introduce one more notational convention that will be used in the proof. We will extend the slash operator to the group ring $\comp[\GL_2^+(F)]$ by setting
\begin{align*}
    g|_\mbf{k}\bigg(\sum_j a_j[\gamma_j]\bigg)\deq\sum_j a_j\thin g|_\mbf{k}\gamma_j
\end{align*}
for $a_j\in\comp$ and $\gamma_j\in\GL_2^+(F)$. Here we write $\gamma$ as an element in $\GL_2^+(F)$ and $[\gamma]$ as an element in $\comp[\GL_2^+(F)]$ to avoid confusion.
\\

Now, we go back to the proof. Recall that $\SG_0(\fr{t}_\lambda\fr{n})=\{\sm{a&b\\c&d}\in\SL_2(\clo_F)\mid c\in\fr{t}_\lambda\fr{n}\}$. We will first show that each $f_\lambda$ is invariant under $\SG_0(\fr{t}_\lambda\fr{n})$, for which there is no need to assume that each $\fr{t}_\lambda$ is a prime ideal.

Note that under the two assumptions in \Cref{mainthm}, we have obtained that for all $1\leq\lambda\leq h$,
\begin{enumerate}[label=\rm{(\alph*)}]
    \item $f_\lambda|_\mbf{k}T^\alpha=f_\lambda$ for all $\alpha\in\fr{t}_\lambda^{-1}$ (\Cref{basic});
    \item $f_\lambda|_\mbf{k}E_\eps=f_\lambda$ for all $\eps\in\clo_+^\times$ (\Cref{basic});
    \item $f_\lambda|_\mbf{k}W_{q_\lambda}=\epsilon f_{\wt{\lambda}}$ for any totally positive generator $q_\lambda$ of $\fr{t}_\lambda\fr{t}_{\wt{\lambda}}\fr{n}$ (\Cref{fqncorol});
    \item $f_\lambda|_\mbf{k}A_\beta=f_\lambda$ for all $\beta\in\fr{t}_\lambda\fr{n}$ (\Cref{fqncorol});
    \item $T_{p,\cl{F}}^\infty f_\lambda=A(\fr{p})f_\lambda$ for all $(\fr{p},p,\cl{F})$ as before (\Cref{eulercorol}).
\end{enumerate}
As $F$ is a totally real field of degree $r>1$, it follows from (a), (d), and \Cref{sgamma1-generator} that each $f_\lambda$ is already invariant under $\SG_1(\fr{t}_\lambda\fr{n})$, which is a finite index subgroup of $\SG_0(\fr{t}_\lambda\fr{n})$.

For the next lemma, we do a renormalization of the Hecke operators at infinity. Let $(\fr{p},p,\cl{F})$ be as before. Define
\begin{align*}
    \cl{T}_{p,\cl{F}}\deq\bm{p&0\\0&1}+\sum_{\alpha\in\cl{F}}\bm{1&\alpha\\0&p}.
\end{align*}
Indeed, $f_\lambda|_\mbf{k}\cl{T}_{p,\cl{F}}=\frac{A(\fr{p})}{\N(\fr{p})^{k_0/2-1}}f_\lambda$ by (e).

\begin{lemma}
\label{primesum}
Let $(\fr{p},p,\cl{F})$ be as before with $\fr{p}\nmid(q_\lambda)$. Let $\cl{F}^\times=\cl{F}\bslash\{0\}$. Then,
\begin{align*}
    \sum_{\alpha\in \cl{F}^\times}f_\lambda|_\mbf{k}([\gamma_{p,-\alpha}]-1)[T^{\alpha/p}]\eq 0,
\end{align*}
where $\gamma_{p,-\alpha}=\sm{p&-\alpha\\-\beta q_\lambda&\frac{\alpha\beta q_\lambda+1}{p}}$ and $\beta$ is the unique element in $\cl{F}^\times$ such that $p|\alpha\beta q_\lambda+1$.
\end{lemma}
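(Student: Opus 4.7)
The plan is to compute the two sums $\sum_{\alpha\in\cl{F}^\times}f_\lambda|_\mbf{k}\gamma_{p,-\alpha}T^{\alpha/p}$ and $\sum_{\alpha\in\cl{F}^\times}f_\lambda|_\mbf{k}T^{\alpha/p}$ separately and show that they agree. The starting point is the matrix identity
\begin{align*}
\gamma_{p,-\alpha}T^{\alpha/p}\eq\m{p&0\\-\beta q_\lambda&1/p}\eq W_{q_\lambda}^{-1}\m{1/p&\beta\\0&p}W_{q_\lambda},
\end{align*}
both parts of which follow from direct multiplication, the first using $p\mid\alpha\beta q_\lambda+1$. Throughout I would freely use that scalar matrices $sI$ with $s^2\in F_+$ act trivially on even-weight forms, so that in particular $W_{q_\lambda}^{-1}=(-q_\lambda^{-1}I)W_{q_\lambda}$ gives $f|_\mbf{k}W_{q_\lambda}^{-1}=f|_\mbf{k}W_{q_\lambda}$. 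In view of property (c) this yields $f_\lambda|_\mbf{k}W_{q_\lambda}^{-1}=\epsilon f_{\wt{\lambda}}$, and by the same reasoning applied to $\wt{\lambda}$ (choosing $q_{\wt{\lambda}}=q_\lambda$, legitimate since both generate $\fr{t}_\lambda\fr{t}_{\wt{\lambda}}\fr{n}$) one has $f_{\wt{\lambda}}|_\mbf{k}W_{q_\lambda}=\epsilon f_\lambda$.

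For the first sum I would peel off $W_{q_\lambda}^{-1}$ via the Fricke identity, reindex over the bijection $\alpha\leftrightarrow\beta$ on $\cl{F}^\times$ (a bijection because $\fr{p}\nmid(q_\lambda)$), and use the factorization $\sm{1/p&\beta\\0&p}=\sm{1/p&0\\0&1/p}\sm{1&\beta\\0&p}\sm{1&0\\0&p}$ with leading scalar acting trivially. The core sum becomes $\sum_{\beta\in\cl{F}^\times}f_{\wt{\lambda}}|_\mbf{k}\sm{1&\beta\\0&p}$, evaluated via $f_{\wt{\lambda}}|_\mbf{k}\cl{T}_{p,\cl{F}}=\lambda_p f_{\wt{\lambda}}$ with $\lambda_p=A(\fr{p})\N(\fr{p})^{1-k_0/2}$, after subtracting the $\beta=0$ term and the $\sm{p&0\\0&1}$ coset. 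Applying $\sm{1&0\\0&p}$ and then $W_{q_\lambda}$, using the identity $\sm{1&0\\0&p}W_{q_\lambda}=W_{q_\lambda}\sm{p&0\\0&1}$ together with $f_{\wt{\lambda}}|_\mbf{k}W_{q_\lambda}=\epsilon f_\lambda$ and the triviality of $\sm{p&0\\0&p}=pI$, the $\epsilon^2=1$ cancellation leaves
\begin{align*}
\sum_{\alpha\in\cl{F}^\times}f_\lambda|_\mbf{k}\gamma_{p,-\alpha}T^{\alpha/p}\eq\lambda_p f_\lambda|_\mbf{k}\m{p&0\\0&1}-f_\lambda-f_\lambda|_\mbf{k}\m{p^2&0\\0&1}.
\end{align*}

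For the second sum, the identity $\sm{1&\alpha\\0&p}\sm{p&0\\0&1}=\sm{p&\alpha\\0&p}=pI\cdot T^{\alpha/p}$ combined with the triviality of $pI$ gives $f_\lambda|_\mbf{k}T^{\alpha/p}=f_\lambda|_\mbf{k}\sm{1&\alpha\\0&p}\sm{p&0\\0&1}$. Summing over $\alpha\in\cl{F}$ and applying $f_\lambda|_\mbf{k}\cl{T}_{p,\cl{F}}=\lambda_p f_\lambda$ gives $\sum_{\alpha\in\cl{F}}f_\lambda|_\mbf{k}T^{\alpha/p}=\lambda_p f_\lambda|_\mbf{k}\sm{p&0\\0&1}-f_\lambda|_\mbf{k}\sm{p^2&0\\0&1}$; subtracting the $\alpha=0$ term $f_\lambda|_\mbf{k}T^0=f_\lambda$ reproduces exactly the right-hand side of the previous display, so the two sums coincide.

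The main obstacle I anticipate is combinatorial rather than conceptual: several scalar factors ($(1/p)I$, $pI$, $-q_\lambda^{-1}I$) and near-identical-looking matrices (e.g.\ $\sm{1&0\\0&p}$ versus $\sm{p&0\\0&1}$, appearing at Fricke-conjugate places) arise repeatedly, and one must carefully track every $\epsilon$ and every factor of $p$ so the cancellations between the two sums come out cleanly. The only analytic inputs are the Hecke eigenvalue relation and the Fricke functional equation, each used once on each side; everything else is matrix bookkeeping that relies crucially on the even-weight hypothesis.
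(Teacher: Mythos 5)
Your proof is correct and uses essentially the same ingredients and matrix identities as the paper (the Fricke relation $f_\lambda|_\mbf{k}W_{q_\lambda}=\epsilon f_{\wt{\lambda}}$, the Hecke eigenvalue relation for both $f_\lambda$ and $f_{\wt{\lambda}}$, and the conjugation of $\cl{T}_{p,\cl{F}}$ by $W_{q_\lambda}$). The paper organizes the computation by starting from $f_\lambda|_\mbf{k}\bigl([W_{q_\lambda}]\cl{T}_{p,\cl{F}}[W_{q_\lambda}^{-1}]-\cl{T}_{p,\cl{F}}\bigr)=0$, expanding both copies of $\cl{T}_{p,\cl{F}}$, and cancelling the common $\sm{p&0\\0&1}$ and $\sm{1&0\\0&p}$ terms before factoring the remainder as $\sum_\alpha([\gamma_{p,-\alpha}]-1)[T^{\alpha/p}]\sm{1&0\\0&p}$, whereas you compute the two sums $\sum_\alpha f_\lambda|_\mbf{k}\gamma_{p,-\alpha}T^{\alpha/p}$ and $\sum_\alpha f_\lambda|_\mbf{k}T^{\alpha/p}$ to a common closed form $\lambda_p f_\lambda|_\mbf{k}\sm{p&0\\0&1}-f_\lambda-f_\lambda|_\mbf{k}\sm{p^2&0\\0&1}$; this is the same cancellation rearranged, and the scalar-matrix bookkeeping you flag (that $W_{q_\lambda}^{-1}$ differs from $W_{q_\lambda}$ by the scalar $-q_\lambda^{-1}$, which acts trivially on even weights, and the $\alpha\leftrightarrow\beta$ bijection on $\cl{F}^\times$ using $\fr{p}\nmid(q_\lambda)$) is exactly what is needed and is handled correctly.
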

\begin{proof}
It is easy to check that $f_\lambda|_\mbf{k}[W_{q_\lambda}]\cl{T}_{p,\cl{F}}[W_{q_\lambda}^{-1}]=f_\lambda|_\mbf{k}\cl{T}_{p,\cl{F}}$. Hence,
\begin{align*}
    0&\eq f_\lambda|_\mbf{k}[W_{q_\lambda}]\cl{T}_{p,\cl{F}}[W_{q_\lambda}^{-1}]-f_\lambda|_\mbf{k}\cl{T}_{p,\cl{F}} \\
    &\eq f_\lambda|_\mbf{k}\sum_{\beta\in F^\times}\bm{p&0\\-\beta q_\lambda&1}-f|_\mbf{k}\sum_{\alpha\in\cl{F}^\times}\bm{1&\alpha\\0&p} \\
    &\eq f_\lambda|_\mbf{k}\sum_{\alpha\in\cl{F}^\times}\bigg(\bm{p&-\alpha\\-\beta q_\lambda&\frac{\alpha\beta q_\lambda+1}{p}}-1\bigg)[T^{\alpha/p}]\bm{1&0\\0&p}.
\end{align*}
The result then follows.
\end{proof}

We will now begin to extend the group that $f_\lambda$ is invariant under from $\SG_1(\fr{t}_\lambda\fr{n})$ to $\SG_0(\fr{t}_\lambda\fr{n})$. This method does not rely on the exact shape of $\SG_1(\fr{t}_\lambda\fr{n})$ but rather that it is a finite index subgroup containing $T^\alpha$ and $A_\beta$ for all $\alpha\in\clo_F$ and $\beta\in\fr{t}_\lambda\fr{n}$. We will thus simply write $H=\SG_1(\fr{t}_\lambda\fr{n})$ throughout the proof. However, it may be possible to simplify the proof using ad hoc properties of $\SG_1(\fr{t}_\lambda\fr{n})$.

Let $u=[\SG_0(\fr{t}_\lambda\fr{n}):H]$ and choose $\{\gamma_1,\ldots,\gamma_u\}$ such that $\SG_0(\fr{t}_\lambda\fr{n})=\bigsqcup_{i=1}^u H\gamma_i$. We may replace $\gamma_i$ by~$A_\beta\gamma_i$ or $T^\alpha\gamma_i$ so that $\gamma_i=\sm{a_i&b_i\\c_i&d_i}$ with $a_i,c_i\neq0$. Let $H_i=\gamma_i^{-1}H\gamma_i\cap H$ and
\begin{align*}
    g_i\eq f_\lambda|_\mbf{k}\gamma_i-f_\lambda
\end{align*}
for $1\leq i\leq u$. Then, $g_i$ is invariant under $H_i$ for $1\leq i\leq u$. 

\begin{lemma}
\label{gi-fourier-expansion}
There exists $m\in\inte^+$ such that $T^{\alpha m}\in H_i$ for all $\alpha\in\clo_F$ and all $1\leq i\leq u$.
\end{lemma}
\begin{proof}
Let $\{\delta_1,\ldots,\delta_r\}$ be a basis of $\clo_F$ over~$\inte$. For each $1\leq i\leq u$ and $1\leq l\leq r$, since \mbox{$[\SG_0(\fr{t}_\lambda\fr{n}):\gamma_i^{-1}H\gamma_i]=[\SG_0(\fr{t}_\lambda\fr{n}):H]<\infty$}, there exists $m_{il}\in\pos$ such that $\gamma_i^{-1}H\gamma_i(T^{\delta_l})^{m_{il}}=\gamma_i^{-1}H\gamma_i$. Set $m=\lcm_{1\leq i\leq u,1\leq l\leq r}\{m_{il}\}$. Then, \mbox{$\gamma_i^{-1}H\gamma_iT^{\alpha m}=\gamma_i^{-1}H\gamma_i$} for all $\alpha\in\clo_F$ and all $1\leq i\leq u$. The result then follows.
\end{proof}

By \Cref{gi-fourier-expansion}, each $g_i$ has a Fourier series expansion
\begin{align*}
    \big(f_\lambda|_\mbf{k}([\gamma_i]-1)\big)(z)\eq g_i(z)\eq\sum_{\xi\in\fr{d}^{-1}}\mu_i(\xi)\thin e\big(\tr\big(\tfrac{\xi z}{m}\big)\big)
\end{align*}
for some $\mu_i(\xi)\in\comp$.

Let $(\fr{p},p,\cl{F})$ be as before with $\fr{p}\nmid(q_\lambda)$. Write $\cl{F}^\times=\cl{F}\bslash\{0\}$. Then, by \Cref{primesum},
\begin{align*}
    0&\eq\sum_{\alpha\in\cl{F}^\times}f_\lambda|_\mbf{k}([\gamma_{p,-\alpha}]-1)[T^{\alpha/p}] \\
    &\eq\sum_{i=1}^u\sum_{\substack{\alpha\in\cl{F}^\times \\ \gamma_{p,-\alpha}\in H\gamma_i}}f_\lambda|_\mbf{k}([\gamma_i]-1)[T^{\alpha/p}] \\
    &\eq\sum_{i=1}^u\sum_{\substack{\alpha\in\cl{F}^\times \\ \gamma_{p,-\alpha}\in H\gamma_i}}\sum_{\xi\in\fr{d}^{-1}}\mu_i(\xi)\; e\big(\tr\big(\tfrac{\xi(z+\alpha/p)}{m}\big)\big) \\
    &\eq\sum_{\xi\in\fr{d}^{-1}}\bigg(\sum_{i=1}^u\mu_i(\xi)\sum_{\substack{\alpha\in\cl{F}^\times \\ \gamma_{p,-\alpha}\in H\gamma_i}}e\big(\tr\big(\tfrac{\xi\alpha}{pm}\big)\big)\bigg)\thin e\big(\tr\big(\tfrac{\xi z}{m}\big)\big).
\end{align*}
This implies that for all $\xi\in\fr{d}^{-1}$,
\begin{align}
\label{lambdaexposum}
    \sum_{i=1}^u\mu_i(\xi)\bigg(\sum_{\substack{\alpha\in\cl{F}^\times \\ \gamma_{p,-\alpha}\in H\gamma_i}}e\big(\tr\big(\tfrac{\xi\alpha}{pm}\big)\big)\bigg)=0.
\end{align}

We will now prove that $\mu_i(\xi)=0$ for all $\xi\in\fr{d}^{-1}\bslash\{0\}$ so that $g_i$ is a constant for each $1\leq i\leq u$. Fix an element $\xi\in\fr{d}^{-1}\bslash\{0\}$. For each $1\leq j\leq u$, write $\gamma_j=\sm{a_j&b_j\\c_j&d_j}$ as before. Note that $a_j\neq 0$, $c_j\neq 0$, and $((a_j),(c_j))=1$ by our choice of $\gamma_j$. By \Cref{dirichlet}, we may choose $r_j\in\clo_F$ for each $1\leq j\leq u$ such that
\begin{enumerate}[label=(P\arabic*)]
    \item each $p_j=a_j+r_jc_j$ is totally positive and $(p_j)$ is a prime of degree $1$;
    \item $(p_j)\nmid mq_\lambda\N(\fr{d})\fr{n}$ and $(p_j)\nmid\xi\fr{d}$ for all $1\leq j\leq u$;
    \item all $\N(p_1),\ldots,\N(p_u)$ are distinct rational primes.
\end{enumerate}

Let $\alpha_j=-b_j-r_jd_j$. Note that $((p_j),(\alpha_j))=1$ and $(p_j)$ is a prime of degree~$1$, so by \Cref{standardfp}, $\cl{F}_j:=\{a\alpha_j\mid a\in\inte,1\leq a\leq\N(p_j)-1\}$ is a set of representatives of $(\clo_F/(p_j))^\times$. For $\alpha\in\cl{F}_j$, we write
\begin{align*}
    \gamma_{p_j,-\alpha}\eq\m{p_j&-\alpha\\-\beta q_\lambda&\frac{\alpha\beta q_\lambda+1}{p_j}}
\end{align*}
as before, where $\beta\in\cl{F}_j$ is the unique element such that $p_j|\alpha\beta q_\lambda+1$. Define
\begin{align*}
    \cl{F}_{i,j}\eq \{\alpha\in \cl{F}_j\mid \gamma_{p_j,-\alpha}\in H\gamma_i\}.
\end{align*}
Note that $T^{r_j}\gamma_j=\sm{p_j&-\alpha_j\\**&*}$ so there exists some $t_j\in\fr{t}_\lambda\fr{n}$ such that $\gamma_{p_j,-\alpha_j}=\sm{1&0\\t_j&1}T^{r_j}\gamma_j$.
Then, in particular, $\alpha_j\in \cl{F}_{j,j}$ as $\gamma_{p_j,-\alpha_j}=A_{t_j}T^{r_j}\gamma_j\in H\gamma_j$, so $\cl{F}_{j,j}\neq\emptyset$. Applying $p=p_j$ and $\cl{F}^\times=\cl{F}_j$ to \Cref{lambdaexposum}, we obtain that
\begin{align}
\label{lambdaexposumij}
    \sum_{i=1}^u\mu_i(\xi)\bigg(\sum_{\alpha\in \cl{F}_{i,j}}e\big(\tr\big(\tfrac{\xi\alpha}{p_jm}\big)\big)\bigg)\eq 0.
\end{align}
We will rewrite this equation in order to apply \Cref{nonzerodet} below.

Let $q_j=\N(p_j)$ for each $1\leq j\leq u$. Define
\begin{align*}
    s_{i,j}\deq\{a\in\inte\mid 1\leq a\leq q_j-1, \;a\alpha_j\in \cl{F}_{i,j}\}\;\subseteq\;\{1,\ldots,q_j-1\}.
\end{align*}
Write $n_j=\tr\big(\xi\alpha_j\cdot\tfrac{q_j}{p_j}\big)\in\inte$. Note that $q_j\nmid m$ by (P2), and $q_j\nmid n_j$ by \Cref{npnmid} since $(p_j)\nmid \alpha_j\xi\fr{d}$ and $(p_j)\nmid(\N(\fr{d}))$. Then, for $a\in\{1,\ldots,q_j-1\}$, 
\begin{align*}
    \tr\bigg(\frac{\xi a\alpha_j}{p_jm}\bigg)\eq\frac{a}{mq_j}\cdot\tr\bigg(\xi\alpha_j \cdot\frac{q_j}{p_j}\bigg)\eq\frac{n_ja}{mq_j}
\end{align*}
and hence
\begin{align*}
    S_{i,j}\deq\sum_{\alpha\in \cl{F}_{i,j}}e\big(\tr\big(\tfrac{\xi\alpha}{p_jm}\big)\big)\eq\sum_{a\in s_{i,j}}e\big(\tr\big(\tfrac{\xi a\alpha_j}{p_jm}\big)\big)\eq\sum_{a\in s_{i,j}}e\big(\tfrac{n_ja}{mq_j}\big).
\end{align*}

To summarize, we obtain the data $(q_j,m_j=m,n_j,s_{i,j})_{i,j=1}^u$ which satisfies that
\begin{enumerate}[label=\rm{(\arabic*)}]
    \item all $q_j$ are distinct rational primes by (P1) and (P3);
    \item $q_j\nmid m_jn_j$ for all $j$ by (P1), (P2), and \Cref{npnmid};
    \item $q_j\nmid m_i$ for all $i$ and $j$ by (P2);
    \item $s_{i,j}\subseteq\{1,\ldots,q_j-1\}$ by the definition of $s_{i,j}$;
    \item $s_{i,j}\cap s_{i^\prime,j}=\emptyset$ for $i\neq i^\prime$ by the definition of $s_{i,j}$;
    \item $s_{i,i}\neq\emptyset$ for all $i$ as $\cl{F}_{i,i}\neq\emptyset$ for all $i$.
\end{enumerate}
Therefore, the data $(q_j,m_j=m,n_j,s_{i,j})_{i,j=1}^u$ satisfies the requirements in \Cref{nonzerodet}, so \mbox{$\det((S_{i,j})_{i,j=1}^u)\neq 0$}. By \Cref{lambdaexposumij},
\begin{align*}
    \sum_{i=1}^u\mu_i(\xi)\thin S_{i,j}\eq 0.
\end{align*}
for all $1\leq j\leq u$. This then implies that $\mu_i(\xi)=0$ for all $1\leq i\leq u$.

Note that $\xi\in\fr{d}^{-1}\bslash\{0\}$ is fixed but arbitrary. Hence, we obtain that $\mu_i(\xi)=0$ for all $\xi\in\fr{d}^{-1}\bslash\{0\}$ and all $1\leq i\leq u$. Thus, all $g_i$ are constant. Now, each $g_i$ is invariant under $H_i=\gamma_i^{-1}H\gamma_i\cap H$, which is a finite index subgroup of $\SL_2(\clo_F)$ since $H$ is. That is, each $g_i$ is a classical Hilbert modular form of weight $\mbf{k}$ on $H_i$. Since $\mbf{k}\in(2\inte^+)^r$ and $g_i$ is constant, $g_i$ must be zero. Hence, $g_i=0$ for all $1\leq i\leq u$, meaning that $f_\lambda|_\mbf{k}\gamma_i=f_\lambda$ for all $1\leq i\leq u$. Thus, $f_\lambda$ is invariant under $\SG_0(\fr{t}_\lambda\fr{n})$. 

To further extend the group to $\Gamma_0^+(\fr{t}_\lambda,\fr{n})$, we choose $\{\fr{t}_1,\ldots,\fr{t}_h\}$ such that each $\fr{t}_\lambda$ is a prime ideal (such representatives exist by Chebotarev's density theorem). Then, by \Cref{gamma0-generator}, $\Gamma_0^+(\fr{t}_\lambda,\fr{n})$ is generated by $\SG_0(\fr{t}_\lambda\fr{n})$, $T^\alpha$, and $E_\eps$ for all $\alpha\in\fr{t}_\lambda^{-1}$ and $\eps\in\clo_+^\times$. In particular, $f_\lambda$ is invariant under $\Gamma_0^+(\fr{t}_\lambda,\fr{n})$, so \Cref{gamma0-inv} follows.

In conclusion, under all the assumptions in \Cref{mainthm}, $\mbf{f}=(f_1,\ldots,f_h)$ is an adelic Hilbert modular form of weight $\mbf{k}$ and level $K_0(\fr{n})$. It then follows that $L(s)=L(s,\mbf{f})$ is the $L$-function of a Hilbert modular form of weight $\mbf{k}$ and level $K_0(\fr{n})$. It is also easy to see that throughout the proof of \Cref{gamma0-inv}, we only use the local Euler factors at primes $\fr{p}$ that are of degree~$1$, with $[\fr{p}]=[\clo_F]$ in $\Cl^+(F)$, and $\fr{p}\nmid\fr{n}\fr{d}^2$. Hence, the full Euler product in \Cref{mainthm} can be relaxed to a partial Euler product at those primes.
\\

Now, we introduce the technical lemma for showing that for fixed $\xi\in\fr{d}^{-1}\bslash\{0\}$, $\mu_i(\xi)=0$ for all $1\leq i\leq u$. This lemma is a slight generalization of \cite[Lemma~4.5]{conjhecke} and the proof also follows from the same idea.

\begin{lemma}
\label{nonzerodet}
Let $u\in\pos$. For each $1\leq j\leq u$, let $q_j$ be a rational prime, $n_j$ be an integer, and $m_j$ be a positive integer such that
\begin{enumerate}[label=\rm{(A\arabic*)}]
    \item all $q_j$ are distinct;
    \item $q_j\nmid m_jn_j$ for all $j$;
    \item $q_j\nmid m_i$ for all $i$ and $j$.
\end{enumerate}
Let $s_{i,j}\subseteq\{1,\ldots,q_j-1\}$ for each $1\leq i\leq u$ such that
\begin{enumerate}[label=\rm{(B\arabic*)}]
    \item $s_{i,j}\cap s_{i^\prime,j}=\emptyset$ for $i\neq i^\prime$;
    \item $s_{i,i}\neq\emptyset$ for all $i$.
\end{enumerate}
Write $S_{i,j}=\sum_{a\in s_{i,j}}e\big(\tfrac{n_ja}{m_jq_j}\big)$ for each $i,j$. Then, $\det((S_{i,j})_{i,j=1}^u)\neq 0$.
\end{lemma}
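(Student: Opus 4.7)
The plan is to derive a contradiction from the assumption that $\det((S_{i,j})) = 0$ by expanding via the Leibniz formula and isolating one coefficient using linear disjointness of cyclotomic fields, then applying (B1) and (B2) to show that coefficient is nonzero.

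First I would observe that since $\gcd(m_j, q_j) = 1$ (by (A2), or equivalently (A3) with $i = j$), the Chinese Remainder Theorem decomposes each $\zeta_j := e(1/(m_j q_j))$ as $\zeta_j = \omega_j^{\alpha_j} \eta_j^{\beta_j}$, where $\omega_j$ is a primitive $m_j$-th root of unity, $\eta_j$ a primitive $q_j$-th root of unity, and $\alpha_j q_j + \beta_j m_j = 1$. Thus each entry $S_{i,j}$ factors into an $\omega_j$-part and an $\eta_j$-part. I would then expand
\[
\det((S_{i,j})) \eq \sum_{\sigma \in S_u} \sgn(\sigma) \prod_j S_{\sigma^{-1}(j),\, j}
\]
and regroup the resulting double sum by the tuple $(b_j) \in \prod_j \{1, \ldots, q_j - 1\}$ of exponents, obtaining
\[
\det((S_{i,j})) \eq \sum_{(b_j)} \epsilon(b)\, \prod_j \omega_j^{\alpha_j n_j b_j}\, \prod_j \eta_j^{\beta_j n_j b_j},
\]
where $\epsilon(b) := \sum_\sigma \sgn(\sigma)$ ranges over $\sigma \in S_u$ satisfying $b_j \in s_{\sigma^{-1}(j),\, j}$ for every $j$. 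By the disjointness (B1), for each $(b_j)$ there is at most one such $\sigma$, so $\epsilon(b) \in \{-1, 0, +1\}$.

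Next, the crucial algebraic input is linear disjointness: by (A1) and (A3), $\gcd(\lcm(m_1, \ldots, m_u),\, q_1 \cdots q_u) = 1$, so the cyclotomic fields $\rat(\omega_1, \ldots, \omega_u)$ and $\rat(\eta_1, \ldots, \eta_u)$ are linearly disjoint over $\rat$. Combined with the fact that $\{\prod_j \eta_j^{c_j} : 1 \leq c_j \leq q_j - 1\}$ is a $\rat$-basis of $\rat(\eta_1, \ldots, \eta_u)$, and that (A2) makes $b_j \mapsto \beta_j n_j b_j \pmod{q_j}$ a bijection on $\{1, \ldots, q_j - 1\}$, the vectors $\prod_j \eta_j^{\beta_j n_j b_j}$ are linearly independent over $\rat(\omega_1, \ldots, \omega_u)$ as $(b_j)$ varies. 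If $\det((S_{i,j})) = 0$, equating coefficients forces $\epsilon(b) \prod_j \omega_j^{\alpha_j n_j b_j} = 0$, and hence $\epsilon(b) = 0$, for every tuple $(b_j)$.

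Finally, I would pick $a_j^\ast \in s_{j,j}$ for each $j$ (possible by (B2)) and take $b_j := a_j^\ast$. By (B1), $b_j$ lies in $s_{i,j}$ only for $i = j$, so the condition $b_j \in s_{\sigma^{-1}(j),\, j}$ for all $j$ forces $\sigma = \id$, whence $\epsilon(b) = +1 \neq 0$, contradicting the previous step. The main obstacle I anticipate is carrying out the CRT decomposition and linear-disjointness step cleanly enough to separate the $(b_j)$-coefficients one at a time; once this algebraic groundwork is in place, the combinatorial core supplied by (B1) and (B2) is essentially immediate.
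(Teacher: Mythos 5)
Your proof is correct, but it takes a genuinely different route from the paper's. The paper argues by induction on $u$: writing $\zeta_j = e(n_j/(m_j q_j))$ (after reducing to $\gcd(m_j,n_j)=1$), it views the determinant as a polynomial $P(\zeta_1)$ with coefficients in $\rat(\zeta_2,\dots,\zeta_u)$, uses (B1)--(B2) to show the coefficient of $X^a$ (for a suitable $a \in s_{1,1}$) is the $(u-1)\times(u-1)$ minor $\det((S_{i,j})_{i,j=2}^u)$, which is nonzero by induction, and then bounds degrees against $[\rat(\zeta_1,\dots,\zeta_u):\rat(\zeta_2,\dots,\zeta_u)]=\vphi(m)\vphi(q_1)/\vphi(m') \geq q_1-1$ to conclude $P(\zeta_1)\neq 0$. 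You avoid induction entirely: you expand the determinant via Leibniz, use CRT to split each entry into an $m_j$-cyclotomic and a $q_j$-cyclotomic part, and then invoke linear disjointness of $\rat(\omega_1,\dots,\omega_u)$ and $\rat(\eta_1,\dots,\eta_u)$ (which follows from (A1), (A3)) plus the fact that the monomials $\prod_j \eta_j^{c_j}$ with $1\leq c_j\leq q_j-1$ form a basis, to read off the coefficient $\epsilon(b)$ of each tuple $(b_j)$ directly. The reindexing $b_j\mapsto \beta_j n_j b_j \bmod q_j$ (a bijection since $q_j\nmid \beta_j n_j$ by (A2)) correctly matches tuples to basis monomials, and (B1)--(B2) then give the nonvanishing of $\epsilon(b)$ at the diagonal tuple $b_j=a_j^\ast\in s_{j,j}$. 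Your version front-loads the algebra once and makes the combinatorial core (the $\pm1$ sign) cleanly visible, at the cost of heavier bookkeeping with the CRT exponents $\alpha_j,\beta_j$; the paper's inductive version is shorter to write out because it only needs one degree estimate per step. Both are valid, and both ultimately rest on the same field-theoretic fact that adjoining a primitive $q_j$-th root of unity is an extension of full degree $q_j-1$ over the cyclotomic field generated by the remaining roots of unity.
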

\begin{proof}
Note that replacing $m_j$ and $n_j$ with $\frac{m_j}{(m_j,n_j)}$ and $\frac{n_j}{(m_j,n_j)}$ does not affect the result, so we may assume that $(m_j,n_j)=1$ for all $1\leq j\leq u$. We also write $\zeta_j=e\big(\tfrac{n_j}{m_jq_j}\big)$ so that $S_{i,j}=\sum_{a\in s_{i,j}}\zeta_j^a$. Note that each $\zeta_j$ is a primitive $(m_jq_j)$-th root of unity by the coprime assumptions. We will now prove the result by induction on $u$.

Suppose that $u=1$. It suffices to show that $S_{1,1}\neq 0$. Write $Q(X)=\sum_{a\in s_{1,1}}X^{a-1}$. Then, $S_{1,1}=\zeta_1\cdot Q(\zeta_1)$, and $Q$~is a nonzero polynomial in $\rat[X]$ of degree $\leq q_1-2$ as \mbox{$\emptyset\neq s_{1,1}\subseteq\{1,\ldots,q_1-1\}$}. Note that
\begin{align*}
    [\rat(\zeta_1):\rat]\eq\vphi(m_1q_1)\eq\vphi(m_1)\vphi(q_1)\;\geq\;\vphi(q_1)\eq q_1-1\;>\;q_1-2.
\end{align*}
Since $\deg Q\leq q_1-2$ and $Q$ is nonzero, this implies that $Q(\zeta_1)\neq 0$, and hence $S_{1,1}\neq 0$.

Now, suppose that $u\geq 2$. Note that $S_{i,1}=\sum_{a\in s_{i,1}}\zeta_1^a$ so $\det((S_{i,j})_{i,j=1}^u)$ can be expressed as $P(\zeta_1)$ for some $P\in\rat(\zeta_2,\ldots,\zeta_u)[X]$ with $\deg P\leq q_1-1$. By (B1) and (B2), there exists an element $a$ such that $a\in s_{1,1}$ and $a\notin s_{i,1}$ for all $i\neq 1$. The coefficient of the term $X^a$ in $P$ is then the determinant of the matrix $(S_{i,j})_{i,j=2}^u$, which is nonzero by the induction hypothesis. Hence, $P$ is nonzero, and we may write $P(X)=X\cdot Q(X)$ for some nonzero $Q\in\rat(\zeta_2,\ldots,\zeta_u)[X]$ with $\deg Q\leq q_1-2$ as $s_{i,1}\subseteq\{1,\ldots,q_1-1\}$.

Write $m=\lcm(m_1,\ldots,m_u)$ and \mbox{$m^\prime=\lcm(m_2,\ldots,m_u)$}. By the coprime assumptions, 
\begin{align*}
    [\rat(\zeta_1,\ldots,\zeta_u):\rat(\zeta_2,\ldots,\zeta_u)]&\eq\frac{\vphi(mq_1\cdots q_u)}{\vphi(m^\prime q_2\cdots q_u)} \\
    &\eq \frac{\vphi(m)}{\vphi(m^\prime)}\cdot\vphi(q_1)\\
    &\;\geq\;\vphi(q_1)\eq q_1-1\;>\;q_1-2
\end{align*}
where $\vphi(m)/\vphi(m^\prime)\geq 1$ since $m^\prime|m$. Since $\deg Q\leq q_1-2$ and $Q$ is nonzero, this implies that $Q(\zeta_1)\neq 0$, and hence $\det((S_{i,j})_{i,j=1}^u)=P(\zeta_1)=\zeta_1\cdot Q(\zeta_1)\neq 0$.
\end{proof}

To finish this section, we make a short discussion on the gap in the proof of \cite[Theorem~2.2]{conjhecke} and provide our own fix. The proof uses Dirichlet's theorem to obtain primes $q_i$ and integers $a_i$ such that $\gamma_{q_i,a_i}=\sm{q_i&a_i\\**&*}\in H g_i$ and then plugs these $\gamma_{q_i,a_i}$ into Equation (2.3) in their paper. However, these integers $a_i$ \textit{a priori} do not lie in $\{1,2,\ldots,q_i-1\}$ while Equation (2.3), as a corollary of Equation (2.1) in their paper, requires that $a_{il}\in\{1,2,\ldots,q_i-1\}$. It is thus not possible to simply plug in $\gamma_{q_i,a_i}$ into Equation (2.3).

The fix that we offer here is essentially what we do in the case of Hilbert modular forms. We first prove that Equation (2.1) still holds if we replace the summation set by any set of representatives $\cl{F}^\times$ of $(\inte/q\inte)^\times$. In their paper, this indeed follows from \cite[Lemma 4.1]{conjhecke}. Then, we choose primes $q_i$ and integers $a_i$ such that $\gamma_{q_i,a_i}\in H g_i$ as before, and for each $q_i$ we choose the set of representatives $\cl{F}_i=\{aa_i\mid 1\leq a\leq q_i-1\}$ of $(\inte/q_i\inte)^\times$. Now, we may apply Equation (2.1) with $\cl{F}^\times=\cl{F}_i$ and proceed as what we have done in this section. That is, if we write
\begin{align*}
    s_{i,j}\eq\{a\in\inte\mid 1\leq a\leq q_j-1,\; \gamma_{q_j,aa_j}\in H g_j\},
\end{align*}
then the same argument yields a new version of Equation (2.4) in their proof as the following
\begin{align*}
    \sum_{i}\bigg(\sum_{a\in s_{i,j}}e\bigg(\frac{na_ja}{mq_j}\bigg)\bigg)\lambda_i(n)\eq 0 \hspace{0.3cm} \text{for all }j.
\end{align*}
To obtain a similar result on the nonvanishing of the determinant, we prove a modified version of \cite[Lemma~4.5]{conjhecke}, i.e., \Cref{nonzerodet} in this section. We then apply \Cref{nonzerodet} to the data $(q_j,m_j=m,n_j=na_j,s_{i,j})_{i,j}$ to deduce the nonvanishing of the determinant, and hence obtain that \mbox{$\lambda_i(n)=0$} for all $i$.

\appendix

\section{Lemmas on prime ideals}
\label{degree1}

In this appendix, we will prove all the necessary lemmas on prime ideals. As these lemmas are general enough, we will simply work with general number fields, and as before, we let $K$ denote a general number field to distinguish it from $F$. 

We start by first recalling some definitions. For a set $S$ of primes of a number field $K$, define the \textit{natural density} of $S$ as
\begin{align*}
    \lim_{X\rightarrow\infty}\frac{\hash\{\fr{p}\text{ prime of }K\mid\N(\fr{p})\leq X,\;\fr{p}\in S\}}{\hash\{\fr{p}\text{ prime of }K\mid\N(\fr{p})\leq X\}}
\end{align*}
if it exists. A prime $\fr{p}$ of $K$ is called \textit{of degree $1$} if it lies over a rational prime $q$ which splits completely in $K$.

\begin{lemma}
\label{deg1}
The set of primes of degree $1$ in a number field $K$ has density $1$.
\end{lemma}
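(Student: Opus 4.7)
The plan is to show that primes of degree $\geq 2$ are sparse compared to the full count of primes of $K$, so that the natural density of degree $1$ primes equals $1$.

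Let $n=[K:\rat]$. First I would observe that if $\fr{p}$ is a prime of $K$ of degree $f(\fr{p})\geq 2$ lying over the rational prime $q$, then $\N(\fr{p})=q^{f(\fr{p})}$, so the condition $\N(\fr{p})\leq X$ forces $q\leq X^{1/2}$. Since there are at most $n$ primes of $K$ above each rational prime, the number of primes $\fr{p}$ of $K$ with $f(\fr{p})\geq 2$ and $\N(\fr{p})\leq X$ is bounded by
\begin{align*}
    n\cdot\pi(X^{1/2})\eq O\!\bigg(\frac{X^{1/2}}{\log X}\bigg),
\end{align*}
where $\pi(\cdot)$ is the usual rational prime-counting function and we invoke the prime number theorem (or just Chebyshev's bound).

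Next I would invoke Landau's prime ideal theorem, which asserts
\begin{align*}
    \pi_K(X)\deq\hash\{\fr{p}\text{ prime of }K\mid \N(\fr{p})\leq X\}\;\sim\;\frac{X}{\log X}
\end{align*}
as $X\to\infty$. Combining the two displays, the number $\pi_K^{\geq 2}(X)$ of primes of $K$ of degree $\geq 2$ with $\N(\fr{p})\leq X$ satisfies $\pi_K^{\geq 2}(X)/\pi_K(X)=O(X^{-1/2})\to 0$. Consequently, the set of primes of $K$ of degree $1$ has natural density
\begin{align*}
    \lim_{X\to\infty}\frac{\pi_K(X)-\pi_K^{\geq 2}(X)}{\pi_K(X)}\eq 1.
\end{align*}

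The main (but standard) input is Landau's prime ideal theorem; if one wished to avoid it, one could instead use Chebotarev's density theorem applied to the Galois closure of $K/\rat$ to obtain a positive Dirichlet density of completely split rational primes and deduce the statement in terms of Dirichlet density, but the natural density version as stated really does require the prime ideal theorem. No new obstacle arises beyond citing this classical result.
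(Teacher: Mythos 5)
Your argument is essentially the same as the paper's (which in turn follows Ziegler): bound the degree-$\geq 2$ primes by $O(\sqrt X/\log X)$ via the observation that $\N(\fr p)=q^{f(\fr p)}\leq X$ forces $q\leq\sqrt X$, and compare against $\pi_K(X)\sim X/\log X$. You are more explicit in naming Landau's prime ideal theorem as the input, which the paper's terse ``so the result follows'' silently relies on. One small omission: the paper's definition of a degree-$1$ prime is $e(\fr p)f(\fr p)=1$, so you must also exclude the ramified primes with $f(\fr p)=1$ but $e(\fr p)\geq 2$; for these $\N(\fr p)=q$ can be as large as $X$, so your $q\leq\sqrt X$ bound does not apply. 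The paper handles this in one line by noting there are only $O(1)$ such primes (only finitely many rational primes ramify in $K$), and you should add the same remark. With that, the proof is complete and matches the paper.
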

\begin{proof}
See e.g. \cite[Lemma~1]{ziegler}.
\end{proof}

\begin{lemma}
\label{local-quotient}
Let $K$ be a number field and $\fr{p}$ be a prime of $K$. Let $S\subseteq\clo_K-\fr{p}$ be a multiplicatively closed set. Then, the natural map $\clo_K/\fr{p}\rightarrow S^{-1}\clo_K/S^{-1}\fr{p}$ is an isomorphism.
\end{lemma}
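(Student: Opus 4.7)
The plan is to invoke the standard fact that localization commutes with quotients, together with the fact that every nonzero prime in a Dedekind domain is maximal. Concretely, I would consider the composite ring homomorphism $\clo_K\rightarrow S^{-1}\clo_K\rightarrow S^{-1}\clo_K/S^{-1}\fr{p}$. Since $\fr{p}$ clearly maps to zero under this composite, it factors through a ring homomorphism $\phi:\clo_K/\fr{p}\rightarrow S^{-1}\clo_K/S^{-1}\fr{p}$. The goal is to show $\phi$ is bijective.

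For injectivity, I would start with $a\in\clo_K$ whose image in $S^{-1}\clo_K/S^{-1}\fr{p}$ is zero, so that $a/1=p/s$ in $S^{-1}\clo_K$ for some $p\in\fr{p}$ and $s\in S$. Clearing denominators in the localization gives some $t\in S$ with $t(sa-p)=0$ in $\clo_K$, hence $tsa=tp\in\fr{p}$. Since $ts\notin\fr{p}$ (as $S$ is disjoint from $\fr{p}$) and $\fr{p}$ is prime, we conclude $a\in\fr{p}$, which is exactly what is needed.

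For surjectivity, I would take an arbitrary element $a/s\in S^{-1}\clo_K$ with $s\in S$. Here I use that in a number field, every nonzero prime ideal is maximal, so $\clo_K/\fr{p}$ is a field (if $\fr{p}=0$, then $S^{-1}\fr{p}=0$ and the statement is the trivial observation that $\clo_K$ injects into its field of fractions, but one should note that then the map is not surjective unless $S$ consists of units; so one implicitly assumes $\fr{p}\neq 0$, which is the only case used later). Since $s\notin\fr{p}$, its class $\bar s$ is a nonzero element of the field $\clo_K/\fr{p}$ and hence invertible, so there exists $b\in\clo_K$ with $sb\equiv a\pmod{\fr{p}}$. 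Then $a/s-b/1=(a-sb)/s\in S^{-1}\fr{p}$, so $\phi(\bar b)=\overline{a/s}$.

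There is no real obstacle here; the only point to watch is the implicit assumption that $\fr{p}$ is a nonzero (hence maximal) prime, which is the case in every application of the lemma in the main body. Both steps are otherwise routine manipulations of fractions in a localization.
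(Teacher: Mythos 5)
Your proof is correct. The paper takes a slightly more abstract route: it cites the general fact that localization commutes with quotients (Stacks Project 00CT), giving $\ov{S}^{-1}(\clo_K/\fr{p})\simarrow S^{-1}\clo_K/S^{-1}\fr{p}$ where $\ov S$ is the image of $S$ in $\clo_K/\fr{p}$, and then observes that $\ov S\subseteq(\clo_K/\fr{p})^\times$ so that the left-hand localization is trivial. You instead verify injectivity and surjectivity by hand, which is more self-contained but amounts to the same underlying facts: your injectivity step is the integral-domain/prime computation that underlies the commutation isomorphism, and your surjectivity step is precisely the observation that $\bar s$ is a unit in $\clo_K/\fr{p}$. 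Both arguments quietly rely on $\fr{p}$ being maximal (so that $\clo_K/\fr{p}$ is a field); you make this explicit, while the paper leaves it implicit in the claim $\ov S\subseteq(\clo_K/\fr{p})^\times$. Your aside about $\fr{p}=0$ is unnecessary — in this context ``a prime of $K$'' always means a nonzero prime ideal — but it does no harm.
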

\begin{proof}
Note that localization commutes with quotient (see \cite[\href{https://stacks.math.columbia.edu/tag/00CT}{Proposition 10.9.14 (00CT)}]{stacks-project}). That is, if we let $\ov{S}$ denote the image of $S$ under the quotient map $\clo_K\twoheadrightarrow\clo_K/\fr{p}$, then $\ov{S}^{-1}(\clo_K/\fr{p})\simarrow S^{-1}\clo_K/S^{-1}\fr{p}$. Note that $\ov{S}\subseteq(\clo_K/\fr{p})^\times$ since $S\subseteq\clo_K-\fr{p}$, so $\ov{S}^{-1}(\clo_K/\fr{p})=\clo_K/\fr{p}$.
\end{proof}

\begin{corol}
\label{ideal-local-quotient}
Let $K$ be a number field, $\fr{t}$ be a nonzero ideal of $K$, and $\fr{p}$ be a nonzero prime ideal of $K$ such that $\fr{p}\nmid(\N(\fr{t}))$. Then, the natural map $\clo_K/\fr{p}\rightarrow\fr{t}^{-1}/\fr{t}^{-1}\fr{p}$ is bijective.
\end{corol}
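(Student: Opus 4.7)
The natural map in question arises from the inclusion $\clo_K \subseteq \fr{t}^{-1}$, which holds because $\fr{t}$ is an integral ideal. I would prove the bijectivity directly by a short argument, rather than invoking the localization machinery of \Cref{local-quotient}, although the latter is also possible.

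The key preliminary observation is that the hypothesis $\fr{p} \nmid (\N(\fr{t}))$ actually implies $\fr{p} \nmid \fr{t}$: since $\N(\fr{t}) \in \fr{t}$, any prime ideal containing $\fr{t}$ necessarily contains $\N(\fr{t})$. Consequently $\fr{p}$ and $\fr{t}$ are coprime, so $\fr{p} + \fr{t} = \clo_K$. Multiplying this equality by $\fr{t}^{-1}$ gives $\fr{t}^{-1}\fr{p} + \clo_K = \fr{t}^{-1}$, which is precisely the statement that $\clo_K$ surjects onto $\fr{t}^{-1}/\fr{t}^{-1}\fr{p}$. This handles surjectivity.

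For injectivity, I would show $\clo_K \cap \fr{t}^{-1}\fr{p} = \fr{p}$. Suppose $y \in \clo_K$ lies in $\fr{t}^{-1}\fr{p}$; then $y\fr{t} \subseteq \fr{p}$ as ideals of $\clo_K$. If $y \notin \fr{p}$, then since $\fr{p}$ is prime, we must have $\fr{t} \subseteq \fr{p}$, i.e.\ $\fr{p} \mid \fr{t}$, contradicting the above. Hence $y \in \fr{p}$, completing the injectivity argument and the proof.

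There is no real obstacle here; the only subtlety is the passage from the hypothesis $\fr{p}\nmid(\N(\fr{t}))$ to the coprimality $\fr{p}+\fr{t}=\clo_K$, and once that is in hand, both directions follow from multiplying or intersecting ideals. If one prefers to use \Cref{local-quotient}, the same result follows by taking $S=\{\N(\fr{t})^n\}_{n\geq 0}$, verifying $S^{-1}\fr{t}^{-1}=S^{-1}\clo_K$ (the nontrivial inclusion coming from $\N(\fr{t})\cdot\fr{t}^{-1}\subseteq\clo_K$), and then noting that localizing the $\clo_K/\fr{p}$-module $\fr{t}^{-1}/\fr{t}^{-1}\fr{p}$ at the image of $S$ in the residue field is trivial.
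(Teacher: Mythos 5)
Your proof is correct, and it takes a genuinely different route from the paper's. The paper deduces the corollary from \Cref{local-quotient} by localizing at $S=\{\N(\fr{t})^n\}$: the inclusion $\fr{t}^{-1}\subseteq S^{-1}\clo_K$ gives a factorization $\clo_K/\fr{p}\rightarrow\fr{t}^{-1}/\fr{t}^{-1}\fr{p}\rightarrow S^{-1}\clo_K/S^{-1}\fr{p}$ whose composite is an isomorphism, forcing the first arrow to be bijective. Your argument instead works purely at the level of ideal arithmetic: $\fr{p}\nmid\fr{t}$ gives $\fr{p}+\fr{t}=\clo_K$, multiplication by $\fr{t}^{-1}$ gives surjectivity, and primality of $\fr{p}$ gives $\clo_K\cap\fr{t}^{-1}\fr{p}=\fr{p}$, hence injectivity. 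Both are valid; yours is more self-contained (it does not rely on \Cref{local-quotient} at all), and it also clarifies that the hypothesis actually used is the weaker condition $\fr{p}\nmid\fr{t}$ rather than $\fr{p}\nmid(\N(\fr{t}))$. The paper's localization argument is arguably closer in spirit to how \Cref{npnmid} and \Cref{xi-represent} will later consume this lemma via $\N(p)$ and $\N(\fr{d})$, but that is a matter of exposition rather than substance.
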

\begin{proof}
Write $T=\N(\fr{t})$ and let $S=\{T^n|n\in\inte_{\geq0}\}$. As $\fr{p}\nmid (T)$, $S$ is a multiplicatively closed subset of $\clo_K-\fr{p}$, so by \Cref{local-quotient}, $\clo_K/\fr{p}\simarrow S^{-1}\clo_K/S^{-1}\fr{p}$. Also, note that $T\in\fr{t}$, so $T\xi\in\clo_K$ for all $\xi\in\fr{t}^{-1}$, and hence $\fr{t}^{-1}\subseteq S^{-1}\clo_K$. In particular, we obtain a sequence of maps
\begin{align*}
    \clo_K/\fr{p}\rightarrow \fr{t}^{-1}/\fr{t}^{-1}\fr{p}\rightarrow S^{-1}\clo_K/S^{-1}\fr{p}. 
\end{align*}
Since $\clo_K/\fr{p}\simarrow S^{-1}\clo_K/S^{-1}\fr{p}$, this forces the map $\clo_K/\fr{p}\rightarrow\fr{t}^{-1}/\fr{t}^{-1}\fr{p}$ to be bijective.
\end{proof}

Recall that an element $\alpha\in K$ is called \textit{totally positive} if $\sigma(\alpha)>0$ for all embeddings $\sigma:K\hookrightarrow\real$. In particular, if $\alpha\in K$ is totally positive, then $\N(\alpha)>0$. In the following three lemmas, $p\in\clo_K$ is a fixed totally positive element such that $(p)$ is a prime of degree $1$.

\begin{lemma}
\label{standardfp}
Let $\alpha\in\clo_K$ with $p\nmid\alpha$. Then, the set of representatives of~$\clo_K/(p)$ can be chosen as $\cl{F}_p^\alpha=\{a\alpha\mid a\in\inte, 0\leq a\leq \N(p)-1\}$.
\end{lemma}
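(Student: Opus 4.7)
The plan is straightforward once we unpack what it means for $(p)$ to be a prime of degree $1$. First I would record that $(p)$ lying over a rational prime $q$ with $e((p))f((p))=1$ implies two things: $\N((p))=q$, so the residue field $\clo_K/(p)$ has exactly $q$ elements, and $(p)\cap\inte = q\inte$ (the contraction of a prime of residue degree one).

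Since the candidate set $\cl{F}_p^\alpha$ already has $\N(p)=q$ elements, it suffices to verify that its elements are pairwise distinct modulo $(p)$. So I would suppose $a\alpha\equiv a'\alpha\pmod{(p)}$ with $0\leq a,a'\leq q-1$, rewrite this as $(a-a')\alpha\in(p)$, and use primality of $(p)$ together with the hypothesis $p\nmid\alpha$ to conclude $a-a'\in(p)\cap\inte=q\inte$. Since $|a-a'|<q$, this forces $a=a'$.

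There is really no obstacle here; the only subtlety is to remember to invoke $(p)\cap\inte=q\inte$, which is where the degree $1$ hypothesis on $(p)$ gets used (without it, the contraction could be $q\inte$ but $\N((p))$ could be $q^f$ for $f>1$, breaking the counting argument). The total positivity of $p$ plays no role in this particular statement; it is only used later when $p$ is treated as a generator for the Hecke-operator construction.
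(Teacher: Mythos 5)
Your proof is correct and follows essentially the same structure as the paper's: show the $\N(p)$ elements of $\cl{F}_p^\alpha$ are pairwise distinct modulo $(p)$ and then count. The only cosmetic difference is the final step — you pass through $(p)\cap\inte=q\inte$, whereas the paper takes norms ($\N(p)\mid\N(a-b)$, so $q\mid(a-b)^r$, so $q\mid(a-b)$) — but both hinge on the same input, namely that $(p)$ is prime with $\N(p)=q$ a rational prime.
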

\begin{proof}
Write $q=\N(p)$ a rational prime. For any $a,b\in \cl{F}_p^1$, $p|(a\alpha-b\alpha)$ implies $p|(a-b)$ and hence $\N(p)|\N(a-b)$, so $q|(a-b)^r$. As both $q$ and $a-b$ are integers and $q$ is a rational prime, this implies that $q|(a-b)$, and hence $a=b$ by the choice. It follows that $\cl{F}_p^\alpha$ is a set of representatives of $\clo_K/(p)$ as $|\cl{F}_p^\alpha|=\N(p)$.
\end{proof}

\begin{lemma}
\label{xi-represent}
Let $\fr{t}$ be a nonzero ideal of $\clo_K$ such that $p\nmid\N(\fr{t})$. Then, every $\xi\in\fr{t}^{-1}$ can be written as $\xi=n+p\eta$ for some $n\in\cl{F}_p^1=\{0,1,\ldots,\N(p)-1\}$ and $\eta\in\fr{t}^{-1}$.
\end{lemma}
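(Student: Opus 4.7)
The plan is to combine the bijection from Corollary \ref{ideal-local-quotient} with the choice of representatives in Lemma \ref{standardfp} to essentially reduce the decomposition statement to a statement about $\clo_K/(p)$.

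First I would apply Corollary \ref{ideal-local-quotient} with $\fr{p} = (p)$: since $p \nmid \N(\fr{t})$, the prime ideal $(p)$ does not divide $(\N(\fr{t}))$, so we obtain a bijection
\begin{align*}
    \clo_K/(p) \;\simarrow\; \fr{t}^{-1}/\fr{t}^{-1}(p).
\end{align*}
Next, I would use Lemma \ref{standardfp} with $\alpha = 1$ (which satisfies $p \nmid 1$) to identify $\cl{F}_p^1 = \{0,1,\ldots,\N(p)-1\}$ as a complete set of representatives of $\clo_K/(p)$.

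Given $\xi \in \fr{t}^{-1}$, the class of $\xi$ in $\fr{t}^{-1}/\fr{t}^{-1}(p)$ corresponds via the bijection above to a unique class in $\clo_K/(p)$, which in turn admits a unique representative $n \in \{0,1,\ldots,\N(p)-1\}$. By construction, $\xi$ and $n$ have the same image in $\fr{t}^{-1}/\fr{t}^{-1}(p)$, so $\xi - n \in \fr{t}^{-1}(p)$. Since $(p)$ is principal, $\fr{t}^{-1}(p) = p\fr{t}^{-1}$, so we may write $\xi - n = p\eta$ for some $\eta \in \fr{t}^{-1}$, giving the desired decomposition $\xi = n + p\eta$.

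There is no real obstacle here; the only subtlety is the identification $\fr{t}^{-1}(p) = p \cdot \fr{t}^{-1}$ as sets, which holds because $(p)$ is principal (every element of the product ideal is of the form $p\eta$ with $\eta\in\fr{t}^{-1}$, since $p$ can be factored out of any finite sum $\sum_i p a_i \eta_i = p \sum_i a_i \eta_i$ with $a_i \in \clo_K$ and $\eta_i \in \fr{t}^{-1}$). Everything else is a direct concatenation of the two preceding results.
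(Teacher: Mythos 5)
Your proof is correct and takes essentially the same route as the paper: combine Corollary \ref{ideal-local-quotient} with Lemma \ref{standardfp} to transfer the representatives $\cl{F}_p^1$ from $\clo_K/(p)$ to $\fr{t}^{-1}/\fr{t}^{-1}(p)$. The paper leaves the identification $\fr{t}^{-1}(p)=p\fr{t}^{-1}$ implicit, whereas you spell it out; that is the only difference.
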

\begin{proof}
Since $p\nmid\N(\fr{t})$, by \Cref{ideal-local-quotient}, the natural map $\clo_K/(p)\rightarrow\fr{t}^{-1}/\fr{t}^{-1}(p)$ is bijective. By \Cref{standardfp}, $\cl{F}_p^1$ is a set of representatives of $\clo_K/(p)$, so it is also a set of representatives of $\fr{t}^{-1}/\fr{t}^{-1}(p)$. That is, for any $\xi\in\fr{t}^{-1}$, there exist $n\in\cl{F}_p^1$ and $\eta\in\fr{t}^{-1}$ such that $\xi=n+p\eta$, which proves the result.
\end{proof}

\begin{lemma}
\label{npnmid}
Suppose that $p\nmid\N(\fr{d})$. Then, for any $\xi\in\fr{d}^{-1}$, $(p)|\thin\xi\fr{d}$ if and only if $\N(p)\thin|\thin\tr\big(\frac{\xi\N(p)}{p}\big)$.
\end{lemma}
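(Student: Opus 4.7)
Set $q = \N(p)$, which is a rational prime since $(p)$ is a prime of degree~$1$. I begin with two preliminary observations. First, $(p) \mid \xi\fr{d}$ is equivalent to $\xi \in p\fr{d}^{-1}$, because $\xi\fr{d} \subseteq p\clo_K$ iff $(\xi/p)\fr{d} \subseteq \clo_K$ iff $\xi/p \in \fr{d}^{-1}$. Second, $q/p \in \clo_K$ (since $q \in (p)$, as $(p)$ has residue degree~$1$), so $\xi q/p \in \fr{d}^{-1}$ whenever $\xi \in \fr{d}^{-1}$, and $\tr(\xi q/p) \in \inte$ by the defining property of the inverse different.

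The hypothesis $p \nmid \N(\fr{d})$ lets me apply \Cref{xi-represent} to $\fr{t} = \fr{d}$: every $\xi \in \fr{d}^{-1}$ decomposes as $\xi = n + p\eta$ with $n \in \{0, 1, \ldots, q-1\}$ and $\eta \in \fr{d}^{-1}$, and the proof of that lemma shows that $\{0, 1, \ldots, q-1\}$ is a set of representatives of $\fr{d}^{-1}/p\fr{d}^{-1}$, so $n$ is uniquely determined. By the first observation, $(p) \mid \xi\fr{d}$ iff $n = 0$. A direct computation gives
\begin{align*}
    \tr(\xi q/p) \eq n \tr(q/p) + q \tr(\eta) \;\equiv\; n \tr(q/p) \pmod{q},
\end{align*}
since $\tr(\eta) \in \inte$. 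Because $q$ is prime and $0 \leq n < q$, $q \mid n\tr(q/p)$ iff $n = 0$ or $q \mid \tr(q/p)$. The lemma therefore reduces to the single assertion $q \nmid \tr(q/p)$.

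To establish this, I factor $(q) = (p) \prod_{\fr{q}} \fr{q}^{e(\fr{q})}$ in $\clo_K$, where $\fr{q}$ runs over primes over $q$ distinct from $(p)$ and I use $e((p)) = 1$. Then $(q/p) = \prod_{\fr{q}} \fr{q}^{e(\fr{q})}$, so $q/p$ is a unit at $(p)$ and lies in $\fr{q}^{e(\fr{q})}$ for every other prime $\fr{q}$ over $q$. By the Chinese Remainder Theorem,
\begin{align*}
    \clo_K/(q) \simarrow \clo_K/(p) \times \prod_{\fr{q}} \clo_K/\fr{q}^{e(\fr{q})},
\end{align*}
under which $q/p$ corresponds to $(\overline{q/p}, 0, \ldots, 0)$, with $\overline{q/p} \in \clo_K/(p) \cong \inte/q\inte$ nonzero.

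Finally, since $\clo_K$ is free of finite rank over $\inte$, the reduction of $\tr_{K/\rat}$ modulo $q$ agrees with the $(\inte/q\inte)$-algebra trace on $\clo_K/(q)$, which under the CRT isomorphism decomposes as the sum of the component traces. The $(p)$-component $\clo_K/(p) \cong \inte/q\inte$ is $1$-dimensional over $\inte/q\inte$, so its trace is the identity and contributes $\overline{q/p}$; every other component contributes $0$ because $q/p$ vanishes there. Hence $\tr(q/p) \equiv \overline{q/p} \not\equiv 0 \pmod{q}$, completing the proof. The main obstacle is this final trace computation: one must identify the mod-$q$ reduction of the trace with the algebra trace on $\clo_K/(q)$, apply CRT to localize to the single prime $(p)$, and then exploit the $e = f = 1$ hypothesis on $(p)$ to conclude.
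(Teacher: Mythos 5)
Your proof is correct and takes a genuinely different route from the paper's. Both arguments pass through the decomposition $\xi = n + p\eta$ from \Cref{xi-represent} (which is where the hypothesis $p \nmid \N(\fr{d})$ enters) and reduce the question to the residue $n \in \{0,\ldots,q-1\}$. The divergence is in the key step. The paper argues by contradiction: assuming $(p) \nmid \xi\fr{d}$ but $\N(p) \mid \tr(\xi\N(p)/p)$, it starts from $n\tr(1/p) \in \inte$ with $\gcd(n, \N(p)) = 1$, bootstraps via $\tr(\N(p)/p) \in \inte$ and a B\'ezout identity to $\tr(\alpha/p) \in \inte$ for all $\alpha \in \clo_K$, hence $1/p \in \fr{d}^{-1}$, and then derives a contradiction by taking norms of $\N(\fr{d})/p \in \clo_K$. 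You instead isolate and prove directly the sharp arithmetic fact $q \nmid \tr(q/p)$, via compatibility of $\tr_{K/\rat}$ with reduction modulo $q$ (since $\clo_K$ is $\inte$-free) and the CRT decomposition of $\clo_K/(q)$, in which $q/p$ vanishes in every component except $\clo_K/(p) \cong \inte/q\inte$, where it is a nonzero scalar whose trace is itself. Your route is shorter and more conceptual: it makes visible that $q \nmid \tr(q/p)$ depends only on $(p)$ being a degree-$1$ prime and is independent of $\fr{d}$ (indeed degree-$1$ primes are unramified, so $p \nmid \N(\fr{d})$ is automatic, a fact the paper's formulation obscures). The paper's argument has the virtue of being wholly elementary, avoiding CRT and trace base-change, but the bootstrap to $1/p \in \fr{d}^{-1}$ is less transparent about where the content actually lies.
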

\begin{proof}
First, $\tr\big(\frac{\xi\N(p)}{p}\big)\in\inte$ as $p\thin|\N(p)$. If $(p)|\thin\xi\fr{d}$, then $\xi/p\in\fr{d}^{-1}$, so $\tr\big(\frac{\xi\N(p)}{p}\big)=\N(p)\cdot\tr(\xi/p)$ is divisible by $\N(p)$. Now, suppose that $\N(p)\thin|\thin\tr\big(\frac{\xi\N(p)}{p}\big)$ so that $\tr\big(\frac{\xi}{p}\big)\in\inte$ and suppose also that $(p)\nmid\xi\fr{d}$.  By \Cref{xi-represent}, there exist $n\in\cl{F}_p^1$ and $\eta\in\fr{d}^{-1}$ such that $\xi=n+p\eta$. Then,
\begin{align*}
    \tr\bigg(\frac{\xi}{p}\bigg)\eq\tr\bigg(\frac{n+p\eta}{p}\bigg)\eq n\cdot\tr\bigg(\frac{1}{p}\bigg)+\tr(\eta).
\end{align*}
Here $\tr\big(\frac{\xi}{p}\big)\in\inte$ by the assumption and $\tr(\eta)\in\inte$ since $\eta\in\fr{d}^{-1}$. Hence, $n\cdot\tr(1/p)\in\inte$, where $n\neq 0$ since $(p)\nmid\xi\fr{d}$.

We will now prove that $\tr(\alpha/p)\in\inte$ for all $\alpha\in\clo_K$. First, for any $m\in\inte$, since $(n,\N(p))=1$, there exist $a,b\in\inte$ such that $m=an+b\thin\N(p)$. Hence,
\begin{align*}
    \tr\bigg(\frac{m}{p}\bigg)\eq\tr\bigg(\frac{an+b\thin\N(p)}{p}\bigg)\eq a\cdot n\cdot\tr\bigg(\frac{1}{p}\bigg)+b\cdot\tr\bigg(\frac{\N(p)}{p}\bigg)\in\inte.
\end{align*}
In particular, $\tr(m/p)\in\inte$ for all $m\in\cl{F}_p^1$. Now, as $\cl{F}_p^1$ is a set of representatives of $\clo_K/(p)$ by \Cref{standardfp}, each $\alpha\in\clo_K$ can be written as $\alpha=m+p\beta$ for some $m\in\cl{F}_p^1$ and $\beta\in\clo_K$. Hence,
\begin{align*}
    \tr\bigg(\frac{\alpha}{p}\bigg)\eq\tr\bigg(\frac{m+p\beta}{p}\bigg)\eq \tr\bigg(\frac{m}{p}\bigg)+\tr(\beta)\in\inte.
\end{align*}
Therefore, $\tr(\alpha/p)\in\inte$ for all $\alpha\in\clo_K$, so $1/p\in\fr{d}^{-1}$. In particular, $\frac{\N(\fr{d})}{p}\in\clo_K$ since $\N(\fr{d})\in\fr{d}$. This implies that $\N(p)|\N(\fr{d})^r$, and hence $p\thin|\N(\fr{d})^r$ since $p\thin|\N(p)$, which leads to contradiction as $p\nmid\N(\fr{d})$ by the assumption.
\end{proof}

The last two lemmas deal with Dirichlet's theorem on arithmetic progressions for general number fields. The next lemma is a fact on ray class groups that we will need for the proof of the last lemma.

\begin{lemma}
\label{rayclass}
Let $\fr{m}=\fr{m}_f\fr{m}_\infty$ be a modulus of $K$ where $\fr{m}_f$ is the non-archimedean part and $\fr{m}_\infty$ is the archimedean part. Let $C_\fr{m}$ be the ray class group for the modulus $\fr{m}$. Then, two nonzero integral ideals $\fr{a}$ and $\fr{b}$ correspond to the same class in $C_\fr{m}$ if and only if there exist nonzero $\alpha,\beta\in\clo_K$ such that
\begin{enumerate}[label=\rm{(\roman*)}]
    \item $\alpha\fr{a}=\beta\fr{b}$;
    \item $\alpha\equiv\beta\equiv1\pmod{\fr{m}_f}$;
    \item $\alpha$ and $\beta$ have the same sign for every real prime dividing $\fr{m}_\infty$.
\end{enumerate}
\end{lemma}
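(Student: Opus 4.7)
The plan is to unpack the standard definition $C_\fr{m} = I^\fr{m}/P^\fr{m}$, where $I^\fr{m}$ is the group of fractional ideals of $K$ coprime to $\fr{m}_f$ and $P^\fr{m}$ is generated by the principal ideals $(\gamma)$ with $\gamma\in K^\times$ satisfying the multiplicative congruence $\gamma\equiv 1\pmod{\fr{m}_f}$ (i.e.\ $v_\fr{p}(\gamma-1)\ge v_\fr{p}(\fr{m}_f)$ for each $\fr{p}\mid\fr{m}_f$) together with $\sigma(\gamma)>0$ for every real embedding $\sigma\mid\fr{m}_\infty$. Both sides of the claimed equivalence already force $\fr{a}$ and $\fr{b}$ to have the same $\fr{m}_f$-part, so we may assume both are coprime to $\fr{m}_f$. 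The problem then reduces to a dictionary between elements $\gamma\in K^\times$ with $\gamma\fr{a}=\fr{b}$ that satisfy the above two properties, and pairs $(\alpha,\beta)\in\clo_K\times\clo_K$ as in (i)-(iii).

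For $(\Leftarrow)$, I would set $\gamma:=\alpha/\beta$. Condition (i) rearranges to $\gamma\fr{a}=\fr{b}$; condition (ii) forces both $\alpha$ and $\beta$ to be local units at each $\fr{p}\mid\fr{m}_f$, so $\gamma-1=(\alpha-\beta)/\beta$ lies in $\fr{m}_f$ locally (since $\alpha-\beta=(\alpha-1)-(\beta-1)\in\fr{m}_f$), yielding $\gamma\equiv 1\pmod{\fr{m}_f}$; and (iii) gives $\sigma(\gamma)=\sigma(\alpha)/\sigma(\beta)>0$ at every real $\sigma\mid\fr{m}_\infty$. Hence $(\gamma)\in P^\fr{m}$ and $\fr{a},\fr{b}$ define the same class in $C_\fr{m}$.

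For $(\Rightarrow)$, given such a $\gamma$ I would observe that the multiplicative congruence $\gamma\equiv 1\pmod{\fr{m}_f}$ forces $v_\fr{p}(\gamma)=0$ at each $\fr{p}\mid\fr{m}_f$, so the integral denominator ideal $\fr{c}:=\{x\in\clo_K : x\gamma\in\clo_K\}$ is coprime to $\fr{m}_f$. By the Chinese Remainder Theorem applied to $\fr{c}$ and $\fr{m}_f$, I can choose a nonzero $\beta\in\clo_K$ with $\beta\in\fr{c}$ and $\beta\equiv 1\pmod{\fr{m}_f}$, and set $\alpha:=\beta\gamma\in\clo_K$. Then (i) is immediate from $\alpha\fr{a}=\beta\gamma\fr{a}=\beta\fr{b}$; (ii) follows from $\beta\equiv 1$ by construction together with $\alpha=\beta\gamma\equiv 1\cdot 1=1\pmod{\fr{m}_f}$ checked $\fr{p}$-locally at each $\fr{p}\mid\fr{m}_f$; and (iii) follows from $\sigma(\alpha)=\sigma(\beta)\sigma(\gamma)$ together with $\sigma(\gamma)>0$. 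The only step requiring care is this CRT application, and its key input is exactly the observation that $\gamma\equiv 1\pmod{\fr{m}_f}$ makes $\gamma$ a local unit at each $\fr{p}\mid\fr{m}_f$, so that its denominator ideal automatically avoids $\fr{m}_f$; everything else is a direct bookkeeping exercise.
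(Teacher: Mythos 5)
Your proof is correct. The paper itself offers no argument here: it simply cites Milne, Chapter~V, Proposition~1.6, which is essentially this statement. You instead give a complete, self-contained derivation from the definition $C_\fr{m}=I^\fr{m}/P^\fr{m}$, which is a legitimate alternative and arguably more useful to the reader. Your two directions are both sound: for $(\Leftarrow)$ the key point, which you handle correctly, is that (ii) makes $\alpha,\beta$ local units at each $\fr{p}\mid\fr{m}_f$, so $v_\fr{p}(\alpha/\beta-1)=v_\fr{p}(\alpha-\beta)\ge v_\fr{p}(\fr{m}_f)$; for $(\Rightarrow)$ the CRT step works because the multiplicative congruence $\gamma\equiv1\pmod{\fr{m}_f}$ forces $v_\fr{p}(\gamma)=0$ at $\fr{p}\mid\fr{m}_f$, so the denominator ideal of $\gamma$ is automatically coprime to $\fr{m}_f$, and then $\beta\in\fr{c}$ with $\beta\equiv1\pmod{\fr{m}_f}$ produces the integral pair $(\alpha,\beta)=(\beta\gamma,\beta)$ with the required properties. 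One small point worth stating explicitly rather than glossing: the lemma implicitly assumes $\fr{a},\fr{b}$ are coprime to $\fr{m}_f$ (otherwise they have no class in $C_\fr{m}$); conditions (i)--(iii) alone only force $v_\fr{p}(\fr{a})=v_\fr{p}(\fr{b})$ at $\fr{p}\mid\fr{m}_f$, not that these vanish, so the phrase ``we may assume both are coprime to $\fr{m}_f$'' should be justified by this hypothesis rather than presented as a deduction.
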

\begin{proof}
See Proposition 1.6 in \cite[Chapter~V]{milne}.
\end{proof}

\begin{lemma}
\label{dirichlet}
Let $a,c\in\clo_K$ be nonzero with $((a),(c))=1$. Then, there exist infinitely many totally positive elements $p\in\clo_K$ in the set \mbox{$\{a+nc\mid n\in\clo_K\}$} with distinct $(p)$ such that $(p)$ is a prime of degree $1$.
\end{lemma}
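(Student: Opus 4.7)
The plan is to invoke the Chebotarev density theorem for the narrow ray class field of $K$ with modulus $\fr{m} := (c)\cdot\fr{m}_\infty$, where $\fr{m}_\infty$ is the product of all real places of $K$. In the ray class group $C_\fr{m}$, the primes lying in any given class have natural density $1/|C_\fr{m}|$ by Chebotarev, so intersecting with the density-$1$ set of degree-$1$ primes supplied by \Cref{deg1} yields infinitely many degree-$1$ primes in each class. The right class to choose is the one represented by a principal ideal $(p_0)$ whose generator $p_0$ is totally positive and congruent to $a$ modulo $(c)$.

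To produce such a $p_0$, I would use that $\clo_K$ embeds as a full-rank lattice in $K\otimes_\rat\real$, so the affine translate $a+c\,\clo_K$ of the full-rank sublattice $c\,\clo_K$ meets every open cone of full dimension; in particular it meets the totally positive cone. Hence there exists $n_0\in\clo_K$ with $p_0:=a+n_0c$ totally positive, and $((p_0),(c))=1$ holds automatically because $((a),(c))=1$.

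Having fixed $p_0$, let $\fr{p}$ be any degree-$1$ prime in the same ray class as $(p_0)$. \Cref{rayclass} supplies nonzero $\alpha,\beta\in\clo_K$ with $\alpha\fr{p}=\beta(p_0)$, $\alpha\equiv\beta\equiv1\pmod{(c)}$, and $\alpha,\beta$ of matching signs at every real place. Setting $p:=\beta p_0/\alpha$, the equality $(p)=\fr{p}$ of fractional ideals forces $p\in\clo_K$; the sign condition gives $\sigma(\beta)/\sigma(\alpha)>0$ at every real $\sigma$, and combined with the total positivity of $p_0$ this makes $p$ totally positive. Reducing $\alpha p=\beta p_0$ modulo $(c)$, where $\alpha$ is a unit in $\clo_K/(c)$ since $\alpha\equiv1$, yields $p\equiv p_0\equiv a\pmod{(c)}$, so $p=a+nc$ for some $n\in\clo_K$. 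Distinct primes $\fr{p}$ give distinct principal ideals $(p)$, producing the required infinite family.

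The most delicate point will be the existence of the totally positive representative $p_0$, which relies on the lattice structure of $\clo_K$ in $K\otimes_\rat\real$ and the openness of the totally positive cone; the remainder is a routine combination of Chebotarev's theorem with the ray-class-group dictionary of \Cref{rayclass}.
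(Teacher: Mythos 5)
Your proof is correct and follows essentially the same route as the paper: Chebotarev applied to the ray class field of modulus $(c)\cdot\fr{m}_\infty$, combined with the ray-class dictionary of Lemma~\ref{rayclass} and the density-$1$ set of degree-$1$ primes from Lemma~\ref{deg1}. Your lattice-cone argument for producing the totally positive representative $p_0$ (the paper instead shifts $a$ by $d\,|\N(c)|$, using $c\mid\N(c)$) and your direct definition $p:=\beta p_0/\alpha$ (the paper first picks an arbitrary totally positive generator of $\fr{p}$ and then adjusts it by a unit) are only presentational variants of the same steps.
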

\begin{proof}
Note that $c|\N(c)$. Let $d\in\inte^+$ and $n=d\cdot\frac{|\N(c)|}{c}\in\clo_K$ so that $a+nc=a+d|\N(c)|$. By choosing $d$ large enough, we may replace $a$ by $a+d|\N(c)|$ to assume that $a$ is totally positive.

Let $\fr{m}=\fr{m}_f\fr{m}_\infty$ be a modulus of $K$ such that $\fr{m}_f=(c)$ and $\fr{m}_\infty$ is the product of all real places of $K$. Let $C_\fr{m}$ be the ray class group modulo~$\fr{m}$, let $L$ be the class field of $C_\fr{m}$, and let $\vphi:C_\fr{m}\rightarrow\Gal(L/K)$ be the Artin isomorphism. By Chebotarev's density theorem, the set of prime ideals $\fr{p}$ of $K$ satisfying that $\vphi([\fr{p}]_\fr{m})=\vphi([(a)]_\fr{m})$ (and hence $[\fr{p}]_\fr{m}=[(a)]_\fr{m}$) has positive density, where $[\fr{a}]_\fr{m}$ denotes the equivalence class of $\fr{a}$ in~$C_\fr{m}$. Since the set of prime ideals of degree $1$ in $K$ has density $1$ by \Cref{deg1}, this implies that there exist infinitely many prime ideals $\fr{p}$ of $K$ of degree $1$ such that $[\fr{p}]_\fr{m}=[(a)]_\fr{m}$. 

Let $\fr{p}$ be a prime ideal of $K$ of degree $1$ with $[\fr{p}]_\fr{m}=[(a)]_\fr{m}$. By \Cref{rayclass}, there exist $\alpha,\beta\in\clo_K$ such that (i)~$\alpha(a)=\beta\fr{p}$, (ii) $\alpha\equiv\beta\equiv1\pmod{c}$, and (iii) $\alpha/\beta$ is totally positive. Since $\fr{p}$ and $(a)$ lie in the same equivalence class in $C_\fr{m}$, they also lie in the same equivalence class in $\Cl^+(K)$ under the surjection $C_\fr{m}\twoheadrightarrow C_{\fr{m}_\infty}=\Cl^+(K)$, so $\fr{p}=(p)$ for some totally positive element $p\in\clo_K$. By~(i), there exist some $\eps\in\clo_K^\times$ such that $\alpha a=\eps\beta p$. Note that $\eps p=(\alpha/\beta)a$ is totally positive by (iii) and the assumption that $a$ is totally positive, so we may replace $p$ by $\eps p$ so that $\alpha a=\beta p$. By (ii), there exist $n_\alpha,n_\beta\in\clo_K$ such that $\alpha=1+n_\alpha c$ and $\beta=1+n_\beta c$. Rewriting $\alpha a=\beta p$ then gives that $a+(n_\alpha a-n_\beta p)c=p$. The result then follows.
\end{proof}

\bibliographystyle{amsalpha}
\bibliography{ref}

\newcommand{\etalchar}[1]{$^{#1}$}
\providecommand{\bysame}{\leavevmode\hbox to3em{\hrulefill}\thinspace}
\providecommand{\MR}{\relax\ifhmode\unskip\space\fi MR }
\providecommand{\MRhref}[2]{%
  \href{http://www.ams.org/mathscinet-getitem?mr=#1}{#2}
}
\providecommand{\href}[2]{#2}
\begin{thebibliography}{BBB{\etalchar{+}}18}

\bibitem[BBB{\etalchar{+}}18]{conjhecke}
Sandro Bettin, Jonathan~W. Bober, Andrew~R. Booker, Brian Conrey, Min Lee, Giuseppe Molteni, Thomas Oliver, David~J. Platt, and Raphael~S. Steiner, \emph{A conjectural extension of {H}ecke's converse theorem}, Ramanujan J. \textbf{47} (2018), no.~3, 659--684.

\bibitem[BK11]{bookerkrish-strengthen}
Andrew~R. Booker and M.~Krishnamurthy, \emph{A strengthening of the {${\rm GL}(2)$} converse theorem}, Compos. Math. \textbf{147} (2011), no.~3, 669--715.

\bibitem[BK13]{bookerkrish-refine}
\bysame, \emph{Further refinements of the {GL}(2) converse theorem}, Bull. Lond. Math. Soc. \textbf{45} (2013), no.~5, 987--1003.

\bibitem[BK16]{bookerkrish-gln}
\bysame, \emph{A converse theorem for {${\rm GL}(n)$}}, Adv. Math. \textbf{296} (2016), 154--180.

\bibitem[BMS67]{dedearith}
H.~Bass, J.~Milnor, and J.-P. Serre, \emph{Solution of the congruence subgroup problem for {${\rm SL}\sb{n}\,(n\geq 3)$} and {${\rm Sp}\sb{2n}\,(n\geq 2)$}}, Inst. Hautes \'{E}tudes Sci. Publ. Math. (1967), no.~33, 59--137.

\bibitem[Bru08]{123}
Jan~Hendrik Bruinier, \emph{Hilbert modular forms and their applications}, The 1-2-3 of modular forms, Universitext, Springer, Berlin, 2008, pp.~105--179.

\bibitem[Bum97]{bump}
Daniel Bump, \emph{Automorphic forms and representations}, Cambridge Studies in Advanced Mathematics, vol.~55, Cambridge University Press, Cambridge, 1997.

\bibitem[CF95]{conreyfarmer}
J.~B. Conrey and D.~W. Farmer, \emph{An extension of {H}ecke's converse theorem}, Internat. Math. Res. Notices (1995), no.~9, 445--463.

\bibitem[Cog14]{cogdell-ps-work}
J.~W. Cogdell, \emph{Piatetski-{S}hapiro's work on converse theorems}, Automorphic forms and related geometry: assessing the legacy of {I}. {I}. {P}iatetski-{S}hapiro, Contemp. Math., vol. 614, Amer. Math. Soc., Providence, RI, 2014, pp.~31--51. \MR{3220922}

\bibitem[CPS94]{cogdellpsi}
J.~W. Cogdell and I.~I. Piatetski-Shapiro, \emph{Converse theorems for {${\rm GL}_n$}}, Inst. Hautes \'{E}tudes Sci. Publ. Math. (1994), no.~79, 157--214.

\bibitem[DN70]{doinaganuma}
Koji Doi and Hidehisa Naganuma, \emph{On the functional equation of certain {D}irichlet series}, Invent. Math. \textbf{9} (1969/70), 1--14.

\bibitem[DS05]{diamond-shurman}
Fred Diamond and Jerry Shurman, \emph{A first course in modular forms}, Graduate Texts in Mathematics, vol. 228, Springer-Verlag, New York, 2005.

\bibitem[Hec36]{hecke}
E.~Hecke, \emph{\"{U}ber die {B}estimmung {D}irichletscher {R}eihen durch ihre {F}unktionalgleichung}, Math. Ann. \textbf{112} (1936), no.~1, 664--699.

\bibitem[JL70]{jacquetlanglands}
H.~Jacquet and R.~P. Langlands, \emph{Automorphic forms on {${\rm GL}(2)$}}, Lecture Notes in Mathematics, vol. Vol. 114, Springer-Verlag, Berlin-New York, 1970.

\bibitem[Lie81]{liehl}
Bernhard Liehl, \emph{On the group {${\rm SL}\sb{2}$} over orders of arithmetic type}, J. Reine Angew. Math. \textbf{323} (1981), 153--171.

\bibitem[Mil20]{milne}
J.~S. Milne, \emph{Class field theory (v4.03)}, 2020, Available at \url{www.jmilne.org/math/}, pp.~287+viii.

\bibitem[Nag73]{naganuma}
Hidehisa Naganuma, \emph{On the coincidence of two {D}irichlet series associated with cusp forms of {H}ecke's ``{N}eben''-type and {H}ilbert modular forms over a real quadratic field}, J. Math. Soc. Japan \textbf{25} (1973), 547--555.

\bibitem[Neu99]{neukirch}
J\"{u}rgen Neukirch, \emph{Algebraic number theory}, Grundlehren der mathematischen Wissenschaften [Fundamental Principles of Mathematical Sciences], vol. 322, Springer-Verlag, Berlin, 1999, Translated from the 1992 German original and with a note by Norbert Schappacher, With a foreword by G. Harder.

\bibitem[PS75]{p-s}
I.~I. Piatetski-Shapiro, \emph{On the {W}eil-{J}acquet-{L}anglands theorem}, Lie groups and their representations ({P}roc. {S}ummer {S}chool, {B}olyai {J}\'{a}nos {M}ath. {S}oc., {B}udapest, 1971), Halsted Press, New York-Toronto, Ont., 1975, pp.~583--595.

\bibitem[Shi78]{shimurahmf}
Goro Shimura, \emph{The special values of the zeta functions associated with {H}ilbert modular forms}, Duke Math. J. \textbf{45} (1978), no.~3, 637--679.

\bibitem[{Sta}24]{stacks-project}
The {Stacks project authors}, \emph{The stacks project}, \url{https://stacks.math.columbia.edu}, 2024.

\bibitem[Vas72]{vaserstein}
L.~N. Vaserstein, \emph{The group {$SL\sb{2}$} over {D}edekind rings of arithmetic type}, Mat. Sb. (N.S.) \textbf{89(131)} (1972), 313--322, 351.

\bibitem[Wei67]{weil}
Andr\'{e} Weil, \emph{\"{U}ber die {B}estimmung {D}irichletscher {R}eihen durch {F}unktionalgleichungen}, Math. Ann. \textbf{168} (1967), 149--156.

\bibitem[Zie06]{ziegler}
Volker Ziegler, \emph{On the distribution of the order of number field elements modulo prime ideals}, Unif. Distrib. Theory \textbf{1} (2006), no.~1, 65--85.

\end{thebibliography}

\end{document}